\begin{document}

\title{Mod $\lowercase{p}$ Iwasawa algebras of pro-$\lowercase{p}$ Iwahori subgroups}

\author[Ariaz]{Rudy Ariaz}
\address{McGill University Department of Mathematics and Statistics, Burnside Hall, 805 Sherbrooke Street West, Montreal, Quebec H3A 0B9 Canada}
\email{rudy.ariaz@mail.mcgill.ca}

\author[Creech]{Steven Creech}
\address{Brown University Department of Mathematics, Kassar House, 151 Thayer Street, Providence, RI 02912 USA}
\email{steven\_creech@brown.edu}

\author[Hu]{Bryan Hu}
\address{UC San Diego Department of Mathematics, 9500 Gilman Drive, La Jolla, CA 92093-0112 USA}
\email{brhu@ucsd.edu}

\author[Khunger]{Simran Khunger}
\address{University of Michigan Department of Mathematics, 2074 East Hall, 530 Church Street, Ann Arbor, MI 48109-1043 USA}
\email{skhunger@umich.edu}

\author[Kozio\l]{Karol Kozio\l}
\address{Department of Mathematics, CUNY Baruch College, 55 Lexington Ave, New York, NY 10010 USA} 
\email{karol.koziol@baruch.cuny.edu}

\author[Rankothge]{Bharatha Rankothge}
\address{UC San Diego Department of Mathematics, 9500 Gilman Drive, La Jolla, CA 92093-0112 USA}
\email{brankoth@ucsd.edu}

\author[Zhang]{Bobby Zixuan Zhang}
\address{Duke University Department of Mathematics, 120 Science Drive, 117 Physics Building, Durham, NC 27708-0320 USA}
\email{bobby.zhang.math@duke.edu}

\date{}

\subjclass[2010]{16S34 (primary), 11F85, 22E20, 22E35 (secondary)}

\begin{abstract}
    Suppose $F$ is a finite unramified extension of $\mathbb{Q}_p$, and $G$ is the group of $F$-points of a split, connected, reductive group over $F$.  Under a natural restriction on $p$, we determine the structure of the graded mod $p$ Iwasawa algebra $\textrm{gr}_{\mathfrak{m}}(\mathbb{F}_p [\![ I]\!])$, where $I$ is a pro-$p$ Iwahori subgroup of $G$.  We also determine its maximal commutative quotient, and relate these results to Gelfand--Kirillov dimensions of smooth mod $p$ representations of $G$.
\end{abstract}

\maketitle 
\tableofcontents 

\section{Introduction}

Suppose $p$ is a prime number and $F$ is a finite extension of $\bbQ_p$.  The mod $p$ Local Langlands program aims to develop a precise connection between mod $p$ representations (i.e., with $\bbF_p$ coefficients) $\rho$ of $\textnormal{Gal}(\overline{F}/F)$, the absolute Galois group of $F$, and mod $p$ representations $\pi$ of reductive groups over $F$.  To date, such a proposal has been carried out most thoroughly for the group $\textnormal{GL}_2$ over $\bbQ_p$: in this setting, there exists a correspondence between two-dimensional mod $p$ representations of $\textnormal{Gal}(\overline{\bbQ}_p/\bbQ_p)$, and certain smooth mod $p$ representations of $\textnormal{GL}_2(\bbQ_p)$, thanks to the work of Breuil, Berger, Colmez, Kisin, Pa\v{s}k\={u}nas and others (see \cite{berger:bourbaki}, \cite{breuil:bourbaki}, \cite{breuil:ICM} for an overview).

Unfortunately, the situation grows infinitely more complicated beyond the group $\textnormal{GL}_2(\bbQ_p)$.  To give just one illustration, work of Breuil--Pa\v{s}k\={u}nas \cite{BP} shows that for a proper unramified extension $F$ of $\bbQ_p$ and a ``generic'' irreducible representation $\rho$ of $\textnormal{Gal}(\overline{F}/F)$, there is an infinite family of smooth representations of $\textnormal{GL}_2(F)$ associated to $\rho$.  (For further examples of pathologies that occur, see \cite{schraen:finpres}, \cite{wu:finpres}, \cite{GLS}.)  Thus, the $\textnormal{GL}_2(F)$ side of a hypothetical correspondence is much more unruly, and there is no hope for a naive bijection as in the $F = \bbQ_p$ case.

Despite these obstacles, the framework of local-global compatibility has proven to be a fruitful approach for attacking these questions.  Slightly more precisely, we fix a generic Galois representation $\rho$ of $\textnormal{Gal}(\overline{F}/F)$ with $F$ unramified over $\bbQ_p$, and consider the first mod $p$ cohomology of a tower of Shimura curves.  Such a tower has commuting actions of $\textnormal{Gal}(\overline{F}/F)$ and $\textnormal{GL}_2(F)$, and therefore the $r$-isotypic component gives a smooth representation $\pi$ of $\textnormal{GL}_2(F)$.  (Here $r$ is an appropriate globalization of $\rho$.)  The representation $\pi$ is contained in the infinite family constructed by Breuil--Pa\v{s}k\={u}nas (at least if the level is chosen ``minimally;'' see \cite{EGS}), and therefore is a viable candidate to participate in a Local Langlands correspondence.  Moreover, many structural properties of the representation $\pi$ have now been proven: the invariants under a pro-$p$ Iwahori subgroup (\cite{EGS}), the $\textnormal{GL}_2(k_F)$-representation given by $\pi^{1 + p\textnormal{M}_2(\cO_F)}$ (\cite{LMS}, \cite{HuWang},  \cite{Le:wild}), the ``diagram'' associated to $\pi$ (\cite{DottoLe}), and, most importantly for the present article, the \textbf{Gelfand--Kirillov dimension} of $\pi$ (\cite{BHHMS}, \cite{HuWang2}).

In order to obtain bounds on the Gelfand--Kirillov dimension of $\pi$ (a measure of size for infinite-dimensional representations), the authors of \cite{BHHMS} utilize the structure of \textbf{Iwasawa algebras} associated to pro-$p$ Iwahori subgroups.  The purpose of the present article is to extend their analysis to arbitrary split reductive groups over unramified extensions.  More precisely, we assume now that $F$ is unramified over $\bbQ_p$, and $G$ is the group of $F$-points of a split connected reductive group $\bG$.  We assume for simplicity that the root system of $\bG$ is irreducible, and that $p - 1$ is larger than the Coxeter number of $\bG$.  We fix a Borel subgroup $\bB = \bT\bU$ in $\bG$, and let $I$ denote the associated pro-$p$ Iwahori subgroup of $G$.  (In the case of $\textnormal{GL}_2(\bbQ_p)$, one can take $I$ to be the subgroup of matrices with $\bbZ_p$ entries which are upper-triangular and unipotent modulo $p$.)  The Iwasawa algebra of $I$ is defined as
$$\bbF_p \llbracket I \rrbracket := \varprojlim_{\cN \vartriangleleft_o I}\bbF_p[I/\cN],$$
where $\cN$ ranges over the open normal subgroups of $I$.  This is a (in general non-commutative) profinite local ring.  Despite the straightforward definition, working with the above presentation is a bit unwieldy.  In order to gain more insight into the algebra $\bbF_p \llbracket I \rrbracket$, we equip $I$ with a \textbf{$p$-valuation} $\omega$ in the sense of Lazard \cite{lazard}.  This $p$-valuation  induces a filtration on $I$, and the associated graded group $\gr(I)$ has the structure of a graded Lie algebra over a one-variable polynomial algebra $\bbF_p[P]$.  Our first main result is the following (see the body of the article for precise statements):

\begin{thm}[Proposition \ref{prop:overline-gr(I)-pres}]
Let $\Phi = \Phi^+ \sqcup \Phi^-$ denote the root system of $\bG$ associated to a split maximal torus $\bT$, and for each $\alpha \in \Phi$, let $u_\alpha:F \longrightarrow G$ denote the associated root morphism.  Fix also a generator $\xi$ for $k_F$ over $\bbF_p$.  Then the Lie algebra $\gr(I)\otimes_{\bbF_p[P]}\bbF_p$ has a basis consisting of (images of) elements of the form:
\begin{enumerate}
\item[(a)] $u_\alpha([\xi]^r)$ for $\alpha \in \Phi^+$ and $0 \leq r \leq [k_F:\bbF_p] - 1$,
\item[(b)] $u_\beta(p[\xi]^r)$ for $\beta \in \Phi^-$ and $0 \leq r \leq [k_F:\bbF_p] - 1$,
\item[(c)] $\lambda(1 + p[\xi]^r)$ for $\lambda$ ranging over a basis of $X_*(\bT)$ and $0 \leq r \leq [k_F:\bbF_p] - 1$.
\end{enumerate}
We have an explicit description of the Lie bracket in terms of this basis.  In particular, the Lie subalgebra generated by the elements in point (a) is isomorphic to the Lie algebra of $\textnormal{Res}_{k_F/\bbF_p}(\bU_{k_F})$, while the Lie subalgebra generated by the elements in point (b) is abelian.  
\end{thm}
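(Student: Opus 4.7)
My plan is to combine the Iwahori-type factorization of $I$ with the explicit Lazard $p$-valuation on $I$, then read off the Lie bracket from Chevalley commutator formulas.

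First, since $F$ is unramified and $\bG$ is split, $I$ admits the topological product decomposition
$$I = \left(\prod_{\beta \in \Phi^-} u_\beta(p\cO_F)\right) \cdot \left(\prod_{\lambda} \lambda(1 + p\cO_F)\right) \cdot \left(\prod_{\alpha \in \Phi^+} u_\alpha(\cO_F)\right),$$
with $\lambda$ ranging over a $\bbZ$-basis of $X_*(\bT)$. Expanding each factor $\cO_F$ or $1 + p\cO_F$ using the Teichm\"uller lifts $[\xi]^r$, $0 \leq r \leq [k_F:\bbF_p]-1$, as a $\bbZ_p$-basis yields exactly the generating set listed in (a)--(c). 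With the standard $p$-valuation on $I$ (as used in \cite{BHHMS}), a direct check shows these generators lie in the lowest positive grade of the filtration and project to an $\bbF_p$-basis of the corresponding slice. Applying Lazard's ordered basis criterion \cite{lazard} then shows that they form an ordered basis of $(I, \omega)$, so $\gr(I)$ is a free $\bbF_p[P]$-module on their images, and reducing modulo $P$ yields the claimed basis of $\gr(I) \otimes_{\bbF_p[P]} \bbF_p$.

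Next, the Lie bracket is obtained by expanding the group commutator of two basis elements and projecting onto the minimal graded piece. For $u_\alpha(x)$ and $u_{\alpha'}(y)$ attached to linearly independent roots, the Chevalley commutator formula
$$[u_\alpha(x), u_{\alpha'}(y)] = \prod_{i, j > 0,\, i\alpha + j\alpha' \in \Phi} u_{i\alpha + j\alpha'}\bigl(c_{i,j}^{\alpha,\alpha'} x^i y^j\bigr)$$
shows that only the $i = j = 1$ term contributes in minimal $\omega$-degree. For two positive roots, this recovers the usual Chevalley bracket on $\textnormal{Lie}(\bU)$ across each $r$-slot, and assembling over $r$ gives $\textnormal{Lie}(\textnormal{Res}_{k_F/\bbF_p}(\bU_{k_F}))$. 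For two negative roots, the extra factor of $p^{i+j}$ (with $i + j \geq 2$) pushes every commutator term into a strictly higher filtration step, so its image in $\gr(I) \otimes_{\bbF_p[P]} \bbF_p$ vanishes, giving the abelian claim.

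The remaining brackets require more care: mixed positive/negative root brackets (where $\alpha' = -\alpha$ forces a torus contribution via the Chevalley relation involving $\alpha^\vee$), root/torus brackets controlled by $\lambda(1+p[\xi]^r)\, u_\alpha(x)\, \lambda(1+p[\xi]^r)^{-1} = u_\alpha\bigl((1+p[\xi]^r)^{\langle \alpha, \lambda\rangle} x\bigr)$, and torus/torus brackets (which vanish since $\bT(\cO_F)$ is commutative). The main obstacle is the bookkeeping involved in tracking $\omega$-valuations of each output factor to decide which terms survive modulo $P$. The hypothesis that $p - 1$ exceeds the Coxeter number of $\bG$ is essential here: it guarantees that the Chevalley structure constants $c_{i,j}^{\alpha,\alpha'}$ are units modulo $p$, and that the binomial coefficients arising from expanding $(1 + p[\xi]^r)^{\langle\alpha,\lambda\rangle}$ behave as expected, so that each remaining bracket can be written in closed form on the stated basis.
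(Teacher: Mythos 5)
Your overall strategy matches the paper's: Iwahori factorization, Lazard $p$-valuation $\omega$, Chevalley commutator formulas, reduction modulo $P$. Two points in the execution need to be corrected, however.

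The claim that the proposed generators ``lie in the lowest positive grade of the filtration'' is false: their $\omega$-values range over all of $\{1/h, 2/h, \dots, 1\}$ --- explicitly $\omega(u_\alpha([\xi]^r)) = \textnormal{ht}(\alpha)/h$ for $\alpha \in \Phi^+$, $\omega(u_\beta(p[\xi]^r)) = 1 + \textnormal{ht}(\beta)/h$ for $\beta \in \Phi^-$, and $\omega(\lambda(1 + p[\xi]^r)) = 1$. What must actually be verified is that for each $\nu \in (0,1]$ the proposed elements with $\omega=\nu$ give an $\bbF_p$-basis of $\gr_\nu(I)$, and that $P\colon \gr_\nu(I)\to\gr_{\nu+1}(I)$ is an isomorphism for $\nu\geq 1/h$; only then does tensoring with $\bbF_p$ over $\bbF_p[P]$ collapse $\gr(I)$ to $\bigoplus_{\nu\in(0,1]}\gr_\nu(I)$ with the stated basis. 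Invoking ``Lazard's ordered basis criterion'' without this verification elides the bulk of the argument, which is precisely the content of the explicit computation of $I_\nu$, $I_{\nu^+}$, and the $P$-operator in the paper's Sections 3.1--3.3.

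Your justification for the abelian property of the negative part is also off. For $\alpha,\beta\in\Phi^-$ with $\alpha+\beta\in\Phi^-$, the bracket $[\gr(u_\alpha(px_\alpha)),\gr(u_\beta(px_\beta))]$ is generically \emph{nonzero} in $\gr(I)$: the $i=j=1$ term $\gr(u_{\alpha+\beta}(c_{\alpha,\beta;1,1}p^2 x_\alpha x_\beta))$ sits exactly at the sum of the input degrees, $2 + \textnormal{ht}(\alpha+\beta)/h > 1$, not strictly above it, so ``pushed into a strictly higher filtration step'' is not the right explanation. The correct reasoning is that $u_{\alpha+\beta}(p^2 z) = u_{\alpha+\beta}(pz)^p$, so the bracket equals $P$ applied to the basis element $\gr(u_{\alpha+\beta}(pz))$ and hence vanishes in $\gr(I)\otimes_{\bbF_p[P]}\bbF_p$; equivalently, $\overline{\gr(I)}$ is concentrated in degrees $\leq 1$. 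The ``remaining brackets'' you defer are exactly the content of the paper's Propositions 3.2--3.4, and the bookkeeping you identify as an obstacle is the substance of the proof, not a technicality to be waved past.
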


The $p$-valuation $\omega$ also induces a filtration on the Iwasawa algebra $\bbF_p\llbracket I\rrbracket$, and results of Lazard then show that we have an isomorphism of associative algebras
\begin{equation}
\label{eqn:intro}
U_{\bbF_p}(\gr(I)\otimes_{\bbF_p[P]}\bbF_p)  \stackrel{\sim}{\longrightarrow} \gr_{\omega}(\bbF_p\llbracket I \rrbracket),
\end{equation}
where the right-hand side denotes the associated graded algebra.  On the other hand, denoting by $\fm$ the unique maximal ideal of $\bbF_p\llbracket I\rrbracket$, we can also consider $\gr_{\fm}(\bbF_p\llbracket I\rrbracket)$, the associated graded algebra for the $\fm$-adic filtration.  Our second main result relates these two filtrations:

\begin{thm}[Proposition \ref{prop: ideal-comparison}]
\label{thm:intro2}
Suppose $\bG$ is semisimple.  Then, after rescaling indices, we have an isomorphism of graded algebras
$$\gr_{\omega}(\bbF_p\llbracket I\rrbracket) \cong \gr_{\fm}(\bbF_p\llbracket I\rrbracket).$$
\end{thm}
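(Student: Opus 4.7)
The plan is to show that the $\omega$-filtration and the $\fm$-adic filtration on $\bbF_p\llbracket I\rrbracket$ coincide after rescaling the filtration indices by a single constant; the claimed isomorphism of graded algebras then follows by passing to associated gradeds. Fix an ordered topological generating set $h_1, \ldots, h_d$ of $I$ whose images form the basis of $\gr(I) \otimes_{\bbF_p[P]} \bbF_p$ furnished by the previous theorem, and set $v_i := \omega(h_i)$. The crucial structural input is that, when $\bG$ is semisimple, every $v_i$ takes the same value $v_0$. In this case one may choose the basis of $X_*(\bT)$ in part (c) to consist of coroots, and a direct computation with the Iwahori filtration on which $\omega$ is built shows that $\omega(u_\alpha([\xi]^r))$, $\omega(u_\beta(p[\xi]^r))$, and $\omega(\lambda(1+p[\xi]^r))$ all lie at the same depth.

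Setting $b_i := h_i - 1 \in \fm$, the Lazard-style PBW theorem underlying \eqref{eqn:intro} provides a convergent ordered-monomial expansion in $\bbF_p\llbracket I\rrbracket$ with
\[
\omega\!\left( \prod_i b_i^{\alpha_i} \right) \;=\; \sum_i \alpha_i v_i \;=\; v_0 \sum_i \alpha_i.
\]
Hence $F^{\geq kv_0}_\omega \bbF_p\llbracket I\rrbracket$ is the closed $\bbF_p$-span of ordered monomials of total PBW-degree at least $k$. To compare with the $\fm$-adic filtration, observe that $\fm = F^{>0}_\omega = F^{\geq v_0}_\omega$, since the augmentation ideal is precisely the first nonzero filtration step. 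Multiplicativity of the $\omega$-filtration gives $\fm^k \subseteq F^{\geq kv_0}_\omega$, while conversely each ordered monomial of PBW-degree $\geq k$ is a literal $k$-fold product of elements of $\fm$, yielding $F^{\geq kv_0}_\omega \subseteq \fm^k$. Rescaling the $\omega$-grading by the factor $1/v_0$ then identifies the two associated graded algebras as required.

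The principal difficulty is the uniformity claim $v_1 = \cdots = v_d = v_0$. This requires choosing $\omega$ carefully in line with the Iwahori filtration, and semisimplicity is essential: in the general reductive case the extra cocharacters of a central torus can contribute generators of a different $\omega$-depth, making the $\omega$-filtration genuinely finer than any rescaling of the $\fm$-adic filtration. Once the uniformity is in hand, the remainder of the argument is a formal PBW-plus-multiplicativity comparison requiring no further input beyond Lazard's structure theorem.
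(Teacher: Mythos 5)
Your proof proposal rests on a false premise that breaks both the heuristic plan and the formal execution. You claim that when $\mathbf{G}$ is semisimple ``every $v_i$ takes the same value $v_0$,'' i.e.\ all basis generators of $I$ have equal $\omega$-value. This is not true, even for $\mathrm{SL}_2$: the unipotent generators $u_\alpha([\xi]^r)$, $u_{-\alpha}(p[\xi]^r)$ have $\omega$-value $1/h = 1/2$, while the coroot generators $\alpha^\vee(1+p[\xi]^r)$ have $\omega$-value $1$. In general the generators occupy every depth $1/h, 2/h, \ldots, (h-1)/h, 1$ in the $\omega$-filtration, with $\omega(u_\gamma(p^{\delta_\gamma}z))$ depending on the height of $\gamma$ modulo $h$. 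So the simplification $\sum_i \alpha_i v_i = v_0 \sum_i \alpha_i$, and with it the identification of $\omega$-degree with PBW total degree, is incorrect.

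This false premise hides exactly the substantial technical content. The ``easy'' containment $\fm^{hk} \subseteq \{x : \overline{w}(x) \geq k\}$ is indeed a formal consequence of $\fm = \{x : \overline{w}(x) \geq 1/h\}$ and multiplicativity, and your proposal gets that direction right. But for the reverse containment, a Lazard ordered monomial $\prod_i (x_i - 1)^{\alpha_i}$ of weighted $\omega$-degree $\sum_i \omega(x_i)\alpha_i \geq k/h$ need \emph{not} have total degree $\sum_i \alpha_i \geq k$, so it is not ``a literal $k$-fold product of elements of $\fm$.'' For instance, the single factor $\alpha^\vee(1+p[\xi]^r)-1$ has $\omega$-degree $1 = h/h$ but total degree $1$, and one needs to show directly that it lies in $\fm^h$. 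The real work of the proof is precisely establishing that each generator satisfies $x_i - 1 \in \fm^{h\cdot\omega(x_i)}$ --- i.e.\ that $u_\gamma(p^{\delta_\gamma}z) - 1 \in \fm^{k}$ when $\mathrm{ht}(\gamma) \equiv k \pmod{h}$, and $\alpha^\vee(1+p[\xi]^r) - 1 \in \fm^{h}$ --- and this requires nested inductions on height using the Chevalley commutator formula, together with an explicit $\mathrm{SL}_2$ matrix factorization for the coroot case, plus the arithmetic of the assumption $p > h+1$. None of this can be replaced by ``a formal PBW-plus-multiplicativity comparison.'' Finally, your remark about the role of semisimplicity is also off: it is not that a central torus would introduce generators of a different $\omega$-depth (they would have $\omega$-depth $1$, same as the coroots), but rather that such generators $\lambda(1+p[\xi]^r)-1$ lie in $\fm\smallsetminus\fm^2$ and hence cannot lie in $\fm^h$, so the comparison fails at the central factor.
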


\noindent In fact, this result also holds in the reductive case, assuming we take quotients by the center of $I$ on both sides.

Combining the above two theorems with equation \eqref{eqn:intro}, we obtain an explicit presentation of the graded algebra $\gr_{\fm}(\bbF_p\llbracket I\rrbracket)$.  This allows us to compute a minimal generating set for $\gr_{\fm}(\bbF_p\llbracket I\rrbracket)$ (Proposition \ref{prop: Minimal Generating Set For Universal Enveloping Algebra}), as well as its maximal commutative quotient (Proposition \ref{prop:largest-comm-quot}).

In the final section, we return to our goal of understanding Gelfand--Kirillov dimension.  We obtain a result which stands in contrast to that of \cite{BHHMS}: if $\pi$ is a smooth mod $p$ representation of $G$ with central character for which the action of $\gr_{\fm}(\bbF_p\llbracket I\rrbracket)$ on $\gr_{\fm}(\pi^\vee)$ factors through the largest commutative quotient of $\gr_{\fm}(\bbF_p\llbracket I\rrbracket)$, then the bound on Gelfand--Kirillov dimension we obtain is ``too small'' for most groups $\bG$ (namely, it contradicts global expectations coming from completed cohomology).  Therefore our results suggest that for those $\pi$ which come from cohomology of locally symmetric spaces, we will necessarily have to work with non-commutative quotients of $\gr_{\fm}(\bbF_p\llbracket I\rrbracket)$.

\subsection*{Related work}
While this article was being prepared, Dotto--Le Hung posted the preprint \cite{DottoLeHung} on the arXiv.  Therein, they obtain results similar to ours on the structure of $\gr(I)\otimes_{\bbF_p[P]}\bbF_p$, with similar restrictions ($\bG$ split, $F/\bbQ_p$ unramified, $p - 1$ larger than the Coxeter number of $\bG$).  We would like to point out that we were not aware of their ongoing work, nor they of ours.  Though there is some overlap in the methods used, there are also several differences.  On the one hand, Dotto--Le Hung obtain a slightly more refined structural result (writing $\gr(I)\otimes_{\bbF_p[P]}\bbF_p$ as a semidirect product), and ultimately use it to compute the cohomology groups $H^*(\bG(\cO_F),\bbF_p)$.  On the other hand, our focus was on Gelfand--Kirillov dimension, and the comparison of Theorem \ref{thm:intro2} does not appear in \cite{DottoLeHung}.

\subsection*{Acknowledgements}
This project began at the Rethinking Number Theory workshop during the summer of 2024, and we would like thank the organizers (Allechar Serrano L\'opez, Heidi Goodson, and Mckenzie West) for the opportunity to participate.  We would also like to thank Nischay Reddy for his involvement in the project during the workshop.  During the preparation of this article, we received support from the following sources: RA was supported by the Fonds de recherche du Québec (FRQ), grants 344463 and 364034; BH was supported by NSF grant 2144021; SK was suppoted by NSF grant DGE-2241144; KK was supported by NSF grant DMS-2310225 and a PSC-CUNY Trad B award.

\section{Notation and preliminaries}

We begin by collecting the basic notations and results we will need.

\subsection{\texorpdfstring{$p$-valuations}{p-valuations}}

We first recall some constructions of Lazard \cite[\S\S II.1, III.2]{lazard}.

Suppose $\cG$ is a compact $p$-adic analytic group (in particular, $\cG$ is profinite).  A function $\omega:\cG \longrightarrow \bbR_{ > 0} \cup \{\infty\}$ is called a \textbf{$p$-valuation} if it satisfies the following conditions for all $g,h \in \cG$ (cf. \cite[Defs. II.1.1.1, II.1.2.10, III.2.1.2]{lazard}):
\begin{itemize}
\item $\omega(g) > \frac{1}{p - 1}$,
\item $\omega(g) = \infty$ if and only if $g = 1$,
\item $\omega(h^{-1}g) \geq \min\{\omega(g), \omega(h)\}$,
\item $\omega(ghg^{-1}h^{-1}) \geq \omega(g) + \omega(h)$,
\item $\omega(g^p) = \omega(g) + 1$.
\end{itemize}
By \cite[Prop. III.2.2.6]{lazard}, $\omega(\cG\smallsetminus\{1\})$ is a discrete subset of $\bbR_{>0}$.

Given a real number $\nu\in \bbR_{> 0}$, we define 
$$\cG_\nu := \{g\in \cG: \omega(g) \geq \nu\} \qquad\textnormal{and} \qquad \cG_{\nu^+} := \{g\in \cG: \omega(g) > \nu\}$$
(\cite[\S II.1.1.2]{lazard}).  The groups $\cG_\nu, \cG_{\nu^+}$ are normal in $\cG$, and $\cG_\nu/\cG_{\nu^+}$ is central in $\cG/\cG_{\nu^+}$.  We then define
$$\gr(\cG) := \bigoplus_{\nu \in \bbR_{>0}}\gr_\nu(\cG) := \bigoplus_{\nu \in \bbR_{>0}}\cG_\nu/\cG_{\nu^+}.$$
Given an element $x \in \cG \smallsetminus \{1\}$, we let $\gr(x):= x\cG_{\omega(x)^+}$ denote its image in $\gr_{\omega(x)}(\cG)$.  According to \cite[\S II.1.1.7, III.2.1.2]{lazard}, the group $\gr(\cG)$ has the structure of a graded Lie algebra over $\bbF_p$, where the Lie bracket is induced from the commutator in $\cG$: if $g\in \cG_\nu, h \in \cG_\mu$, we have
$$\left[g\cG_{\nu^+}, h\cG_{\mu^+}\right]:= ghg^{-1}h^{-1}\cG_{(\nu + \mu)^+}.$$

The Lie algebra $\gr(\cG)$ has an additional piece of structure.  Given $g\in \cG_\nu$, we define 
\begin{eqnarray*}
	P: \gr_\nu(\cG) & \longrightarrow & \gr_{\nu + 1}(\cG) \\
	g\cG_{\nu^+} & \longmapsto & g^p\cG_{(\nu + 1)^+}.
\end{eqnarray*}
This map is $\bbF_p$-linear, and assembles to give a linear map $P:\gr(\cG) \longrightarrow \gr(\cG)$ which is homogeneous of degree 1.  Furthermore, the Lie bracket is bilinear for the operator $P$, and therefore $\gr(\cG)$ is a Lie algebra over the ring $\bbF_p[P]$.  (See \cite[\S. III.2.1]{lazard}.)  In the setting where $\cG$ is a compact $p$-adic analytic group, $\gr(\cG)$ is \textit{free} over $\bbF_p[P]$ of rank equal to the dimension of $\cG$ \cite[Prop. III.3.1.3]{lazard}.

\subsection{Iwasawa algebras}

Suppose again that $\cG$ is a compact $p$-adic analytic group, and let us write $\cG = \varprojlim_{\cN \vartriangleleft_o \cG} \cG/\cN$, where $\cN$ ranges over the open normal subgroups of $\cG$.  We define the \textbf{completed group ring} or \textbf{Iwasawa algebra} of $\cG$ over $\bbZ_p$ as
$$\bbZ_p \llbracket \cG \rrbracket := \varprojlim_{\cN \vartriangleleft_o \cG}\bbZ_p[\cG/\cN].$$
We also define its mod $p$ version, $\bbF_p [\![ \cG]\!]$, as the mod $p$ reduction of $\bbZ_p [\![ \cG]\!]$, or, equivalently, as
$$\bbF_p \llbracket \cG \rrbracket := \varprojlim_{\cN \vartriangleleft_o \cG}\bbF_p[\cG/\cN].$$
Under the additional assumption that $\cG$ is pro-$p$, $\bbF_p\llbracket \cG \rrbracket$ is a complete Noetherian local ring whose maximal ideal we denote by $\fm$.  We then denote by $\gr_{\fm}(\bbF_p\llbracket \cG \rrbracket)$ the associated graded ring for the $\fm$-adic filtration:
$$\gr_{\fm}(\bbF_p\llbracket \cG \rrbracket) :=  \bigoplus_{n = 0}^{\infty} \fm^n/\fm^{n + 1}$$

Suppose now $\cG$ is equipped with a $p$-valuation $\omega$.  We let $w: \bbZ_p\llbracket \cG \rrbracket \longrightarrow \bbR_{>0} \cup \{\infty\}$ be the valuation associated to $\omega$ defined in \cite[Def. III.2.3.1.2]{lazard} (note that the ``filtration'' $w$ on $\bbZ_p[\cG]$ defined in \emph{loc. cit.} is in fact a valuation (as defined in \cite[Def. I.2.2.1]{lazard}), and $\bbZ_p\llbracket \cG \rrbracket$ is the completion of $\bbZ_p[\cG]$ for $w$ \cite[Thm. III.2.3.3, Cor. III.2.3.4]{lazard}).  
We let $\textnormal{gr}_{\omega}(\bbZ_p\llbracket \cG \rrbracket)$ denote the graded ring associated to $w$, defined as 
$$\textnormal{gr}_{\omega}(\bbZ_p\llbracket \cG \rrbracket) := \bigoplus_{\nu \in \bbR_{\geq 0}} \textnormal{gr}_\nu(\bbZ_p\llbracket \cG \rrbracket) := \bigoplus_{\nu \in \bbR_{\geq 0}}\bbZ_p\llbracket \cG \rrbracket_{\nu}/\bbZ_p\llbracket \cG \rrbracket_{\nu^+},$$
where 
$$\bbZ_p\llbracket \cG \rrbracket_\nu := \{\xi \in \bbZ_p\llbracket \cG \rrbracket: w(\xi) \geq \nu\}\qquad\textnormal{and}\qquad \bbZ_p\llbracket \cG \rrbracket_{\nu^+} := \{\xi \in\bbZ_p\llbracket \cG \rrbracket: w(\xi) > \nu\}.$$

Analogously to $\gr(\cG)$, the graded ring $\textnormal{gr}_{\omega}(\bbZ_p\llbracket \cG \rrbracket)$ has the structure of a graded algebra over $\bbF_p[P]$, where $P$ acts by 
\begin{eqnarray*}
P: \textnormal{gr}_\nu(\bbZ_p\llbracket \cG \rrbracket) & \longrightarrow & \textnormal{gr}_{\nu + 1}(\bbZ_p\llbracket \cG \rrbracket) \\
\xi + \bbZ_p\llbracket \cG \rrbracket_{\nu^+ } & \longmapsto & p\xi + \bbZ_p\llbracket \cG \rrbracket_{(\nu + 1)^+}.
\end{eqnarray*}
The maps (defined in \cite[II.1.1.9]{lazard})
\begin{eqnarray*}
\textnormal{gr}_\nu(\cG) & \longrightarrow & \textnormal{gr}_\nu(\bbZ_p\llbracket \cG \rrbracket)  \\
g\cG_{\nu^+} & \longmapsto & (g - 1) + \bbZ_p\llbracket \cG \rrbracket_{\nu^+} 
\end{eqnarray*}
assemble to a map $\gr(\cG) \longrightarrow \gr_{\omega}(\bbZ_p\llbracket \cG \rrbracket)$.

We have the following result on the structure of $\gr_{\omega}(\bbZ_p\llbracket \cG \rrbracket)$ (see \cite[Thm. III.2.3.3]{lazard}):

\begin{thm}
The maps above induce an isomorphism of graded $\bbF_p[P]$-modules
$$U_{\bbF_p[P]}(\gr(\cG)) \stackrel{\sim}{\longrightarrow} \gr_{\omega}(\bbZ_p\llbracket \cG \rrbracket),$$
where $U_{\bbF_p[P]}$ denotes the universal enveloping algebra over $\bbF_p[P]$.
\end{thm}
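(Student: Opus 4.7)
The plan is to construct the map directly from the definitions, verify it is a homomorphism of $\bbF_p[P]$-Lie algebras, extend it via the universal property of the enveloping algebra, and then establish bijectivity by matching an ordered PBW basis on the source with a Lazard-style ``coordinates of the second kind'' decomposition on the target.

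First I would verify that the maps $g\cG_{\nu^+} \mapsto (g-1) + \bbZ_p\llbracket \cG\rrbracket_{\nu^+}$ assemble into a homomorphism of graded $\bbF_p[P]$-Lie algebras. The $\bbF_p$-linearity comes from the expansion $(gh - 1) = (g-1) + (h-1) + (g-1)(h-1)$, whose last term lies in strictly higher valuation. For the Lie bracket, the identity
$$ghg^{-1}h^{-1} - 1 = \bigl((g-1)(h-1) - (h-1)(g-1)\bigr)(hg)^{-1},$$
together with the fact that $(hg)^{-1}$ has $w$-valuation $0$ with constant term $1$, shows that the commutator $[g\cG_{\nu^+}, h\cG_{\mu^+}]$ maps to $[(g-1), (h-1)]$ in $\gr_\omega$. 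For the operator $P$, the binomial expansion $g^p - 1 = \sum_{k=1}^p \binom{p}{k}(g-1)^k$, combined with the axiom $\omega(g) > \frac{1}{p-1}$ (which forces $p\,\omega(g) > \omega(g) + 1$ so that the $k \geq 2$ terms are negligible), shows that the dominant term is $p(g-1)$, matching the $P$-action on the target. The universal property of $U_{\bbF_p[P]}(-)$ then promotes this to a map of graded $\bbF_p[P]$-algebras $U_{\bbF_p[P]}(\gr(\cG)) \longrightarrow \gr_\omega(\bbZ_p\llbracket\cG\rrbracket)$.

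To prove bijectivity I would pick an ordered basis $(g_1,\ldots,g_d)$ of $\cG$ adapted to the $p$-valuation, that is, one whose images form an $\bbF_p[P]$-basis of $\gr(\cG)$; such a basis exists by the freeness of $\gr(\cG)$ over $\bbF_p[P]$ quoted from \cite{lazard}. Applying PBW to $U_{\bbF_p[P]}(\gr(\cG))$, the ordered monomials $\prod_i \gr(g_i)^{\alpha_i}$ form an $\bbF_p[P]$-basis of the source.

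The hard part is matching this with an analogous decomposition on the target: one must show that every $\xi \in \bbZ_p\llbracket\cG\rrbracket$ admits a unique convergent expansion
$$\xi \;=\; \sum_{\alpha \in \bbZ_{\geq 0}^d} c_\alpha\,(g_1-1)^{\alpha_1}\cdots(g_d-1)^{\alpha_d}, \qquad c_\alpha \in \bbZ_p,$$
whose valuation is computed termwise as $w(\xi) = \inf_\alpha\bigl(v_p(c_\alpha) + \sum_i \alpha_i\,\omega(g_i)\bigr)$. This is Lazard's theorem on coordinates of the second kind, and is the technical heart of the argument; it relies on translating the group law (in a chart around $1$) into the formal product of the $(g_i - 1)$'s and using the $p$-valuation axioms to bound the cross terms. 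Once this decomposition is in hand, passing to the associated graded shows that the ordered monomials in the $(g_i - 1)$'s project to an $\bbF_p[P]$-basis of $\gr_\omega(\bbZ_p\llbracket\cG\rrbracket)$, and our algebra map identifies this with the PBW basis of the source, proving the isomorphism.
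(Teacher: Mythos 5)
The paper does not actually prove this theorem; it is stated with a citation to Lazard [Thm.\ III.2.3.3] and used as a black box. Your sketch is a faithful reconstruction of Lazard's own argument: constructing the map on $\gr(\cG)$, verifying it is a morphism of graded $\bbF_p[P]$-Lie algebras, invoking the universal property of $U_{\bbF_p[P]}(-)$, and then reducing bijectivity to the existence and termwise-valuation property of the convergent expansion in ordered monomials $(g_1-1)^{\alpha_1}\cdots(g_d-1)^{\alpha_d}$, which is exactly Lazard's ``coordinates of the second kind.'' You have correctly isolated where the real work lies.

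One small imprecision worth flagging: in checking compatibility with the operator $P$, you attribute the negligibility of all terms with $k \geq 2$ in $g^p - 1 = \sum_{k=1}^p \binom{p}{k}(g-1)^k$ to the axiom $\omega(g) > \tfrac{1}{p-1}$. In fact, for $2 \leq k \leq p-1$ the divisibility $p \mid \binom{p}{k}$ already gives the term valuation at least $1 + k\omega(g) > 1 + \omega(g)$, so those terms are negligible unconditionally; it is only the top term $(g-1)^p$ that genuinely requires $p\,\omega(g) > 1 + \omega(g)$, i.e.\ the axiom. This does not affect the correctness of the argument, only its bookkeeping. A routine point you elided but should record is that the map $g\cG_{\nu^+} \mapsto (g-1) + \bbZ_p\llbracket\cG\rrbracket_{\nu^+}$ is well-defined on cosets, which follows from $g - g' = -g\bigl(g^{-1}g' - 1\bigr)$ and the fact that $g$ has valuation $0$.
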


We define $\bbF_p\llbracket \cG \rrbracket_{\nu}$ (resp., $\bbF_p\llbracket \cG \rrbracket_{\nu^+}$) as the image of $\bbZ_p\llbracket \cG \rrbracket_{\nu}$ (resp., $\bbZ_p\llbracket \cG \rrbracket_{\nu^+}$) in $\bbF_p\llbracket \cG \rrbracket$, and define $\gr_{\omega}(\bbF_p\llbracket \cG\rrbracket)$ as the associated graded ring. (When $\cG$ is equal to a pro-$p$ Iwahori subgroup, we will compare the graded rings $\gr_{\fm}(\bbF_p\llbracket \cG\rrbracket)$ and $\gr_{\omega}(\bbF_p\llbracket \cG\rrbracket)$ in Proposition \ref{prop: ideal-comparison}.)  We note that $\gr_\omega(\bbF_p\llbracket \cG\rrbracket)$ may also be defined using the quotient filtration $\overline{w}$ \cite[I.2.1.7, I.2.3.8]{lazard}.  

Sending $P$  to  $0$ in the previous result gives the following.

\begin{cor}\label{lazardcor}
We have an isomorphism of graded $\bbF_p$-algebras
$$U_{\bbF_p}(\gr(\cG)\otimes_{\bbF_p[P]}\bbF_p) = U_{\bbF_p[P]}(\gr(\cG))\otimes_{\bbF_p[P]}\bbF_p \stackrel{\sim}{\longrightarrow} \gr_{\omega}(\bbF_p\llbracket \cG \rrbracket).$$
\end{cor}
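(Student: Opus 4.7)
The plan is to deduce the corollary from the preceding theorem by tensoring the displayed isomorphism over $\bbF_p[P]$ with $\bbF_p = \bbF_p[P]/(P)$; the two sides of the claimed display then need to be analyzed separately.

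For the left-hand equality $U_{\bbF_p[P]}(\gr(\cG))\otimes_{\bbF_p[P]}\bbF_p = U_{\bbF_p}(\gr(\cG)\otimes_{\bbF_p[P]}\bbF_p)$, I would invoke the universal property of the enveloping algebra. Homomorphisms out of either side into an associative $\bbF_p$-algebra $A$ correspond naturally to $\bbF_p$-linear Lie algebra homomorphisms $\gr(\cG) \to A$ that kill the operator $P$, equivalently to Lie algebra maps out of $\gr(\cG)\otimes_{\bbF_p[P]}\bbF_p$ into $A$. Hence both sides corepresent the same functor of $\bbF_p$-algebras, which yields the equality.

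For the isomorphism on the right, it suffices to prove that
$$\gr_\omega(\bbZ_p\llbracket\cG\rrbracket)\otimes_{\bbF_p[P]}\bbF_p \stackrel{\sim}{\longrightarrow} \gr_\omega(\bbF_p\llbracket\cG\rrbracket).$$
The construction of $w$ in \cite[\S III.2.3]{lazard}, combined with the $p$-valuation axiom $\omega(g^p) = \omega(g)+1$, forces $w(p)=1$, so the class $\pi$ of $p$ in $\gr_1(\bbZ_p\llbracket\cG\rrbracket)$ is nonzero; by the very definition of $P$ recalled in the previous subsection, the action of $P$ on $\gr_\omega(\bbZ_p\llbracket\cG\rrbracket)$ is simply multiplication by $\pi$. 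Tensoring with $\bbF_p$ over $\bbF_p[P]$ therefore amounts to quotienting by the principal homogeneous ideal $(\pi)$. Since the filtration on $\bbF_p\llbracket\cG\rrbracket$ is the quotient filtration $\overline{w}$ induced from the reduction $\bbZ_p\llbracket\cG\rrbracket \twoheadrightarrow \bbF_p\llbracket\cG\rrbracket$, the desired isomorphism in turn reduces to showing that the short exact sequence
$$0 \longrightarrow p\bbZ_p\llbracket\cG\rrbracket \longrightarrow \bbZ_p\llbracket\cG\rrbracket \longrightarrow \bbF_p\llbracket\cG\rrbracket \longrightarrow 0$$
is \emph{strict} for the $w$-filtrations, so that taking associated gradeds is exact and produces $\gr_\omega(\bbF_p\llbracket\cG\rrbracket) \cong \gr_\omega(\bbZ_p\llbracket\cG\rrbracket)/(\pi)$.

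This strictness is the only substantive technical point, and I expect it to be the main obstacle. Concretely, given $\xi = p\eta$ in $\bbZ_p\llbracket\cG\rrbracket$ with $w(\xi)\geq \nu$, one needs $w(\eta)\geq \nu-1$; because $w$ is a genuine valuation rather than merely a filtration, multiplicativity gives $w(\eta) = w(\xi) - w(p) \geq \nu - 1$, which is exactly what is needed. Assembling the base-change interpretation of $\otimes_{\bbF_p[P]}\bbF_p$ on both sides and applying it to the isomorphism of the previous theorem then yields the corollary.
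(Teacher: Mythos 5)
Your argument is correct and takes the same approach as the paper, which proves this corollary with the single sentence ``Sending $P$ to $0$ in the previous result gives the following''; you have simply filled in the details that the paper leaves implicit. Both legs of your analysis --- the universal-property identification of $U_{\bbF_p[P]}(\gr(\cG))\otimes_{\bbF_p[P]}\bbF_p$ with $U_{\bbF_p}(\gr(\cG)\otimes_{\bbF_p[P]}\bbF_p)$, and the strictness of the mod-$p$ reduction for the $w$-filtration (which is indeed guaranteed because $w$ is a multiplicative valuation, so that $w(p\eta)=1+w(\eta)$ and hence $\gr_\omega(\bbF_p\llbracket\cG\rrbracket)\cong\gr_\omega(\bbZ_p\llbracket\cG\rrbracket)/(\pi)$) --- are exactly the facts the paper's one-liner presupposes.
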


Finally, if $\bbF$ is a finite extension of $\bbF_p$, we define analogous objects and constructions (e.g., $\bbF\llbracket \cG\rrbracket_{\nu^+}, \gr_{\fm}(\bbF\llbracket \cG\rrbracket)$, etc.) by applying the base change $-\otimes_{\bbF_p}\bbF$.

\subsection{Reductive groups}
Let $F$ denote a finite, unramifed extension of $\bbQ_p$ of degree $f \geq 1$, with ring of integers $\cO_F$, maximal ideal $\fm_F$, and residue field $k_F$ of size $q := p^f$.  We let $\textnormal{val}_p:F \longrightarrow \bbZ \cup \{\infty\}$ denote the valuation on $F$ normalized so that $\textnormal{val}_p(p) = 1$.

Fix a connected reductive group $\mathbf{G}$, defined and split over $F$.  We also use the notation $\mathbf{G}$ to denote a split connected reductive integral model over $\cO_F$, which exists by \cite[Thm. 1.2]{conrad:nonsplit}.  Let $\mathbf{Z}$ denote the connected center of $\mathbf{G}$.  We fix a Borel subgroup $\mathbf{B} = \mathbf{T}\mathbf{U}$, where $\mathbf{T}$ is an $F$-split maximal torus of $\mathbf{G}$ and $\mathbf{U}$ is the unipotent radical of $\mathbf{B}$.  We use the same letters to denote the $\cO_F$-models of all these subgroups in the $\cO_F$-group scheme $\mathbf{G}$.  Finally, we let $G := \mathbf{G}(F)$ denote the group of $F$-rational points of $\mathbf{G}$ (with analogous notation for other $\cO_F$-groups).

Let $X^*(\mathbf{T})$ and $X_*(\mathbf{T})$ denote the character and cocharacter lattices of $\mathbf{T}$, respectively.  Let $\Phi \subset X^*(\mathbf{T})$ denote the subset of roots of $\mathbf{G}$ relative to $\mathbf{T}$.  The choice of $\mathbf{B}$ defines a subset of positive roots $\Phi^+$; we let $\Delta \subset \Phi^+$ denote the associated set of simple roots and $\Phi^- := -\Phi^+$ the set of negative roots, so that $\Phi = \Phi^+ \sqcup \Phi^-$.  Further, we let $\textnormal{ht}:\Phi \longrightarrow \bbZ$ denote the height function relative to $\Delta$.  We assume that $\Phi$ is irreducible, and let $h$ denote the Coxeter number of $\mathbf{G}$, defined as $1 + \max_{\alpha \in \Phi}\{\textnormal{ht}(\alpha)\}$.

We fix once and for all a total order on $\Phi$ refining the partial order induced by $\textnormal{ht}$.  Given $\alpha \in \Phi^+$, we let 
$$\varphi_\alpha:\mathbf{SL}_{2/\cO_F} \longrightarrow \mathbf{G}$$ 
denote the morphism of $\cO_F$-group schemes defined as in \cite[Exer. 6.5.1]{conrad:reductive}.  The coroot $\alpha^\vee :\mathbf{G}_{m/\cO_F} \longrightarrow \mathbf{T}$ is then given as
$$\alpha^\vee(x) = \varphi_\alpha\left(\begin{pmatrix}
    x & 0 \\ 0 & x^{-1}
\end{pmatrix}\right).$$
Further, we define $u_\alpha, u_{-\alpha}:\mathbf{G}_{a/\cO_F} \longrightarrow \mathbf{G}$ as
$$u_\alpha(x) := \varphi_\alpha\left(\begin{pmatrix} 1 & x \\ 0 & 1\end{pmatrix}\right), \qquad u_{-\alpha}(x) := \varphi_\alpha\left(\begin{pmatrix} 1 & 0 \\ x & 1\end{pmatrix}\right).$$
For any $\alpha \in \Phi$, the image of $u_\alpha$ is the root subgroup $\mathbf{U}_\alpha$, and we have $tu_{\alpha}(x)t^{-1} = u_\alpha(\alpha(t)x)$ for $t\in T, x \in F$.  Moreover, by \cite[Prop. 5.1.16]{conrad:reductive}, multiplication induces isomorphisms of $\cO_F$-schemes
\begin{equation}
\label{unip-mult}
  \prod_{\alpha \in \Phi^+}\mathbf{U}_\alpha \stackrel{\sim}{\longrightarrow}\mathbf{U},\qquad \prod_{\alpha \in \Phi^-}\mathbf{U}_\alpha \stackrel{\sim}{\longrightarrow}\mathbf{U}^-  
\end{equation}
where $\mathbf{U}^-$ denotes the unipotent radical of the Borel subgroup opposite to $\mathbf{B}$, and the products are taken in the fixed total order.

The root morphisms $u_\alpha$ satisfy the following commutator formula \cite[Prop. 5.1.14]{conrad:reductive}: if $\beta \neq \pm \alpha$, then
\begin{equation}
\label{commutator}
    [u_\alpha(x),u_\beta(y)] := u_\alpha(x)u_\beta(y)u_\alpha(x)^{-1}u_\beta(y)^{-1} = \prod_{\substack{i,j > 0 \\ i\alpha + j\beta \in \Phi}}u_{i\alpha + j\beta}(c_{\alpha,\beta;i,j}x^iy^j),
\end{equation}
the product taken in the fixed total order.  Here, the $c_{\alpha,\beta;i,j}$ are fixed \textit{integers} (since $\mathbf{G}$ and all of the morphisms above can be defined over $\mathbb{Z}$) which are determined by the collection of morphisms $\{\varphi_\alpha\}_{\alpha \in \Phi^+}$ (see also \cite[\S\S 3.2.1 -- 3.2.3]{BT2}).  More precisely, we have $c_{\alpha,\beta;i,j} \in \{\pm 1, \pm 2, \pm 3\}$ (\cite[Exp. XXIII, Prop. 6.4]{SGA3}).

\subsection{\texorpdfstring{$p$-valuations on $p$-adic reductive groups}{p-valuations on p-adic reductive groups}}

Our next task will be to define a $p$-valuation on the pro-$p$ Iwahori subgroup of a $p$-adic reductive group.

We begin with a lemma.  In the following, we define $T_n := \textnormal{ker}(\mathbf{T}(\cO_F) \longrightarrow \mathbf{T}(\cO_F/p^n\cO_F))$ for $n \in \mathbb{Z}_{\geq 0}$ (and analogously for $\mathbf{Z}$).  Note that $T_n$ is the subgroup generated by $\lambda(z)$ as $\lambda$ ranges over $X_*(\mathbf{T})$ and $z$ ranges over $1 + p^n\cO_F$.

\begin{lem}
	\label{lem:boundconsq's}
	Suppose $p > h + 1$, where $h$ denotes the Coxeter number of $\mathbf{G}$.  Then:
	\begin{enumerate}
		\item We have $c_{\alpha,\beta;i,j} \in \cO_F^\times \cap \bbZ$ (whenever these numbers are defined).
		\item For $n \geq 1$, we have $T_n \cong Z_n \times \langle \alpha^\vee(1 + p^n\cO_F)\rangle_{\alpha \in \Phi}$ as topological groups.
		\item Every pro-$p$ subgroup of $G$ is torsion-free.
	\end{enumerate}
\end{lem}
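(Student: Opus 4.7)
I would prove the three parts largely independently.

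Part (1) follows immediately from the bound $c_{\alpha,\beta;i,j} \in \{\pm 1, \pm 2, \pm 3\}$ already recorded in the excerpt. The hypothesis $p > h + 1$ forces $p \geq 5$ (the case $h = 2$, type $A_1$, being vacuous as there are no $\beta \neq \pm\alpha$), so no $c_{\alpha,\beta;i,j}$ is divisible by $p$, and hence $c_{\alpha,\beta;i,j} \in \cO_F^\times \cap \bbZ$.

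For Part (2), my plan is to translate the question into a lattice statement via the $p$-adic logarithm. For $n \geq 1$ (and $p > 2$, guaranteed by our hypothesis), $\log$ induces an isomorphism of $\cO_F$-modules
$$T_n \stackrel{\sim}{\longrightarrow} p^n\cO_F \otimes_{\bbZ} X_*(\mathbf{T}),$$
sending $\lambda(1 + p^n z) \mapsto \log(1 + p^n z) \otimes \lambda$. Under this identification, $Z_n$ corresponds to $p^n\cO_F \otimes X_*(\mathbf{Z})$ and $\langle \alpha^\vee(1 + p^n\cO_F)\rangle_{\alpha \in \Phi}$ corresponds to $p^n\cO_F \otimes \bbZ\Phi^\vee$. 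Thus the claim reduces to showing that the natural homomorphism $X_*(\mathbf{Z}) \oplus \bbZ\Phi^\vee \longrightarrow X_*(\mathbf{T})$ becomes an isomorphism after applying $-\otimes_{\bbZ}\cO_F$. Irreducibility of $\Phi$ ensures that $X_*(\mathbf{Z})\otimes\bbQ$ and $\bbZ\Phi^\vee\otimes\bbQ$ are complementary $\bbQ$-subspaces of $X_*(\mathbf{T})\otimes\bbQ$, which gives injectivity. The cokernel is a finite abelian group whose order divides $|Z(\mathbf{G}_{sc})|$, and a case-by-case inspection of the irreducible root systems shows $|Z(\mathbf{G}_{sc})| \leq h$. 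The hypothesis $p > h+1$ then forces the cokernel to have order prime to $p$, so it is killed by $-\otimes_{\bbZ}\cO_F$, yielding the desired direct-product decomposition.

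For Part (3), I would first reduce to showing that the pro-$p$ Iwahori $I$ is torsion-free. A pro-$p$ subgroup of $G$ is compact, so by the Bruhat--Tits fixed-point theorem it is contained in a parahoric subgroup; being pro-$p$, it then lies inside a pro-$p$ Sylow of that parahoric, which is $G$-conjugate to $I$. For $I$ itself, I would invoke the Iwahori decomposition $I = (I \cap \mathbf{U}^-(F)) \cdot T_1 \cdot (I \cap \mathbf{U}(F))$ together with the $p$-adic exponential on the two unipotent factors (valid under $p > h$, giving identifications with torsion-free $\cO_F$-modules) and Part (2) for $T_1$. The hardest step, I expect, will be passing from torsion-freeness of the three Iwahori factors to torsion-freeness of the product, since the Iwahori decomposition is only set-theoretic, not a direct product of groups; to handle this I anticipate exploiting the commutator formula \eqref{commutator}, which forces mixed commutators into strictly deeper filtration levels, combined with a Lazard-style induction argument to rule out any non-trivial finite-order element of $I$.
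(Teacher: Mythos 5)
Parts (1) and (2) are correct, but part (3) contains a genuine gap.

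For part (1), your argument is essentially the paper's: $p > h+1 \geq 3$ forces $p \geq 5$, which kills the $\{\pm 1, \pm 2, \pm 3\}$ denominators. For part (2), you take a genuinely different route. The paper argues directly on group elements: letting $N$ be the index of $X_*(\mathbf{Z}) \oplus \bigoplus_{\alpha\in\Delta}\bbZ\alpha^\vee$ in $X_*(\mathbf{T})$, it shows $N$ is prime to $p$ via the isogeny $\mathbf{T}^{\textnormal{sc}} \to \mathbf{T}/\mathbf{Z}$, and then uses that $z \mapsto z^N$ is an automorphism of $1 + p^n\cO_F$ to split $\lambda(z)$ into a central and a coroot piece, finishing with a Cartan-matrix determinant argument for the trivial intersection. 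You instead linearize everything at once with the $p$-adic logarithm and reduce to a single lattice statement ($X_*(\mathbf{Z}) \oplus \bbZ\Phi^{\vee} \to X_*(\mathbf{T})$ becoming an isomorphism after $\otimes\cO_F$). Both proofs hinge on the same prime-to-$p$ index and both get it from $|Z(\mathbf{G}^{\textnormal{sc}})|$; your version is cleaner in that injectivity and the triviality of the intersection come for free from the direct-sum decomposition over $\bbQ$ (note this decomposition holds for any reductive group, not just irreducible $\Phi$). Your route is a legitimate alternative.

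For part (3), however, your proposal does not constitute a proof. The reduction to the pro-$p$ Iwahori $I$ (every pro-$p$ subgroup is compact, fixes a point of the building, and lands inside a conjugate of $I$) is plausible but requires care for non-simply-connected $\mathbf{G}$, where point stabilizers can be larger than parahorics. More seriously, the step you yourself flag as "the hardest" -- deducing torsion-freeness of $I$ from torsion-freeness of its three Iwahori factors -- is left as an anticipation rather than an argument. Controlling how the $U^-$, $T$, and $U$ components of $i$ recombine under $i \mapsto i^p$ is exactly the content of verifying that $\omega$ satisfies the $p$-valuation axiom $\omega(g^p) = \omega(g) + 1$, which the paper cites from Lahiri--Sorensen rather than re-derives; once one has a $p$-valuation, torsion-freeness is immediate from the axioms, so filling your gap amounts to reproving that result. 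The paper avoids all of this: it uses $H^1(F, \mathbf{Z}) = 1$ to pass to the semisimple quotient $(\mathbf{G}/\mathbf{Z})(F)$, invokes Totaro's torsion bound (his Prop.\ 12.1) to conclude torsion-freeness there under $p > h+1$, and handles the central $Z_1 \cong (1 + p\cO_F)^r$ directly. You should either carry out the Lazard-style computation in full or, better, replace your sketch with the Totaro citation.
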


\begin{proof}
	\begin{enumerate}
		\item Since $h$ is always at least 2, our assumption implies $p > h + 1 \geq 3$, which gives the claim.
		\item Let $\mathbf{G}^{\textnormal{sc}} \longrightarrow \mathbf{G}/\mathbf{Z}$ denote the simply connected cover of the semisimple group $\mathbf{G}/\mathbf{Z}$ (cf. \cite[Exer. 6.5.2]{conrad:reductive}).  The preimage of $\mathbf{T}/\mathbf{Z}$ is a maximal torus $\mathbf{T}^{\textnormal{sc}}$ in $\mathbf{G}^{\textnormal{sc}}$, and we obtain a short exact sequence
		$$1 \longrightarrow \mathbf{K} \longrightarrow \mathbf{T}^{\textnormal{sc}}\longrightarrow \mathbf{T}/\mathbf{Z} \longrightarrow 1$$
		of algebraic groups, where $\mathbf{K}$ is finite and central in $\mathbf{G}^{\textnormal{sc}}$.  Its order is therefore bounded above by 4 if $\Phi$ is not of type $\mathsf{A}_n$, and bounded above by $n + 1 = h$ if $\Phi$ is of type $\mathsf{A}_n$ (for example, see \cite[\S 2.1.13]{platonovrapinchuk}).  Thus, by taking character groups of the above exact sequence and dualizing, we see that the image of $X_*(\mathbf{T}^{\textnormal{sc}})$ is a finite-index subgroup of $X_*(\mathbf{T})/X_*(\mathbf{Z})$, and moreover that this index is prime to $p$ (using the assumption $p > h + 1$).  This image is precisely $\bigoplus_{\alpha \in \Delta} \bbZ\alpha^\vee$ (i.e., the span of the coroots), from which we conclude that the index $N$ of $X_*(\mathbf{Z}) \oplus \bigoplus_{\alpha \in \Delta}\bbZ\alpha^\vee$ in $X_*(\mathbf{T})$ is prime to $p$.

		Suppose now that $\lambda \in X_*(\mathbf{T})$, and let $z \in 1 + p^n\cO_F$.  By the above remarks, we may write $N\lambda$ as $\lambda_Z + \lambda'$, where $\lambda_Z \in X_*(\mathbf{Z})$ and $\lambda' \in \bigoplus_{\alpha \in \Delta}\bbZ\alpha^\vee$.  Furthermore, since $N$ and $p$ are coprime, the map $x \longmapsto x^N$ is an automorphism of $1 + p^n\cO_F$.  Therefore, we obtain
		$$\lambda(z) = \lambda(z^{1/N})^N = (N\lambda)(z^{1/N}) = (\lambda_Z + \lambda')(z^{1/N}) = \lambda_Z(z^{1/N}) \lambda'(z^{1/N}).$$
		This shows that $T_n$ is generated by $Z_n$ and $\langle\alpha^\vee(1 + p^n\cO_F)\rangle_{\alpha \in \Delta}$.

		It suffices to show that these subgroups have trivial intersection in $T_n$.  Suppose $t = \prod_{\alpha \in \Delta} \alpha^\vee(z_\alpha)$ is an element in this intersection for some $z_\alpha \in 1 + p^n\cO_F$.  Using the fact that $t$ is central, we obtain 
		$$1 = \beta(t) = \prod_{\alpha \in \Delta}\beta(\alpha^\vee(z_\alpha)) = \prod_{\alpha \in \Delta} z_\alpha^{\langle \beta, \alpha^\vee\rangle}$$
		for all $\beta \in \Delta$.  The assumption $p > h + 1$ implies that the determinant of the Cartan matrix of $\Phi$ is prime to $p$ (\cite[Planches I -- IX]{bourbaki:lie}), which shows that the only solution to the above system of equations is the trivial one (for example, by applying the $p$-adic logarithm).  This finishes the claim.  
		\item Suppose $J$ is a pro-$p$ subgroup of $G = \mathbf{G}(F)$, and suppose $j \in J$ satisfies $j^p = 1$.  Consider the short exact sequence
		$$1 \longrightarrow \mathbf{Z} \longrightarrow \mathbf{G} \longrightarrow \mathbf{G}/\mathbf{Z} \longrightarrow 1$$
		of algebraic groups.  Since $\mathbf{Z}$ is a split torus, by Hilbert's Theorem 90 we have $H^1(F,\mathbf{Z}) = 1$.  Thus, taking $F$-points of the above sequence is exact, and we obtain a short exact sequence of topological groups
		$$1 \longrightarrow Z \longrightarrow G \longrightarrow (\mathbf{G}/\mathbf{Z})(F) \longrightarrow 1.$$
		Since $\mathbf{G}/\mathbf{Z}$ is semisimple, the assumption $p > h + 1$ and \cite[Prop. 12.1]{totaro} imply that the image of $J$ in $(\mathbf{G}/\mathbf{Z})(F)$ is torsion-free.  In particular, the image of $j$ in $(\mathbf{G}/\mathbf{Z})(F)$ is trivial, so $j \in J \cap Z$.  The maximal pro-$p$ subgroup of $Z$ is isomorphic to $(1 + p\cO_F)^r$ for some $r$, and this group has no $p$-torsion by our assumption that $F$ is unramified over $\bbQ_p$ (and $p > 2$).  Thus, $j = 1$, and we are done.  
	\end{enumerate}
\end{proof}

\textbf{We assume henceforth that $p > h + 1$.}  We let $I \subset \mathbf{G}(\cO_F)$ denote the pro-$p$ Iwahori subgroup associated to $\Phi^+$.  Using \cite[Prop. I.2.2]{schneiderstuhler} and the morphisms \eqref{unip-mult}, we obtain the Iwahori factorization: multiplication induces a homeomorphism
\begin{equation}
\label{iwahori-prod}
    \left(\prod_{\beta \in \Phi^-}u_{\beta}(p\cO_F)\right)\times T_1\times \left(\prod_{\alpha \in \Phi^+}u_\alpha(\cO_F)\right) \stackrel{\sim}{\longrightarrow} I.
\end{equation}
The $p$-valuation on $I$ will be obtained using this factorization.  We first define three functions:
\begin{itemize}
\item Define a function $\omega_+: I \cap U \longrightarrow \mathbb{R}_{>0} \cup \{\infty\}$ as follows.  Given $i_+ \in I \cap U$, let us write $i_+ = \prod_{\alpha \in \Phi^+}u_\alpha(x_\alpha)$ with $x_\alpha \in \cO_F$.  We then set 
$$\omega_+(i_+) := \min\left\{\textnormal{val}_p(x_\alpha) + \frac{\textnormal{ht}(\alpha)}{h} ~:~ \alpha\in \Phi^+\right\}.$$
\item Define a function $\omega_-: I \cap U^- \longrightarrow \mathbb{R}_{>0} \cup \{\infty\}$ as follows.  Given $i_- \in I \cap U^-$, let us write $i_- = \prod_{\beta \in \Phi^-}u_\beta(x_\beta)$ with $x_\beta \in p\cO_F$.  We then set 
$$\omega_-(i_-) := \min\left\{\textnormal{val}_p(x_\beta) + \frac{\textnormal{ht}(\beta)}{h} ~:~ \beta\in \Phi^-\right\}.$$
\item Finally, let us define a function $\omega_T:T_1 \longrightarrow \mathbb{R}_{>0} \cup \{\infty\}$ by 
$$\omega_T(t) := \sup\{n \in \mathbb{Z}_{\geq 0} ~|~ t \in T_n\}.$$
\end{itemize}
We then define $\omega:I \longrightarrow \mathbb{R}_{>0} \cup \{\infty\}$ as follows: given $i \in I$, we use \eqref{iwahori-prod} to write $i = i_- t i_+$ with $i_- \in I \cap U^-$, $i_+ \in I \cap U$, and $t \in T_1$.  We then set
\begin{equation}
    \label{p-valuation}
    \omega(i) := \min\left\{\omega_-(i_-),~\omega_+(i_+),~\omega_T(t)\right\}.
\end{equation}
According to \cite[Prop. 3.5]{lahirisorensen}, the function $\omega$ defines a $p$-valuation on $I$.

\section{\texorpdfstring{The graded Lie algebra of $I$}{The graded Lie algebra of I}}

Our first task will be to find an explicit presentation for the graded $\mathbb{F}_p$-Lie algebra associated to the $p$-valued group $(I,\omega)$.  We begin by computing the filtered pieces $I_\nu$ and $I_{\nu^+}$, then calculate bases for the graded pieces $\gr_\nu(I) := I_\nu/I_{\nu^+}$ and the $P$ operator $P:\gr_\nu(I)\longrightarrow\gr_{\nu+1}(I)$, and finally compute Lie brackets.

\subsection{\texorpdfstring{The filtration on $I$}{The filtration on I}}

For any $\nu\in\R$, we define
$$I_\nu := \left\{i \in I: \omega(i) \geq \nu\right\},\qquad I_{\nu^+} := \left\{i \in I: \omega(i) > \nu\right\},$$
and set $\gr_\nu(I) := I_\nu/I_{\nu^+}$.  Note that $\omega$ takes values in the set $\frac{1}{h}\bbZ_{\geq 1}$;  therefore, we have $\gr_\nu(I) = 1$ for $\nu \not\in \frac{1}{h}\bbZ_{\geq 1}$, and it suffices to compute the filtered pieces for $\nu\in \frac{1}{h}\bbZ_{\geq 1}$.

For $k \in \{1,2,\ldots, h - 1\}$, define
$$\Phi_k := \{\gamma \in \Phi: \textnormal{ht}(\gamma) \equiv k~(\textnormal{mod}~h)\},$$
the \textbf{roots of height $k$ modulo $h$}.  Similarly, we define 
$$\Phi_k^- := \Phi_k\cap \Phi^-, \qquad \Phi_k^+ := \Phi_k\cap\Phi^+,$$ 
and note that $\gamma\in\Phi_k$ if and only if $-\gamma\in\Phi_{h-k}$.  This implies that any $\beta \in \Phi^-$ satisfies $\beta \in \Phi_{h + \textnormal{ht}(\beta)}$.

According to the homeomorphism \eqref{iwahori-prod}, we write an element $i \in I$ as 
\[
i = i_- t i_+ = \prod_{k=1}^{h-1}\prod_{\beta\in\Phi_{k}^-}u_{\beta}(px_{\beta})\cdot  t \cdot \prod_{k=1}^{h-1}\prod_{\alpha\in\Phi_k^+}u_{\alpha}(x_{\alpha}),
\]
where $x_\beta,x_\alpha\in \cO_F$, and $t\in T_1$.  If $t \in T_n \smallsetminus T_{n + 1}$, then by equation \eqref{p-valuation} we get
\begin{align}
\omega(i)&=\min\left\{n,~ \textnormal{val}_p(px_\beta)+\frac{\textnormal{ht}(\beta)}{h},~\textnormal{val}_p(x_\alpha)+\frac{\textnormal{ht}(\alpha)}{h} ~:~ \beta\in\Phi^-,\alpha\in\Phi^+\right\} \notag\\
&=\min\left\{n,~ \textnormal{val}_p(x_\beta)+\left(1+\frac{\textnormal{ht}(\beta)}{h}\right),~\textnormal{val}_p(x_\alpha)+\frac{\textnormal{ht}(\alpha)}{h} ~:~ \beta\in\Phi^-,\alpha\in\Phi^+\right\} \notag\\
&=\min\left\{n,~\textnormal{val}_p(x_{\beta})+\frac{k}{h},~\textnormal{val}_p(x_{\alpha})+\frac{k}{h} ~:~ \beta\in\Phi^-_{k},\alpha\in\Phi_k^+,k = 1,2,\ldots, h - 1\right\} \notag\\
&=\min\left\{n,~\textnormal{val}_p(x_{\gamma})+\frac{k}{h} ~:~\gamma\in\Phi_k, k = 1,2,\ldots, h - 1\right\}. \label{eqn:omega-hgt}
\end{align}

Now that we have chosen coordinates well-adapted to the $p$-valuation $\omega$, we can calculate the filtered pieces $I_\nu$ and $I_{\nu^+}$. 
\begin{itemize}
\item We begin with $I_n$ for $n\in\Z_{\geq 1}$. By equation \eqref{eqn:omega-hgt}, for an element $i=i_-ti_+$ to lie in $I_n$, we must have $t\in T_n$ and $\textnormal{val}_p(x_{\gamma})\geq n$ for all $\gamma\in \Phi$. Thus, we have
\begin{equation}
\label{eqn:gr-integral1}
I_n = \left(\prod_{\beta \in \Phi^-}u_{\beta}(p^{n+1}\cO_F)\right)\cdot T_n\cdot\left(\prod_{\alpha \in \Phi^+}u_\alpha(p^n\cO_F)\right).    
\end{equation}
To obtain $I_{n^+}$, we now must require that $t\in T_{n+1}$ and $\textnormal{val}_p(x_{\gamma})\geq n$. Thus, we get
\begin{equation}
\label{eqn:gr-integral2}
I_{n^+} = \left(\prod_{\beta \in \Phi^-}u_{\beta}(p^{n+1}\cO_F)\right)\cdot T_{n+1}\cdot \left(\prod_{\alpha \in \Phi^+}u_\alpha(p^n\cO_F)\right).
\end{equation}

\item Next, we consider non-integral jumps, so fix $n\in \Z_{\geq 0}$ and $k\in\{1,\ldots,h-1\}$.  Using equation \eqref{eqn:omega-hgt}, we see for an element $i = i_-ti_+$ to lie in $I_{n+\frac{k}{h}}$, we must have $t\in T_{n+1}$, along with the following condition: for $i < k$, we have $x_{\gamma}\in p^{n+1}\cO_F$ for all $\gamma \in \Phi_i$, and for $i\geq k$, we have $x_{\gamma}\in p^n\cO_F$ for all $\gamma \in \Phi_i$. Consequently, the group $I_{n + \frac{k}{h}}$ takes the form
\begin{align}
I_{n+\frac{k}{h}} &= \left(\prod_{i=1}^{k - 1}\prod_{\beta\in\Phi_{i}^-}u_{\beta}(p^{n+2}\cO_F)\right)\cdot\left(\prod_{i=k}^{h-1}\prod_{\beta\in\Phi_{i}^-} u_{\beta}(p^{n+1}\cO_F)\right)\cdot T_{n+1} \notag\\
& \qquad\cdot \left(\prod_{i=1}^{k-1}\prod_{\alpha\in\Phi_i^+}u_{\alpha}(p^{n+1}\cO_F)\right)\cdot\left(\prod_{i=k}^{h-1}\prod_{\alpha\in\Phi_i^+}u_{\alpha}(p^n\cO_F)\right). \label{eqn:gr-nonintegral1}
\end{align}
To calculate $I_{(n+\frac{k}{h})^+}$, we see that the change occurs only for the index $i =k$: instead of $x_{\gamma}\in p^n\cO_F$ for all $\gamma \in \Phi_k$, we impose the stronger condition $x_{\gamma}\in p^{n+1}\cO_F$. Thus we see that
\begin{align}
I_{(n+\frac{k}{h})^+} &= \left(\prod_{i=1}^{k}\prod_{\beta\in\Phi_{i}^-}u_{\beta}(p^{n+2}\cO_F)\right)\cdot\left(\prod_{i=k + 1}^{h-1}\prod_{\beta\in\Phi_{i}^-} u_{\beta}(p^{n+1}\cO_F)\right)\cdot T_{n+1} \notag\\
& \qquad \cdot \left(\prod_{i=1}^{k}\prod_{\alpha\in\Phi_i^+}u_{\alpha}(p^{n+1}\cO_F)\right)\cdot\left(\prod_{i=k+1}^{h-1}\prod_{\alpha\in\Phi_i^+}u_{\alpha}(p^n\cO_F)\right). \label{eqn:gr-nonintegral2}
\end{align}
\end{itemize}

\subsection{\texorpdfstring{The graded pieces $\gr_\nu(I)$}{The graded pieces gr_nu(I)}} \label{sec: graded-pieces-structure}

We now calculate generators for the graded pieces $\gr_{\nu}(I) = I_{\nu}/I_{\nu^+}$ for $\nu \in \frac{1}{h}\bbZ_{\geq 1}$.  To this end, we fix an element $\xi \in k_F$ for which $\bbF_p(\xi) = k_F$.  

\begin{itemize}
\item Suppose first that $n \in \bbZ_{\geq 1}$, and consider the graded piece $\gr_n(I)$.  Using equations \eqref{eqn:gr-integral1} and \eqref{eqn:gr-integral2}, and the fact that $T_n$ normalizes the subgroups $u_\gamma(p^m\cO_F)$, we see that
\begin{equation}\label{eq: basis-integral}
    \gr_n(I) = I_n/I_{n^+} \cong T_{n}/T_{n + 1}.
\end{equation}
(In effect, we see that the unipotent elements of $I_{n}$ already lie in $I_{n^+}$.)  A spanning set for this $\bbF_p$-vector space is given by the $\gr(\lambda(1 + p^n[\xi]^r))$, as $\lambda$ ranges over a basis for $X_*(\mathbf{T})$ and $0 \leq r \leq f - 1$.  Moreover, the image of $\lambda(1 + p^n\cO_F)$ in $\gr_n(I)$ has the structure of a one-dimensional $k_F$-vector space in the natural way.

\item Suppose now that $n \in \bbZ_{\geq 0}$ and $k\in \{1,2,\ldots,h-1\}$. We claim that a basis for the $\F_p$-vector space $\gr_{n + \frac{k}{h}}(I)$ is given by 
\begin{equation}
    \label{eqn:basis-nonintegral}
    \left\{\gr(u_{\beta}(p^{n+1}[\xi]^r)),~\gr(u_{\alpha}(p^n[\xi]^r)) ~:~ \beta\in\Phi_k^-, \alpha\in\Phi_k^+, 0 \leq r \leq f - 1\right\}.
\end{equation}
Indeed, let $u_1, \dots, u_s$ denote the elements $u_{\beta}(p^{n+1}[\xi]^r),~u_{\alpha}(p^n[\xi]^r)$ for $\beta\in\Phi_k^-, \alpha\in\Phi_k^+,$ and $0 \leq r \leq f - 1$, ordered according to the total order on $\Phi$.  The uniqueness of the decomposition in \eqref{eqn:gr-nonintegral1} implies that for any sequence $a_1, a_2, \ldots, a_s \in \{0,1,\ldots, p - 1\}$, not all zero, the element $u_1^{a_1} \cdots u_s^{a_s}$ does not lie in  $I_{(n+\frac{k}{h})^+}$.  Thus, the set \eqref{eqn:basis-nonintegral} is linearly independent.  On the other hand, using normality of $I_{\nu^+}$ in $I_\nu$ shows that any element of $\gr_{n + \frac{k}{h}}(I) = I_{n + \frac{k}{h}}/I_{(n + \frac{k}{h})^+}$ can be written as $gI_{(n + \frac{k}{h})^+}$ with $g \in \prod_{\beta\in\Phi_k^-}u_{\beta}(p^{n+1}\cO_F) \cdot \prod_{\alpha\in\Phi_k^+}u_{\alpha}(p^n\cO_F)$.  This means \eqref{eqn:basis-nonintegral} spans $\gr_{n + \frac{k}{h}}(I)$, and implies that as $\bbF_p$-vector spaces we have
\[
\gr_{n+\frac{k}{h}}(I) = \bigoplus_{\beta\in\Phi_k^-}u_{\beta}(p^{n+1}\cO_F/p^{n+2}\cO_F)\oplus \bigoplus_{\alpha\in\Phi_k^+}u_{\alpha}(p^n\cO_F/p^{n+1}\cO_F).
\]
As above, the image of $u_\beta(p^{n + 1}\cO_F)$ for $\beta \in \Phi^-_k$ (resp., $u_\alpha(p^n\cO_F)$ for $\alpha \in \Phi^+_k$) in $\gr_{n + \frac{k}{h}}(I)$ is naturally endowed with the structure of a one-dimensional $k_F$-vector space.  
\end{itemize}

\subsection{\texorpdfstring{The operator $P$}{The operator P}}
\label{subsec:P-operator}

Next, we calculate the operator $P:\gr_\nu(I)\longrightarrow\gr_{\nu+1}(I)$, defined by $iI_{\nu^+} \longmapsto i^pI_{(\nu + 1)^+}$.  

\begin{itemize}
\item  Suppose first that $n \in \bbZ_{\geq 1}$, so that $\gr_n(I) \cong T_n/T_{n + 1}$, and let $\lambda \in X_*(\mathbf{T})$.  Applying the operator $P$ to $\gr(\lambda(1 + p^n[\xi]^r))$ in $\gr_n(I)$ gives
\[
P\left(\lambda(1+p^n[\xi]^r)I_{n^+}\right) = \lambda(1+p^{n}[\xi]^r)^pI_{(n + 1)^+} = \lambda(1+p^{n + 1}[\xi]^r)I_{(n + 1)^+}.
\]
Thus, we see that $P$ induces an $\bbF_p$-linear isomorphism $P: \gr_n(I) \stackrel{\sim}{\longrightarrow} \gr_{n + 1}(I)$.

\item Suppose now that $n \in \bbZ_{\geq 0}$ and $k\in \{1,2,\ldots,h-1\}$. Applying the operator $P$ to the basis elements \eqref{eqn:basis-nonintegral} gives
\[
P\left(u_{\alpha}(p^n[\xi]^r)I_{(n + \frac{k}{h})^+}\right) = u_{\alpha}(p^{n}[\xi]^r)^pI_{(n + \frac{k}{h}+1)^+} = u_{\alpha}(p^{n+1}[\xi]^r)I_{(n + \frac{k}{h}+1)^+}
\]
for $\alpha\in\Phi_k^+$, and analogously for $\beta \in \Phi^-_k$. In particular, we see that $P$ maps basis elements of $\gr_{n + \frac{k}{h}}(I)$ to basis elements of $\gr_{n + \frac{k}{h} + 1}(I)$, and induces an $\bbF_p$-linear isomorphism $P:\gr_{n + \frac{k}{h}}(I) \stackrel{\sim}{\longrightarrow}\gr_{n + \frac{k}{h} + 1}(I)$.  
\end{itemize}
Furthermore, by the explicit descriptions above, we see that the operator $P$ is actually $k_F$-linear.

\subsection{Lie bracket}

Our final task will be to compute Lie brackets of basis elements of $\gr(I)$. We recall that these Lie brackets are induced by the commutator of elements in $I$. We also recall the structure constants $c_{\alpha, \beta;i,j}$ defined by the commutator formula \eqref{commutator}.

In what follows, we use the following notation: for $\gamma \in \Phi$, we set
$$\delta_\gamma := \begin{cases} 0 & \textnormal{if $\gamma \in \Phi^+$,} \\ 1 & \textnormal{if $\gamma \in \Phi^-$}.\end{cases}$$

\begin{prop}
\label{prop:comm1}
Fix $\alpha, \beta \in \Phi$ with $\beta \neq \pm \alpha$.  Let $n,m \geq 0$, and choose $x_\alpha, x_\beta \in \cO_F^\times$, so that $\gr(u_\alpha(p^{n + \delta_\alpha}x_\alpha))$ is an element of $\gr_{n + \frac{k}{h}}(I)$ for some $0 < k < h$ (and likewise for $\beta$ we get an element in $\gr_{m+\frac{\ell}{h}}(I)$ with some $0 < \ell < h$).  Then, in $\gr_{n + m + \frac{k+\ell}{h}}(I)$, we have
\[
\left[\gr(u_{\alpha}(p^{n + \delta_\alpha}x_\alpha)),\gr(u_{\beta}(p^{m + \delta_\beta}x_\beta))\right] = \begin{cases}\gr(u_{\alpha + \beta}(c_{\alpha,\beta;1,1}p^{n + m + \delta_\alpha + \delta_\beta}x_\alpha x_\beta)), & \textnormal{if $\alpha+\beta \in \Phi$},\\ 0 & \textnormal{otherwise}.\end{cases}
\]
\end{prop}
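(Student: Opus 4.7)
The plan is to substitute the given elements into the commutator formula \eqref{commutator} and carefully track the $\omega$-values of the resulting factors, showing that only the $(i,j) = (1,1)$ term survives in the graded quotient $\gr_{n+m+(k+\ell)/h}(I)$.

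First, I would record a key identity coming from the hypotheses. Since $x_\alpha \in \cO_F^\times$, direct computation gives
$$\omega(u_\alpha(p^{n+\delta_\alpha}x_\alpha)) = (n+\delta_\alpha) + \textnormal{ht}(\alpha)/h = n + k/h,$$
so $\delta_\alpha h + \textnormal{ht}(\alpha) = k$, and analogously $\delta_\beta h + \textnormal{ht}(\beta) = \ell$. (Note this forces $\textnormal{ht}(\alpha) \in \{k, k-h\}$ according to the sign of $\alpha$, and likewise for $\beta$.)

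Next, I would apply the commutator formula:
$$[u_\alpha(p^{n+\delta_\alpha}x_\alpha),\,u_\beta(p^{m+\delta_\beta}x_\beta)] = \prod_{\substack{i,j>0\\ i\alpha+j\beta\in\Phi}} u_{i\alpha+j\beta}\bigl(c_{\alpha,\beta;i,j}\, p^{i(n+\delta_\alpha)+j(m+\delta_\beta)}\, x_\alpha^i x_\beta^j\bigr).$$
Since $p > h+1 \geq 3$, Lemma \ref{lem:boundconsq's}(1) gives $c_{\alpha,\beta;i,j} \in \cO_F^\times$, so the argument of the $(i,j)$-factor has $p$-valuation exactly $i(n+\delta_\alpha) + j(m+\delta_\beta)$. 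Using additivity of height together with the key identity, the $\omega$-value of this factor equals
$$i(n+\delta_\alpha) + j(m+\delta_\beta) + \textnormal{ht}(i\alpha+j\beta)/h = in + jm + (ik + j\ell)/h.$$
For $(i,j) = (1,1)$ this is exactly $n+m+(k+\ell)/h$; for any $(i,j) \neq (1,1)$ with $i,j \geq 1$, the difference
$$(i-1)n + (j-1)m + \bigl((i-1)k + (j-1)\ell\bigr)/h$$
is strictly positive because $n,m \geq 0$ and $k,\ell \geq 1$.

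Finally, I would pass to the graded quotient. All factors with $(i,j)\neq(1,1)$ lie in $I_{(n+m+(k+\ell)/h)^+}$ and thus vanish. If $\alpha+\beta \notin \Phi$, no $(1,1)$ term appears and the bracket is $0$. If $\alpha+\beta \in \Phi$, the $(1,1)$ factor $u_{\alpha+\beta}(c_{\alpha,\beta;1,1}\, p^{n+m+\delta_\alpha+\delta_\beta}\, x_\alpha x_\beta)$ has $\omega$-value exactly $n+m+(k+\ell)/h$ (by the same identity applied to $\alpha+\beta$, using $\textnormal{ht}(\alpha+\beta) = \textnormal{ht}(\alpha)+\textnormal{ht}(\beta)$), yielding the claimed formula.

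The proof is largely bookkeeping; the only genuine subtlety is the identity $\delta_\gamma h + \textnormal{ht}(\gamma) = k_\gamma$, which makes the $\delta$-shifts and height contributions combine cleanly. One additionally has to check, case-by-case on the signs of $\alpha,\beta,\alpha+\beta$, that $n+m+\delta_\alpha+\delta_\beta \geq \delta_{\alpha+\beta}$ so that the stated element actually lies in $I$; this is immediate except in the case $\alpha+\beta \in \Phi^-$, where it follows from the observation that if $\alpha+\beta$ is negative then at least one of $\alpha,\beta$ must be negative, giving $\delta_\alpha + \delta_\beta \geq 1 = \delta_{\alpha+\beta}$.
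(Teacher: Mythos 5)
Your proof is correct and follows essentially the same route as the paper's: apply the commutator formula, use $\textnormal{ht}(\alpha) = k - h\delta_\alpha$ (your identity $\delta_\alpha h + \textnormal{ht}(\alpha) = k$) to compute each factor's $\omega$-value as $in + jm + (ik + j\ell)/h$, and observe this strictly exceeds $n + m + (k+\ell)/h$ unless $i = j = 1$. The two minor additions you make — handling $\alpha + \beta \notin \Phi$ by noting all remaining factors vanish in the graded piece rather than invoking the standard fact that the commutator is then trivial, and the sanity check $n+m+\delta_\alpha+\delta_\beta \geq \delta_{\alpha+\beta}$ — are slightly more careful than the paper's proof but don't change the substance.
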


\begin{proof}
    We may assume $\alpha + \beta \in \Phi$, otherwise there is nothing to prove.  By the commutator formula \eqref{commutator}, we have
    \[
    \left[u_{\alpha}(p^{n + \delta_\alpha}x_\alpha),u_{\beta}(p^{m + \delta_\beta}x_\beta)\right] = \prod_{\substack{i,j > 0 \\ i\alpha + j\beta \in \Phi}}u_{i\alpha + j\beta}\left(c_{\alpha,\beta;i,j}p^{i(n + \delta_\alpha) + j(m + \delta_\beta)}x_\alpha^i x_\beta^j\right).
    \]
  We compute the $p$-valuation of a term $u_{i\alpha + j\beta}(c_{\alpha,\beta;i,j}p^{i(n + \delta_\alpha) + j(m + \delta_\beta)}x_\alpha^i x_\beta^j)$ appearing above. Note that the height of $\alpha$ is equal to $k - h\delta_\alpha$ (and similarly for $\beta$).  Then the height of $i\alpha + j\beta$ is $i(k - h\delta_\alpha) +j(\ell - h\delta_\beta)$, and thus
    \begin{eqnarray*}
    \omega\left(u_{i\alpha + j\beta}(c_{\alpha,\beta;i,j}p^{i(n + \delta_\alpha) + j(m + \delta_\beta)}x_\alpha^i x_\beta^j)\right) & = & i(n + \delta_\alpha) + j(m + \delta_\beta) + \frac{i(k - h\delta_\alpha) + j(\ell - h\delta_\beta)}{h} \\
    & = & in + jm + \frac{ik + j\ell}{h}
    \end{eqnarray*}
    (we are using that $\textnormal{val}_p(c_{\alpha,\beta;i,j}) = 0$ by Lemma \ref{lem:boundconsq's}).  However,  
    \[
    in + jm + \frac{ik + j\ell}{h} \geq n + m +\frac{k + \ell}{h},
    \]
    with equality if and only if $i = j =1$. Therefore the image of $[u_{\alpha}(p^{n + \delta_\alpha}x_\alpha),u_{\beta}(p^{m + \delta_\beta}x_\beta)]$ in $\gr_{n + m+\frac{k + \ell}{h}}(I)$ is as claimed.
\end{proof}

\begin{prop}
\label{prop:comm2}
    Fix $\alpha \in \Phi_k^+$.  Let $n,m \geq 0$, and choose $x_\alpha, x_{-\alpha} \in \cO_F^\times$, so that $\gr(u_\alpha(p^{n}x_\alpha))$ is an element of $\gr_{n + \frac{k}{h}}(I)$ and $u_{-\alpha}(p^{m + 1}x_{-\alpha})$ is an element of $\gr_{m + \frac{h-k}{h}}(I)$. Then, in $\gr_{n + m + 1}(I)$, we have
    \[
    \left[\gr(u_\alpha(p^{n}x_\alpha)), \gr(u_{-\alpha}(p^{m+1}x_{-\alpha}))\right] = \gr(\alpha^\vee(1 + p^{n + m +1}x_\alpha x_{-\alpha})).
    \]
\end{prop}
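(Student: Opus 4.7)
The plan is to reduce everything to an explicit matrix computation in $\mathbf{SL}_2$ via the morphism $\varphi_\alpha$, and then identify the result in $\gr_{n+m+1}(I)$ by means of the Iwahori factorization.

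First I would set $a := p^n x_\alpha$ and $b := p^{m+1}x_{-\alpha}$ and observe $ab = p^{n+m+1}x_\alpha x_{-\alpha} \in p\cO_F$, so $1+ab \in \cO_F^\times$. Since $\varphi_\alpha: \mathbf{SL}_{2/\cO_F}\to \mathbf{G}$ is a group homomorphism, the commutator $[u_\alpha(a), u_{-\alpha}(b)]$ is the image under $\varphi_\alpha$ of the corresponding $2\times 2$ commutator. A routine matrix calculation gives
\[
[u_\alpha(a), u_{-\alpha}(b)] = \varphi_\alpha\begin{pmatrix} 1 + ab + a^2b^2 & -a^2b \\ ab^2 & 1-ab \end{pmatrix}.
\]

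Next, since the top-left entry $t := 1+ab+a^2b^2$ lies in $1+p\cO_F \subset \cO_F^\times$, I can Iwahori-factor this matrix as
\[
\begin{pmatrix} 1 & 0 \\ y & 1 \end{pmatrix}\begin{pmatrix} t & 0 \\ 0 & t^{-1} \end{pmatrix}\begin{pmatrix} 1 & x \\ 0 & 1 \end{pmatrix}, \qquad x = -\tfrac{a^2b}{t},\ y = \tfrac{ab^2}{t}.
\]
Applying $\varphi_\alpha$ yields $[u_\alpha(a),u_{-\alpha}(b)] = u_{-\alpha}(y)\,\alpha^\vee(t)\, u_\alpha(x)$.

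The remaining task is to see that the two unipotent factors, as well as the ``error'' in the torus factor, are absorbed by $I_{(n+m+1)^+}$. For this I would compute $p$-valuations: $x \in p^{2n+m+1}\cO_F$ gives $\omega(u_\alpha(x)) \geq (2n+m+1) + k/h > n+m+1$ since $n\geq 0$ and $k\geq 1$; analogously $y \in p^{n+2m+2}\cO_F$ gives $\omega(u_{-\alpha}(y)) \geq (n+2m+2)+(h-k)/h > n+m+1$. Writing $u := ab = p^{n+m+1}x_\alpha x_{-\alpha}$, the identity $t = (1+u)\bigl(1 + u^2/(1+u)\bigr)$ shows
\[
\alpha^\vee(t)\,\alpha^\vee(1 + p^{n+m+1}x_\alpha x_{-\alpha})^{-1} = \alpha^\vee\!\bigl(1 + u^2/(1+u)\bigr) \in T_{n+m+2} \subseteq I_{(n+m+1)^+},
\]
since $u^2 \in p^{2(n+m+1)}\cO_F \subseteq p^{n+m+2}\cO_F$. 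Putting everything together modulo $I_{(n+m+1)^+}$ gives the claimed identity in $\gr_{n+m+1}(I)$.

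The arguments above are essentially bookkeeping once the $\mathbf{SL}_2$ reduction is made; the only mild subtlety is verifying that the quadratic correction $u^2/(1+u)$ in the torus part genuinely jumps into the next filtration step $T_{n+m+2}$, which relies crucially on $n,m \geq 0$. No more delicate argument is needed, since the integrality of the structure constants (not relevant here, as we are in a rank-one situation) and the unramifiedness of $F$ have already been absorbed into the definition of $\omega$.
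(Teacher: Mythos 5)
Your proof is correct and takes essentially the same route as the paper: reduce to $\mathbf{SL}_2$ via $\varphi_\alpha$, compute the commutator matrix explicitly, factor it as unipotent $\times$ torus $\times$ unipotent, and check via $p$-valuations that the unipotent factors and the quadratic torus correction all land in $I_{(n+m+1)^+}$. One harmless slip: $\omega(u_{-\alpha}(y)) = \textnormal{val}_p(y) + \textnormal{ht}(-\alpha)/h = (n+2m+2) - k/h = (n+2m+1) + (h-k)/h$, not $(n+2m+2)+(h-k)/h$, but the needed inequality $> n+m+1$ still holds since $m \geq 0$.
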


\begin{proof}
    Pulling back via the morphism $\varphi_{\alpha}$, we are reduced to a calculation in $\mathrm{SL}_2$ (or, equivalently, we may invoke \cite[Thm. 4.2.6]{conrad:reductive}).  Thus, we compute:
    \[
    \left[\begin{pmatrix}
        1 & p^{n}x_\alpha \\ 0 & 1
    \end{pmatrix}, \begin{pmatrix}
        1 & 0 \\ p^{m+1}x_{-\alpha} & 1
    \end{pmatrix}\right] = \begin{pmatrix}
        1+p^{n + m + 1}x_\alpha x_{-\alpha} + p^{2(n + m + 1)}x_\alpha^2 x_{-\alpha}^2 & -p^{2n + m + 1}x_\alpha^2 x_{-\alpha} \\ p^{n+ 2(m+1)}x_\alpha x_{-\alpha}^2 & 1 - p^{n + m + 1}x_\alpha x_{-\alpha}
    \end{pmatrix}.
    \]
    In order to calculate the $p$-valuation of the right-hand side, we factor it as follows:
   \begin{multline}
   \label{eqn:alpha-alphacomm}
    \begin{pmatrix} 1 & p^n x_\alpha\left(1 - (1 - p^{n + m + 1}x_\alpha x_{-\alpha})^{-1}\right) \\ 0 & 1 \end{pmatrix} \begin{pmatrix} (1 - p^{n + m + 1}x_\alpha x_{-\alpha})^{-1} & 0 \\ 0 & 1 - p^{n + m + 1}x_\alpha x_{-\alpha}\end{pmatrix} \\
    \cdot \begin{pmatrix} 1 & 0 \\p^{m + 1} x_{-\alpha}\left((1 - p^{n + m + 1}x_\alpha x_{-\alpha})^{-1} - 1\right) & 1\end{pmatrix}.
    \end{multline}
    Since 
\[
(1 - p^{n + m + 1}x_\alpha x_{-\alpha})^{-1} - 1 = \sum_{k \geq 1} p^{(n + m + 1)k}x_\alpha^k x_{-\alpha}^k,
\]
we see that after applying $\varphi_\alpha$, the left and right factors of $\varphi_\alpha(\eqref{eqn:alpha-alphacomm})$ will have high $p$-valuations, namely
\begin{eqnarray*}
\omega\Big(u_\alpha\left(p^n x_\alpha\left(1 - (1 - p^{n + m + 1}x_\alpha x_{-\alpha})^{-1}\right)\right)\Big) & = & 2n + m + 1 + \frac{k}{h}, \\
\omega\Big( u_{-\alpha}\left(p^{m + 1} x_{-\alpha}\left((1 - p^{n + m + 1}x_\alpha x_{-\alpha})^{-1} - 1\right)\right)\Big) & = & n + 2(m + 1) - \frac{k}{h}.
\end{eqnarray*}
Both of these quantities are strictly larger than $n + m + 1$, which shows that both the left and right factors of $\varphi_\alpha(\eqref{eqn:alpha-alphacomm})$ lie in $I_{(n + m + 1)^+}$.  Further, after applying $\varphi_\alpha$, the middle term of $\varphi_\alpha(\eqref{eqn:alpha-alphacomm})$ has $p$-valuation
\[\omega\left(\alpha^\vee(1 - p^{n + m + 1}x_\alpha x_{-\alpha})^{-1}\right) = n + m + 1.\]
Hence, in $\gr_{n + m + 1}(I)$, we have
\[[\gr(u_{\alpha}(p^n x_\alpha)),~ \gr(u_{-\alpha}(p^{m + 1} x_{-\alpha}))] = \gr\left(\alpha^\vee(1 - p^{n + m + 1}x_\alpha x_{-\alpha})^{-1}\right) = \gr\left(\alpha^\vee(1 + p^{n + m + 1}x_\alpha x_{-\alpha})\right),\]
and we are done.    
\end{proof}

The following result is not strictly necessary to deduce the structure $\gr(I)\otimes_{\bbF_p[P]}\bbF_p$ below, but we include it here for the sake of completeness.

\begin{prop}
\label{prop:comm3}
Fix $\alpha\in \Phi$ and $\lambda \in X_*(\mathbf{T})$ such that $p\nmid \langle\alpha,\lambda \rangle$.  Let $n\geq 0$, $m \geq 1$, and choose $x_\alpha, y \in \cO_F^\times$, so that $\gr(u_\alpha(p^{n + \delta_\alpha} x_\alpha))$ is an element of $\gr_{n + \frac{k}{h}}(I)$ for some $0 < k < h$ and $\gr(\lambda(1 + p^m y))$ is an element of $\gr_{m}(I)$.  Then, in $\gr_{n + m + \frac{k}{h}}(I)$, we have
\[
\left[\gr(\lambda(1 + p^m y)),~ \gr(u_\alpha(p^{n + \delta_\alpha} x_\alpha))\right] = \gr\left(u_{\alpha}(\langle \alpha,\lambda\rangle p^{n + \delta_\alpha + m} x_\alpha y)\right).
\]
\end{prop}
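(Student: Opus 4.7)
The strategy is to compute the commutator $[\lambda(1+p^m y),~ u_\alpha(p^{n+\delta_\alpha} x_\alpha)]$ directly in $I$ using the conjugation formula from the preliminaries, then identify its leading term via $\omega$-valuations.

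First, I would apply $tu_\alpha(x)t^{-1} = u_\alpha(\alpha(t)x)$ with $t = \lambda(1+p^m y)$, using $\alpha(\lambda(z)) = z^{\langle \alpha, \lambda \rangle}$ from the natural pairing of characters and cocharacters. Since $u_\alpha$ is a group homomorphism from $\mathbf{G}_a$, this yields
\[
[\lambda(1+p^m y),~ u_\alpha(p^{n+\delta_\alpha} x_\alpha)] = u_\alpha\Bigl(\bigl((1+p^m y)^{\langle \alpha, \lambda \rangle} - 1\bigr)\, p^{n+\delta_\alpha} x_\alpha\Bigr).
\]

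Next, I would use the binomial expansion
\[
(1+p^m y)^{\langle \alpha, \lambda \rangle} - 1 = \langle \alpha, \lambda \rangle\, p^m y + \sum_{j \geq 2} \binom{\langle \alpha, \lambda \rangle}{j} p^{jm} y^j.
\]
The hypothesis $p \nmid \langle \alpha, \lambda \rangle$ ensures the $j=1$ term has $p$-valuation exactly $m$, while the $j \geq 2$ terms have $p$-valuation at least $2m$. Using the additivity $u_\alpha(a+b) = u_\alpha(a) u_\alpha(b)$, I would split the commutator as
\[
u_\alpha\bigl(\langle \alpha, \lambda \rangle\, p^{n+\delta_\alpha+m} x_\alpha y\bigr) \cdot u_\alpha(\textnormal{remainder}),
\]
where the remainder argument lies in $p^{n+\delta_\alpha+2m}\cO_F$.

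Finally, I would verify the $\omega$-values in the two cases. If $\alpha \in \Phi^+$ (so $\delta_\alpha = 0$ and $\textnormal{ht}(\alpha) = k$), the leading term has $\omega$-value $n + m + \frac{k}{h}$, and the remainder has $\omega$-value at least $n + 2m + \frac{k}{h}$. If $\alpha \in \Phi^-$ (so $\delta_\alpha = 1$ and $\textnormal{ht}(\alpha) = k - h$), the leading term has $\omega$-value $(n+1+m) + \frac{k-h}{h} = n + m + \frac{k}{h}$, and the remainder has $\omega$-value at least $n + m + 1 + \frac{k}{h}$. In both cases the remainder lies in $I_{(n+m+\frac{k}{h})^+}$, which gives the desired identity in $\gr_{n+m+\frac{k}{h}}(I)$.

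There is essentially no obstacle: the computation reduces to a single application of the torus conjugation rule, avoiding the structure constants $c_{\alpha,\beta;i,j}$ entirely. The only point requiring care is checking that the asymmetric conventions of $\omega_+$ versus $\omega_-$ (the extra factor of $p$ for negative roots, compensated by the $h$-shift in height modulo $h$) produce the same graded degree in both cases, which is immediate from the arithmetic above.
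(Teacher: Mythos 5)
Your proposal is correct and follows essentially the same route as the paper's proof: apply the torus conjugation formula $tu_\alpha(x)t^{-1} = u_\alpha(\alpha(t)x)$ to reduce the commutator to a single root group element, expand $(1+p^m y)^{\langle\alpha,\lambda\rangle}-1$ binomially, and use the hypothesis $p\nmid\langle\alpha,\lambda\rangle$ to identify the leading term in the graded piece. Your explicit split of the argument into leading term plus higher-order remainder via additivity of $u_\alpha$ is a slightly more spelled-out version of the paper's computation of $\omega$ of the combined element, but the underlying argument is identical.
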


\begin{proof}
The conjugation action of $\mathbf{T}$ on the root space $\mathbf{U}_\alpha$ implies that 
\begin{eqnarray*}
[\lambda(1 + p^m y), u_\alpha(p^{n + \delta_\alpha} x_\alpha)] & = & \lambda(1 + p^m y)u_\alpha(p^{n + \delta_\alpha} x_\alpha)\lambda(1 + p^m y)^{-1}u_\alpha(p^{n + \delta_\alpha} x_\alpha)^{-1}\\
&  = & u_{\alpha}\left(p^{n + \delta_\alpha} x_\alpha(\alpha(\lambda(1 + p^m y)) - 1)\right)\\
& = & u_{\alpha}\left(p^{n + \delta_\alpha} x_\alpha((1 + p^m y)^{\langle \alpha, \lambda\rangle} - 1)\right),
\end{eqnarray*}
as elements of $I$.  The identity
\[
(1 + p^m y)^{\langle \alpha, \lambda\rangle} - 1 = \sum_{k \geq 1} \binom{\langle \alpha,\lambda\rangle}{k}p^{mk}y^k,
\]
implies that we have 
$$\omega \left(u_{\alpha}(p^{n + \delta_\alpha} x_\alpha((1 + p^m y)^{\langle \alpha, \lambda\rangle} - 1))\right) = n + \delta_\alpha + \frac{\hgt(\alpha)}{h} + m = n + \delta_\alpha + \frac{k - h\delta_\alpha}{h} + m = n + m + \frac{k}{h},$$
where we have used that $p$ does not divide $\langle \alpha,\lambda\rangle$.  This gives the claim.  
\end{proof}

Combining the above results, we obtain a presentation for the graded Lie algebra $\gr(I)$.  For the discussion that follows, we fix a basis $\cB$ of $X_*(\mathbf{Z})$.  

\begin{cor}
\label{cor:gr(I)pres}
A basis for the graded Lie algebra $\gr(I)$ is given by the following elements:
\begin{itemize}
\item $\gr(u_\alpha(p^{n + \delta_\alpha}[\xi]^r))$, where $\alpha \in \Phi$, $n \geq 0$ and $0 \leq r \leq f - 1$, and
\item $\gr(\lambda(1 + p^m[\xi]^r))$, where $\lambda \in \cB \cup \Delta^\vee$, $m \geq 1$ and $0 \leq r \leq f - 1$.
\end{itemize}
The Lie brackets between these elements are determined by Propositions \ref{prop:comm1}, \ref{prop:comm2}, and \ref{prop:comm3}.
\end{cor}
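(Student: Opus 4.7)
The plan is to combine the graded-piece descriptions from Sections 3.2 and 3.3 with the three bracket computations above. Since by construction $\gr(I) = \bigoplus_{\nu \in \frac{1}{h}\bbZ_{\geq 1}} \gr_\nu(I)$, a basis for $\gr(I)$ is obtained by assembling bases for each $\gr_\nu(I)$. For non-integer $\nu = n + k/h$ with $k \in \{1, \ldots, h - 1\}$, equation \eqref{eqn:basis-nonintegral} directly supplies the claimed unipotent basis elements $\gr(u_\alpha(p^{n + \delta_\alpha}[\xi]^r))$ for $\alpha \in \Phi_k$ and $0 \leq r \leq f - 1$ (the factor $\delta_\alpha$ encodes the $p^{n+1}$ shift for negative roots).

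For integer $\nu = n \geq 1$, equation \eqref{eq: basis-integral} gives $\gr_n(I) \cong T_n/T_{n+1}$, so I would invoke Lemma \ref{lem:boundconsq's}(2) to obtain the topological direct-product decomposition $T_n \cong Z_n \times \langle \alpha^\vee(1 + p^n\cO_F)\rangle_{\alpha \in \Delta}$. Expanding parameters in the $\bbF_p$-basis $\{[\xi]^r\}_{0 \leq r \leq f - 1}$ of $k_F$ then yields the basis elements $\gr(\lambda(1 + p^n[\xi]^r))$ with $\lambda \in \cB \cup \Delta^\vee$, as required.

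To verify that the three propositions determine all brackets, bilinearity reduces us to brackets between basis elements, split as follows: two unipotent generators with $\beta \neq \pm \alpha$ (Proposition \ref{prop:comm1}), with $\beta = -\alpha$ (Proposition \ref{prop:comm2}), or with $\beta = \alpha$ (zero, since $u_\alpha$ is a group homomorphism from $F$); two torus generators (zero, since $T$ is abelian); and mixed brackets falling under Proposition \ref{prop:comm3}. The main point of care, and essentially the only obstacle, lies in the hypothesis $p \nmid \langle \alpha, \lambda \rangle$ of Proposition \ref{prop:comm3}: for $\lambda \in \cB \subset X_*(\mathbf{Z})$ the pairing is identically zero since roots are trivial on the center, while for $\lambda = \beta^\vee$ with $\beta \in \Delta$ it is a Cartan integer bounded by $3$ in absolute value, which the standing hypothesis $p > h + 1$ keeps coprime to $p$ whenever nonzero; in the degenerate case $\langle \alpha, \lambda \rangle = 0$ the bracket vanishes directly from the conjugation identity $\lambda(z) u_\alpha(x) \lambda(z)^{-1} = u_\alpha(z^{\langle \alpha, \lambda\rangle} x)$. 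No new computation beyond these observations is required.
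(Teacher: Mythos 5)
Your proof is correct and follows essentially the same route as the paper: assemble the basis from the graded-piece computations of Section 3.2 together with the decomposition of $T_n$ from Lemma \ref{lem:boundconsq's}(2), and then dispose of the only nontrivial gap (the hypothesis $p \nmid \langle\alpha,\lambda\rangle$ in Proposition \ref{prop:comm3}) by observing that $\langle\alpha,\lambda\rangle$ is a Cartan integer of absolute value at most $3 < p$, so $p \mid \langle\alpha,\lambda\rangle$ forces $\langle\alpha,\lambda\rangle = 0$ and the bracket vanishes. You spell out the remaining trivial cases ($\alpha = \beta$, two torus generators) slightly more explicitly than the paper does, but the argument is the same.
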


\begin{proof}
It only remains to calculate the bracket $[\gr(\lambda(1 + p^m [\xi]^{r})),~ \gr(u_\alpha(p^{n + \delta_\alpha} [\xi]^s))]$ when $p$ divides $\langle \alpha, \lambda\rangle$.  However, when $\lambda \in \Delta^\vee$, the assumptions $p > h + 1$ and $p | \langle \alpha, \lambda\rangle$ imply $\langle \alpha, \lambda\rangle = 0$, while if $\lambda \in \cB$, then $\langle \alpha, \lambda\rangle = 0$ follows from centrality of $\mathbf{Z}$.  Therefore, $\lambda(1 + p^m [\xi]^{r})$ commutes with $u_\alpha(p^{n + \delta_\alpha} [\xi]^s)$, and the desired bracket is 0.
\end{proof}

In fact, Propositions \ref{prop:comm1}, \ref{prop:comm2}, and \ref{prop:comm3} give us a bit more: namely, that the Lie bracket on $\gr(I)$ is actually $k_F$-bilinear.  Hence, $\gr(I)$ becomes a graded Lie algebra over $k_F$.  More precisely, if we let $\fI$ denote the $\bbF_p$-span in $\gr(I)$ of $\gr(\lambda(1 + p^m))$ for $\lambda \in \cB \cup \Delta^\vee$ and $m \geq 1$, and the elements $\gr(u_\alpha(p^{n + \delta_\alpha}))$ for $\alpha \in \Phi$ and $n \geq 0$, then we we get
$$\gr(I) \cong k_F \otimes_{\bbF_p}\fI$$
as graded $k_F$-Lie algebras.

\section{\texorpdfstring{The universal enveloping algebra of $\overline{\gr(I)}$}{The universal enveloping algebra of \overline{gr(I)}}}

In this section, we write down a presentation of the graded Lie algebra $\overline{\gr(I)} := \gr(I)\otimes_{\bbF_p[P]}\bbF_p$ in terms of a basis and Lie bracket relations.  We use this presentation to describe the universal enveloping algebra $U_{\bbF_p}(\overline{\gr(I)})$ and calculate its maximal commutative quotient.  In what follows, given an element $i \in I \smallsetminus\{1\}$, we use the notation $\overline{i}$ to denote $\gr(i) \otimes 1 \in \gr(I) \otimes_{\bbF_p[P]} \bbF_p = \overline{\gr(I)}$.

\subsection{\texorpdfstring{Presentation of $\overline{\gr(I)}$}{Presentation of \overline{gr(I)}}}

Note first that, by the calculation of the $P$ operator in Subsection \ref{subsec:P-operator}, we have an isomorphism of $\bbF_p$- (or $k_F$-)vector spaces:
\[
\overline{\gr(I)} = \gr(I)\otimes_{\bbF_p[P]}\bbF_p\cong \gr_{\frac{1}{h}}(I)\oplus\gr_{\frac{2}{h}}(I)\oplus...\oplus\gr_{\frac{h-1}{h}}(I)\oplus\gr_1(I).
\]
The above decomposition gives the grading on the Lie algebra $\overline{\gr(I)}$.  Using Corollary \ref{cor:gr(I)pres}, we obtain the following presentation:

\begin{prop}
\label{prop:overline-gr(I)-pres}
A basis for the graded Lie algebra $\overline{\gr(I)} = \gr(I)\otimes_{\bbF_p[P]}\bbF_p$ is given by the following elements:
\begin{itemize}
\item $\overline{u_\alpha(p^{\delta_\alpha}[\xi]^r)}$, where $\alpha \in \Phi$ and $0 \leq r \leq f - 1$, and
\item $\overline{\lambda(1 + p[\xi]^r)}$, where $\lambda \in \cB \cup \Delta^\vee$ and $0 \leq r \leq f - 1$.
\end{itemize}
The elements $\overline{\lambda(1 + p[\xi]^r)}$ are central, and the Lie brackets between two elements of the form $\overline{u_{\alpha}(p^{\delta_\alpha}[\xi]^r)}$ are determined by Propositions \ref{prop:comm1} and \ref{prop:comm2}.  In particular, if $\alpha,\beta \in \Phi^+$, then we have
$$\left[\overline{u_{\alpha}([\xi]^r)},~\overline{u_{\beta}([\xi]^s)}\right] = \begin{cases}\overline{u_{\alpha + \beta}(c_{\alpha,\beta;1,1}[\xi]^{r + s})}, & \textnormal{if $\alpha+\beta \in \Phi$},\\ 0 & \textnormal{otherwise},\end{cases}$$
while if $\alpha,\beta \in \Phi^-$, then we have
$$\left[\overline{u_{\alpha}(p[\xi]^r)},~\overline{u_{\beta}(p[\xi]^s)}\right] = 0.$$
\end{prop}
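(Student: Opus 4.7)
\bigskip

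\noindent\textbf{Proof proposal.} The plan is to combine the computations of Section 3 in a bookkeeping argument, passing everything through the quotient by $P$. The three assertions (basis, centrality of the torus classes, and the bracket formulas) are proved separately, and the only substantive point is keeping track of when a bracket in $\gr(I)$ lies in the image of $P$ and hence vanishes in $\overline{\gr(I)}$.

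For the basis, I would use the $P$-isomorphism computation from Subsection \ref{subsec:P-operator}: the operator $P$ restricts to an $\bbF_p$-linear isomorphism $\gr_\nu(I) \xrightarrow{\sim} \gr_{\nu+1}(I)$ for every $\nu \in \frac{1}{h}\bbZ_{\geq 1}$. Since the filtration only jumps at indices in $\frac{1}{h}\bbZ_{\geq 1}$ and there is no graded piece at $\nu = 0$, the image $P\gr(I) = \bigoplus_{\nu \geq 1 + \frac{1}{h}} \gr_\nu(I)$, and therefore
\[
\overline{\gr(I)} \;=\; \gr(I)/P\gr(I) \;\cong\; \bigoplus_{k=1}^{h-1} \gr_{k/h}(I) \;\oplus\; \gr_1(I)
\]
as graded $\bbF_p$-vector spaces. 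The bases of the non-integral pieces were exhibited in \eqref{eqn:basis-nonintegral}, and together they contribute exactly the classes $\overline{u_\alpha(p^{\delta_\alpha}[\xi]^r)}$ for $\alpha \in \Phi$ and $0 \leq r \leq f - 1$. The integral piece $\gr_1(I) \cong T_1/T_2$ is analyzed using Lemma \ref{lem:boundconsq's}(2), which gives the decomposition $T_1 \cong Z_1 \times \langle \alpha^\vee(1 + p\cO_F)\rangle_{\alpha \in \Delta}$; reducing modulo $T_2$ and running over the generator $[\xi]^r$ of $\cO_F/p\cO_F$ yields the claimed basis indexed by $\lambda \in \cB \cup \Delta^\vee$ and $0 \leq r \leq f-1$.

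For centrality of $\overline{\lambda(1 + p[\xi]^r)}$, I would separate two cases. Two torus classes commute trivially since $T_1$ is abelian. For a mixed bracket with $\overline{u_\alpha(p^{\delta_\alpha}[\xi]^s)}$, if $p \mid \langle \alpha, \lambda \rangle$ then the argument in the proof of Corollary \ref{cor:gr(I)pres} (using $p > h+1$ and centrality of $\bZ$) forces $\langle \alpha, \lambda \rangle = 0$, so the underlying elements commute in $I$. Otherwise Proposition \ref{prop:comm3} (taking $n = 0$, $m = 1$) identifies the bracket in $\gr(I)$ with $\gr(u_\alpha(\langle \alpha, \lambda\rangle p^{\delta_\alpha + 1}[\xi]^{r+s}))$, which equals $P\bigl(\gr(u_\alpha(\langle \alpha, \lambda\rangle p^{\delta_\alpha}[\xi]^{r+s}))\bigr)$ by the description of $P$ in Subsection \ref{subsec:P-operator}, hence vanishes in $\overline{\gr(I)}$.

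The bracket formulas between two unipotent basis elements then follow from Propositions \ref{prop:comm1} and \ref{prop:comm2}, applied with $n = m = 0$, followed by reduction modulo $P$. If $\alpha, \beta \in \Phi^+$ and $\alpha + \beta \in \Phi$, then $\alpha + \beta \in \Phi^+$ with $\textnormal{ht}(\alpha + \beta) \leq h - 1$, so the output $\gr(u_{\alpha + \beta}(c_{\alpha,\beta;1,1}[\xi]^{r+s}))$ of Proposition \ref{prop:comm1} already represents a basis element of $\overline{\gr(I)}$; if $\alpha + \beta \notin \Phi$, the bracket is zero in $\gr(I)$ itself. If instead $\alpha, \beta \in \Phi^-$ and $\alpha + \beta \in \Phi$, Proposition \ref{prop:comm1} produces $\gr(u_{\alpha + \beta}(c_{\alpha,\beta;1,1} p^{\delta_\alpha + \delta_\beta}[\xi]^{r+s})) = \gr(u_{\alpha + \beta}(c_{\alpha,\beta;1,1} p^2[\xi]^{r+s}))$, which carries one extra factor of $p$ and is therefore $P\bigl(\gr(u_{\alpha + \beta}(c_{\alpha,\beta;1,1} p[\xi]^{r+s}))\bigr)$, vanishing in $\overline{\gr(I)}$; again the non-root case gives zero directly. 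The only remaining unipotent bracket (between $u_\alpha$ and $u_{-\alpha}$) is covered by Proposition \ref{prop:comm2}. The main obstacle, if any, is simply the care needed to track which outputs of the Section 3 commutator formulas are ``basis-level'' (in degree $\leq 1$) versus lying in the image of $P$; no genuinely new computation is required.
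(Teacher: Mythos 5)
Your proposal is correct and follows the same route the paper implicitly takes: the paper records the decomposition $\overline{\gr(I)} \cong \bigoplus_{k=1}^{h-1}\gr_{k/h}(I)\oplus\gr_1(I)$ just before the proposition and treats the statement as an immediate consequence of Corollary \ref{cor:gr(I)pres}, while you have simply made the bookkeeping explicit — in particular the height counts showing that $\Phi^+\times\Phi^+$ brackets land in degree $< 1$ while $\Phi^-\times\Phi^-$ and torus--unipotent brackets land in degree $> 1$ and hence in $P\gr(I)$.
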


\subsection{The universal enveloping algebra}

Given the above result, we may describe $U_{\F_p}(\overline{\gr(I}))$ using the Poincar\'e--Birkhoff--Witt Theorem: we have
\begin{equation}
\label{eqn:univ-env-alg-pres}
    U_{\bbF_p}(\overline{\gr(I)}) \cong \FF_p\left\langle \overline{u_{\gamma}(p^{\delta_\gamma}[\xi]^r)} ,~ \overline{\lambda(1+p[\xi]^r)} ~:~ \gamma\in \Phi, \lambda \in \cB \cup \Delta^\vee,  0\leq r\leq f-1 \right\rangle/\sim,
\end{equation}
where $\sim$ represents the commutation relations determined by the Lie bracket in Proposition \ref{prop:overline-gr(I)-pres}, namely for $i,j \in I$, we have $\overline{i}\cdot \overline{j} - \overline{j}\cdot \overline{i} = [\overline{i}, \overline{j}] = [\gr(i),\gr(j)]\otimes 1$ in $U_{\F_p}(\overline{\gr(I)})$.  The notation above denotes the non-commutative polynomial algebra over $\FF_p$ generated by the specified elements (i.e., the tensor algebra of $\overline{\gr(I)}$), where the relations $\sim$ dictate at what cost we can allow the symbols to commute.  Note also that the grading on $\overline{\gr(I)}$ induces a $\frac{1}{h}\bbZ_{\geq 0}$-grading on $U_{\bbF_p}(\overline{\gr(I)})$.  Combining equation \eqref{eqn:univ-env-alg-pres} with Corollary \ref{lazardcor} yields a presentation for $\gr_\omega(\mathbb{F}_p\llbracket I\rrbracket)$.  

\begin{prop}
\label{prop: Minimal Generating Set For Universal Enveloping Algebra}
    The set 
    \begin{equation}
    \label{eqn:min-gen-set}
    \left\{\overline{u_{\gamma}(p^{\delta_\gamma}[\xi]^r)},~\overline{\lambda(1 + p[\xi]^r)}~:~ \gamma\in\Phi_1,\lambda \in \cB, 0 \leq r \leq f - 1\right\}
    \end{equation}
    is a minimal generating set for $U_{\bbF_p}(\overline{\gr(I)})$.  
\end{prop}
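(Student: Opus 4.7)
The plan is to prove the statement by showing (a) the set \eqref{eqn:min-gen-set} generates $U_{\bbF_p}(\overline{\gr(I)})$ as an $\bbF_p$-algebra, and (b) that no proper subset does.  By Corollary \ref{lazardcor} (or the PBW theorem), $U_{\bbF_p}(\overline{\gr(I)})$ is generated as an $\bbF_p$-algebra by $\overline{\gr(I)}$, so for (a) it suffices to show that every basis element of $\overline{\gr(I)}$ listed in Proposition \ref{prop:overline-gr(I)-pres} lies in the Lie subalgebra generated by \eqref{eqn:min-gen-set}.  Observe that $\Phi_1 = \Delta \cup \{-\theta\}$, where $\theta$ is the highest root, so the basis elements absent from \eqref{eqn:min-gen-set} are $\overline{u_\alpha([\xi]^r)}$ for $\alpha \in \Phi^+ \smallsetminus \Delta$, $\overline{u_\beta(p[\xi]^r)}$ for $\beta \in \Phi^- \smallsetminus \{-\theta\}$, and $\overline{\alpha^\vee(1+p[\xi]^r)}$ for $\alpha \in \Delta$.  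Each is produced as follows: for $\alpha \in \Phi^+$ of height $\geq 2$, write $\alpha = \alpha' + \alpha_i$ with $\alpha_i \in \Delta$ and $\alpha' \in \Phi^+$ of smaller height; Proposition \ref{prop:comm1} then gives
\[
[\overline{u_{\alpha'}([\xi]^r)}, \overline{u_{\alpha_i}([\xi]^s)}] = c_{\alpha',\alpha_i;1,1}\,\overline{u_\alpha([\xi]^{r+s})},
\]
with $c_{\alpha',\alpha_i;1,1}$ a unit modulo $p$ by Lemma \ref{lem:boundconsq's}(1); induction on height covers all of $\Phi^+$.  A symmetric argument starting from $\overline{u_{-\theta}(p[\xi]^r)}$ and iteratively bracketing with simple-root elements recovers the remaining $\overline{u_\beta(p[\xi]^r)}$.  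Finally, Proposition \ref{prop:comm2} supplies $[\overline{u_\alpha([\xi]^r)}, \overline{u_{-\alpha}(p[\xi]^s)}] = \overline{\alpha^\vee(1+p[\xi]^{r+s})}$ for $\alpha \in \Delta$, producing the missing simple-coroot elements.  At each step, choosing $r+s$ equal to the desired exponent $t \in \{0,\ldots,f-1\}$ produces all basis vectors with the required powers of $[\xi]$.

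For (b), we use the grading on $\overline{\gr(I)}$: the degree-$\nu$ component of the Lie subalgebra generated by a subset $S$ is the $\bbF_p$-span of iterated brackets among elements of $S$ whose degrees sum to $\nu$.  Since $1/h$ is the minimum positive degree in $\overline{\gr(I)}$, no bracket can land there, forcing the degree-$1/h$ part of any generating set to exhaust $\gr_{1/h}(I)$; this accounts for the necessity of every degree-$1/h$ element of \eqref{eqn:min-gen-set}.  For the degree-$1$ generators, Lemma \ref{lem:boundconsq's}(2) yields the $\bbF_p$-decomposition
\[
\gr_1(I) = T_1/T_2 \cong Z_1/Z_2 \oplus \bigoplus_{\alpha \in \Delta}\alpha^\vee(1+p\cO_F)/\alpha^\vee(1+p^2\cO_F),
\]
where the first summand is $\bbF_p$-spanned by the $\overline{\lambda(1+p[\xi]^r)}$ for $\lambda \in \cB$ and the second by the $\overline{\alpha^\vee(1+p[\xi]^r)}$ for $\alpha \in \Delta$.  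A short check shows Proposition \ref{prop:comm1} never contributes a nonzero degree-$1$ output among basis elements of $\overline{\gr(I)}$: if two such basis elements have degrees summing to $1$, then $\mathrm{ht}(\alpha+\beta) \in \{-h, 0, h\}$, so either $\alpha + \beta = 0$ (excluded from Proposition \ref{prop:comm1}) or $\alpha+\beta \notin \Phi$.  Thus all degree-$1$ brackets originate from Proposition \ref{prop:comm2} and produce elements $\overline{\alpha^\vee(1+p[\xi]^t)}$ for $\alpha \in \Phi^+$, all of which lie in the coroot summand.  The $Z_1/Z_2$ summand is therefore inaccessible via brackets, and each $\overline{\lambda(1+p[\xi]^r)}$ with $\lambda \in \cB$ is indispensable.

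The main obstacle is verifying that, under the decomposition in Lemma \ref{lem:boundconsq's}(2), the elements $\overline{\alpha^\vee(1+p[\xi]^t)}$ for \emph{all} $\alpha \in \Phi^+$ (not only $\alpha \in \Delta$) lie in the coroot summand.  This reduces to expressing $\alpha^\vee \in X_*(\mathbf{T})$ as a $\bbZ$-combination $\sum_j n_j \alpha_j^\vee$ of simple coroots (a standard fact about the dual root system $\Phi^\vee$), and translating this into the $\bbF_p$-linear identity
\[
\overline{\alpha^\vee(1+p[\xi]^t)} = \sum_j n_j\,\overline{\alpha_j^\vee(1+p[\xi]^t)}
\]
in $T_1/T_2$.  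The latter follows because $T_1/T_2$ is abelian and $(1+p[\xi]^t)^{n_j} \equiv 1 + n_j p[\xi]^t \pmod{p^2\cO_F}$, which gives $\alpha_j^\vee((1+p[\xi]^t)^{n_j}) \equiv \alpha_j^\vee(1+n_j p[\xi]^t) \pmod{T_2}$, and hence the desired identity in $\gr_1(I)$.
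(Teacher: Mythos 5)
Your proof is correct, and it reaches the same conclusion by a route that agrees with the paper on the generation half but differs on the minimality half.

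For generation, your argument is essentially the paper's: induct on height, using Proposition \ref{prop:comm1} (with unit structure constants by Lemma \ref{lem:boundconsq's}) to climb up the positive roots, Lemma \ref{lem: root-addition} (which is what your "symmetric argument" for negative roots silently invokes --- you should cite it, as the paper does) to climb from $-\theta$ through the negative roots, and Proposition \ref{prop:comm2} to reach the simple coroots. The only cosmetic difference is that you organize the induction by height rather than by the index $k$ of $\Phi_k$, which amounts to the same thing.

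For minimality of the $\cB$-elements, you take a genuinely different and more explicit route. The paper's proof is terse: it asserts that if $\overline{\lambda(1+p[\xi]^r)}$ were generated by $\gr_{1/h}(I)$, then $\lambda(1+p[\xi]^r)$ would lie in the derived subgroup of $G$, contradicting Lemma \ref{lem:boundconsq's}. You instead compute the degree-$1$ piece of the derived ideal $[\overline{\gr(I)},\overline{\gr(I)}]$ directly: you observe that Proposition \ref{prop:comm1} contributes nothing in degree $1$ (since $\mathrm{ht}(\alpha)+\mathrm{ht}(\beta)\in\{-h,0,h\}$ is never the height of a root), that Proposition \ref{prop:comm3} is out of range, and that Proposition \ref{prop:comm2} contributes only $\overline{\alpha^\vee(1+p[\xi]^t)}$ for $\alpha\in\Phi^+$; you then verify via the $\bbZ$-linear expansion of $\alpha^\vee$ in simple coroots and the congruence $(1+p[\xi]^t)^{n}\equiv 1+np[\xi]^t \pmod{p^2}$ that these all land in the coroot summand of the decomposition $\gr_1(I)\cong Z_1/Z_2\oplus\bigoplus_{\alpha\in\Delta}\alpha^\vee(1+p\cO_F)/\alpha^\vee(1+p^2\cO_F)$ from Lemma \ref{lem:boundconsq's}(2). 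This is a fully concrete substitute for the paper's derived-subgroup reasoning, and arguably more self-contained. The one thing worth spelling out is the reduction you use implicitly at the start of (b): since $\overline{\gr(I)}$ is graded with degrees in $\{1/h,\dots,1\}$ and brackets strictly increase degree, it is graded-nilpotent, so a set of homogeneous elements generates $U_{\bbF_p}(\overline{\gr(I)})$ as an algebra if and only if its image spans $\overline{\gr(I)}/[\overline{\gr(I)},\overline{\gr(I)}]$; this is why your degree-by-degree analysis of the commutator ideal settles minimality.
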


\begin{proof}
    We first prove that \eqref{eqn:min-gen-set} generates the elements $\{\overline{u_{\gamma}(p^{\delta_\gamma}[\xi]^r)}:\gamma\in\Phi_k, 0\leq r\leq f-1\}$ for $1\leq k\leq h-1$, and each of the $\{\overline{\alpha^\vee(1+p[\xi]^r)}:\alpha\in\Delta, 0\leq r\leq f-1\}$.

    We proceed by induction on $k$, the base case $k=1$ following from the definition. Thus, we assume that we can generate all the $\{\overline{u_{\gamma}(p^{\delta_\gamma}[\xi]^r)}:\gamma\in\Phi_i, 0\leq r\leq f-1\}$ for every $1\leq i < k \leq h-1$ from our claimed generating set. We show that we can also obtain all the $\{\overline{u_{\gamma}(p^{\delta_\gamma}[\xi]^r)}:\gamma\in\Phi_k, 0\leq r\leq f-1\}$. 
    
    Suppose first that $\gamma\in\Phi_k^+$, and write $\gamma = \alpha + \beta$ for some $\alpha\in\Phi_1^+$ and $\beta \in \Phi_{k-1}^+$. Then by applying Proposition \ref{prop:comm1}, we get
    \[
    \left[\gr(u_{\alpha}(1)),~\gr(u_{\beta}([\xi]^r))\right] = \gr(u_{\gamma}(c_{\alpha,\beta;1,1}[\xi]^r)) = c_{\alpha,\beta;1,1}\gr(u_{\gamma}([\xi]^r))
    \]
    (for the final equality, recall that $\gr(I)$ is an $\bbF_p$-vector space).  Thus, we see by induction that $\overline{u_\gamma([\xi]^r)}$ can be obtained from our claimed generating set in $U_{\bbF_p}(\overline{\gr(I)})$.  

Suppose next that $\gamma\in \Phi_k^-$.  Then $\gamma$ has height $k-h$ and we can write $\gamma = \alpha + \beta$ for some $\alpha\in \Phi_1^+$ and $\beta\in \Phi_{k-1}^-$ (see Lemma \ref{lem: root-addition} below).  Applying Proposition \ref{prop:comm1}, we get 
    \[
    \left[\gr(u_{\alpha}(1)),~ \gr(u_{\beta}(p[\xi]^r))\right] = \gr(u_{\gamma}(c_{\alpha,\beta;1,1}p[\xi]^r)) = c_{\alpha,\beta;1,1}\gr(u_{\gamma}(p[\xi]^r)).
    \]
    As above, we see that $\overline{u_\gamma(p[\xi]^r)}$ can be obtained from our claimed generating set.  Thus, by induction we conclude that we can obtain all elements of the form $\{\overline{u_{\gamma}(p^{\delta_\gamma}[\xi]^r)}:\gamma\in\Phi_k, 0\leq r\leq f-1\}$ for any $1\leq k\leq h-1$ from our claimed generating set.

  For the elements $\{\overline{\alpha^\vee(1+p[\xi]^r)}:\alpha\in\Delta, 0\leq r\leq f-1\}$, Proposition \ref{prop:comm2} implies
    \[
    \left[\gr(u_\alpha(1)),~\gr(u_{-\alpha}(p[\xi]^r))\right] = \gr(\alpha^\vee(1+p[\xi]^r)).
    \]
Since $\overline{u_{-\alpha}(p[\xi]^r)}$ can be obtained from our claimed generating set by the previous paragraphs, we see that the same is true of $\overline{\alpha^\vee(1 + p[\xi]^r)}$.

Finally, we justify that our generating set is minimal.  Since $U_{\bbF_p}(\overline{\gr(I)})$ is $\frac{1}{h}\mathbb{Z}_{\geq 0}$-graded, we see that (the image in $\overline{\gr(I)}$ of) an $\mathbb{F}_p$-basis for $\gr_{1/h}(I)$ must be contained in a minimal generating set.  Note also that no element $\overline{\lambda(1 + p[\xi]^r)}$ ($\lambda \in \cB$) may be obtained from the elements of $\gr_{1/h}(I)$; if this was so, then $\lambda(1 + p[\xi]^r)$ would lie in the derived subgroup of $G$, contradicting Lemma \ref{lem:boundconsq's}.
\end{proof}

We now compute the largest commutative quotient of $U_{\bbF_p}(\overline{\gr(I)})$:  

\begin{prop}
    \label{prop:largest-comm-quot}
    The largest commutative quotient of $U_{\bbF_p}(\overline{\gr(I)})$ is isomorphic to the polynomial algebra
    $$\FF_p\left[\overline{u_{\gamma}(p^{\delta_{\gamma}}[\xi]^r)},~ \overline{\lambda(1 + p[\xi]^r)}~:~ \gamma\in\Phi_1,\lambda \in \cB, 0\leq r\leq f-1\right].$$
\end{prop}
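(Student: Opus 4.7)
The plan is to identify the largest commutative quotient abstractly and then compute it using the structure of the Lie algebra $\overline{\gr(I)}$. For any Lie algebra $\fg$ over a field $k$, the two-sided ideal of $U_k(\fg)$ generated by all commutators coincides with the left ideal generated by the derived subalgebra $[\fg,\fg] \subset \fg$: for $x,y \in \fg$ we have $xy - yx = [x,y]_\fg \in \fg$, and the Leibniz rule reduces commutators of higher-degree elements to this case. Hence the largest commutative quotient is
\[
U_k(\fg)^{\mathrm{ab}} \cong U_k(\fg/[\fg,\fg]) \cong S_k(\fg/[\fg,\fg]),
\]
the polynomial ring on any $k$-basis of $\fg/[\fg,\fg]$. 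Applying this with $\fg = \overline{\gr(I)}$, it suffices to show that the set
\[
\mathcal{S} := \left\{\overline{u_\gamma(p^{\delta_\gamma}[\xi]^r)},~\overline{\lambda(1 + p[\xi]^r)}~:~ \gamma \in \Phi_1,~\lambda \in \cB,~0 \leq r \leq f-1\right\}
\]
descends to a basis of the quotient $\overline{\gr(I)}/[\overline{\gr(I)},\overline{\gr(I)}]$.

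To see that $\mathcal{S}$ spans the quotient, I will show that every remaining basis vector of $\overline{\gr(I)}$ lies in $[\overline{\gr(I)},\overline{\gr(I)}]$. For $\overline{u_\gamma(p^{\delta_\gamma}[\xi]^r)}$ with $\gamma \in \Phi \smallsetminus \Phi_1$, I repeat the inductive argument used in the proof of Proposition \ref{prop: Minimal Generating Set For Universal Enveloping Algebra}: decompose $\gamma = \alpha + \beta$ with $\alpha$ a simple root and $\beta$ a root of appropriately lower height (by Lemma \ref{lem: root-addition}), and apply Proposition \ref{prop:comm1} together with the nonvanishing of $c_{\alpha,\beta;1,1}$ modulo $p$ from Lemma \ref{lem:boundconsq's}. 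For $\overline{\alpha^\vee(1+p[\xi]^r)}$ with $\alpha \in \Delta$, Proposition \ref{prop:comm2} yields this element as $[\overline{u_\alpha([\xi]^0)},\overline{u_{-\alpha}(p[\xi]^r)}]$ directly.

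To see that $\mathcal{S}$ is linearly independent in the quotient, I will exploit the $\frac{1}{h}\bbZ_{\geq 0}$-grading on $\overline{\gr(I)}$, with respect to which $[\overline{\gr(I)},\overline{\gr(I)}]$ is a graded subspace. In degree $\frac{1}{h}$, every nontrivial bracket lands in degree $\geq \frac{2}{h}$, so the intersection with $\gr_{1/h}$ is zero and the elements $\overline{u_\gamma(p^{\delta_\gamma}[\xi]^r)}$ for $\gamma \in \Phi_1$ remain independent. In degree $1$, the degree-$1$ piece of $\overline{\gr(I)}$ is $T_1/T_{1^+}$, which by Lemma \ref{lem:boundconsq's}(2) decomposes (as an $\bbF_p$-vector space, using that the relevant index is prime to $p$) into the direct sum of the central part spanned by $\{\overline{\lambda(1+p[\xi]^r)}\}_{\lambda \in \cB,\, 0 \le r \le f-1}$ and the coroot part spanned by $\{\overline{\alpha^\vee(1+p[\xi]^r)}\}_{\alpha \in \Delta,\, 0 \le r \le f-1}$. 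An inspection of Propositions \ref{prop:comm1} and \ref{prop:comm2} shows that the only brackets landing in degree $1$ come from pairings of a root with its negative and lie entirely in the coroot summand; no bracket can produce a nonzero element in the central summand. Hence the elements $\overline{\lambda(1+p[\xi]^r)}$ for $\lambda \in \cB$ remain independent in the quotient.

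The main obstacle is the degree-$1$ analysis: one must rule out that brackets from Proposition \ref{prop:comm2} — which a priori yield $\overline{\alpha^\vee(1+p[\xi]^r)}$ for \emph{any} $\alpha \in \Phi^+$, not just $\alpha \in \Delta$ — produce any component in the central subspace $Z_1/Z_{1^+}$. This reduces to the fact that every coroot $\alpha^\vee$ lies in the $\bbZ$-span $\bigoplus_{\alpha' \in \Delta}\bbZ(\alpha')^\vee$, together with the direct-sum decomposition $T_1 \cong Z_1 \times \langle\alpha^\vee(1+p\cO_F)\rangle_{\alpha \in \Phi}$ provided by Lemma \ref{lem:boundconsq's}(2). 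Combining the above steps yields the desired basis for $\overline{\gr(I)}/[\overline{\gr(I)},\overline{\gr(I)}]$, and passage to $S_{\bbF_p}$ delivers the claimed polynomial algebra.
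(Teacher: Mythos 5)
Your proof is correct, and it takes essentially the same path as the paper, just organized through the clean abstract identification $U_k(\fg)^{\mathrm{ab}} \cong U_k(\fg/[\fg,\fg]) \cong S_k(\fg/[\fg,\fg])$, whereas the paper works directly with the two-sided ideal $J \subset U_{\bbF_p}(\overline{\gr(I)})$ generated by all $ab - ba$. Both proofs then reduce to the same two computations: (i) every basis element of $\overline{\gr(I)}$ outside the claimed set lies in $[\overline{\gr(I)},\overline{\gr(I)}]$, via the same inductive use of Propositions \ref{prop:comm1}, \ref{prop:comm2} and Lemma \ref{lem: root-addition} that appears in the proof of Proposition \ref{prop: Minimal Generating Set For Universal Enveloping Algebra}; and (ii) the elements in the claimed set survive, via the grading on $\overline{\gr(I)}$ in degree $1/h$ and via the splitting $T_1 \cong Z_1 \times \langle\alpha^\vee(1+p\cO_F)\rangle_{\alpha\in\Phi}$ of Lemma \ref{lem:boundconsq's}(2) in degree $1$. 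Your treatment of the degree-$1$ independence is slightly more explicit than the paper's (which just cites the argument at the end of the proof of Proposition \ref{prop: Minimal Generating Set For Universal Enveloping Algebra}), and your final step via $S_{\bbF_p}(\fg/[\fg,\fg])$ avoids a separate invocation of PBW, but the mathematical content is identical. One small wording caveat: you should say ``two-sided ideal generated by $[\fg,\fg]$'' rather than ``left ideal''; the two coincide here because $[\fg,\fg]$ is a Lie ideal of $\fg$, but the left-ideal phrasing is not literally correct without that remark.
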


\begin{proof}
	Let $J$ denote the two sided ideal of $U_{\bbF_p}(\overline{\gr(I)})$ generated by all commutators $ab - ba$, where $a,b \in U_{\bbF_p}(\overline{\gr(I)})$.  This is the smallest two-sided ideal such that $U_{\bbF_p}(\overline{\gr(I)})/J$ is commutative.  Indeed, if $J'$ was another two-sided ideal for which $U_{\bbF_p}(\overline{\gr(I)})/J'$ was commutative, the projection map $U_{\bbF_p}(\overline{\gr(I)}) \longtwoheadrightarrow U_{\bbF_p}(\overline{\gr(I)})/J'$ would necessarily map all $ab - ba$ to 0.  Hence, we obtain $ab - ba \in J'$, and therefore $J \subset J'$.  
	
	Consequently, we have that $U_{\bbF_p}(\overline{\gr(I)})/J$ is the largest quotient of $U_{\bbF_p}(\overline{\gr(I)})$ which is commutative, and it suffices to determine its structure.  The proof of Proposition \ref{prop: Minimal Generating Set For Universal Enveloping Algebra} shows that each of the elements
	$$\left\{\overline{u_{\gamma}(p^{\delta_\gamma}[\xi]^r)},~\overline{\alpha^\vee(1 + p[\xi]^r)}~:~ \gamma\not\in\Phi_1,\alpha \in \Delta, 0 \leq r \leq f - 1\right\}$$
	is contained in $J$, and these elements moreover generate $J$.  Furthermore, since the degree of (any homogeneous component of) any commutator is at least $2/h$, we see that the elements $\{\overline{u_\gamma(p^{\delta_\gamma}[\xi]^r)}: \gamma \in \Phi_1, 0 \leq r \leq f - 1\}$ do not lie in $J$.  Further, similarly to the end of the proof of Proposition \ref{prop: Minimal Generating Set For Universal Enveloping Algebra}, the elements $\{\overline{\lambda(1 + p[\xi]^r)}:\lambda \in \cB, 0 \leq r \leq f - 1\}$ are not contained in $J$.  By the Poincar\'e--Birkhoff--Witt Theorem we conclude that \eqref{eqn:min-gen-set} gives a set of polynomial generators of $U_{\mathbb{F}_p}(\overline{\gr(I)})/J$.
	\end{proof}

\section{\texorpdfstring{Comparing filtrations on $\mathbb{F}_p \llbracket I \rrbracket$}{Comparing filtrations on F_p [[I]]}}

We suppose in this section that $\mathbf{G}$ is semisimple, or, equivalently, that $\mathbf{Z} = 1$.  Let $\mfm = \mfm_I$ be the unique maximal ideal of $\F_p \llbracket I \rrbracket$.  Recall that the ring $\F_p \llbracket I \rrbracket$ has two natural filtrations: the $\mfm$-adic filtration, and the filtration induced by $\overline{w}$ (which is associated to the $p$-valuation $\omega$ \cite[I.2.1.7]{lazard}).
We will prove the following generalization of \cite[Prop. 5.3.3]{BHHMS}, comparing the two filtrations.

\begin{prop}
\label{prop: ideal-comparison}
    For $k \in \frac{1}{h} \Z_{\geq 0}$, we have
    \begin{equation*}
        \mfm^{hk} = \{x \in \F_p \llbracket I \rrbracket: \overline{w}(x) \geq k\}. 
    \end{equation*}
    In particular, the graded rings $\gr_\mfm(\F_p \llbracket I \rrbracket)$ and $\gr_\omega(\F_p \llbracket I \rrbracket)$ are isomorphic up to rescaling indices. 
\end{prop}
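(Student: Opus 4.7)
The plan is to prove the stated equality $\mfm^{hk} = \F_p\llbracket I\rrbracket_k$ by establishing both inclusions separately; the isomorphism of graded rings will then follow formally by passing to successive subquotients.

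For the easy containment $\mfm^{hk} \subseteq \F_p\llbracket I \rrbracket_k$, I would first observe that $\F_p\llbracket I \rrbracket$ is local with residue field $\F_p$, so that $\mfm$ coincides with the augmentation ideal and is topologically generated by the elements $g - 1$ for $g \in I$. Each such generator satisfies $\overline{w}(g-1) = \omega(g) \geq \tfrac{1}{h}$, since $\omega$ takes values in $\tfrac{1}{h}\Z_{\geq 1}$. Hence $\mfm \subseteq \F_p\llbracket I \rrbracket_{1/h}$, and the multiplicative property of the valuation $\overline{w}$ yields $\mfm^{hk} \subseteq \F_p\llbracket I \rrbracket_{k}$.

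For the reverse containment, I would exploit the key structural fact of Proposition \ref{prop: Minimal Generating Set For Universal Enveloping Algebra}: in the semisimple case, $\cB = \emptyset$, and hence the displayed minimal generating set for $\gr_\omega(\F_p\llbracket I \rrbracket) \cong U_{\F_p}(\overline{\gr(I)})$ consists \emph{entirely} of elements of degree $\tfrac{1}{h}$, each of which is the principal symbol of some $u_\gamma(p^{\delta_\gamma}[\xi]^r) - 1 \in \mfm$. Consequently, any homogeneous element of $\gr_{\omega, n/h}(\F_p\llbracket I \rrbracket)$ is an $\F_p$-linear combination of products of $n$ such symbols, and each such product lifts to an element of $\mfm^n$. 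Comparing with the filtration then yields
\[
\F_p\llbracket I \rrbracket_{n/h} = \mfm^n + \F_p\llbracket I \rrbracket_{(n+1)/h} \qquad \textnormal{for every } n \geq 0.
\]

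The remaining step is an iterative approximation argument. Given $x \in \F_p\llbracket I \rrbracket_{k}$, I would apply the above identity repeatedly to produce sequences $y_j \in \mfm^{hk+j-1}$ and $z_n \in \F_p\llbracket I \rrbracket_{(hk+n)/h}$ satisfying $x = \sum_{j=1}^{n} y_j + z_n$ for every $n \geq 1$. The partial sums $\sum_{j=1}^n y_j$ form a Cauchy sequence in the $\mfm$-adic topology, and hence converge by $\mfm$-adic completeness of $\F_p\llbracket I \rrbracket$ to some $y \in \mfm^{hk}$. The main obstacle is identifying $y$ with $x$: for this I would invoke the easy containment proved above, which implies that $\mfm$-adic convergence is stronger than $\overline{w}$-adic convergence, so the partial sums also converge to $y$ in the $\overline{w}$-adic topology. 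Since $z_n \to 0$ in the $\overline{w}$-adic topology by construction, and $\F_p\llbracket I \rrbracket$ is Hausdorff for $\overline{w}$ (being a completion of $\F_p[I]$), it follows that $x = y \in \mfm^{hk}$. Finally, using the identification $\F_p\llbracket I \rrbracket_{(j+1)/h} = \F_p\llbracket I \rrbracket_{(j/h)^+}$ (valid because $\overline{w}$ takes values in $\tfrac{1}{h}\mathbb{Z}$), the proven equality of ideals identifies $\mfm^j/\mfm^{j+1}$ with $\gr_{\omega, j/h}(\F_p\llbracket I \rrbracket)$ for each $j \geq 0$, yielding the graded ring isomorphism after rescaling the $\mfm$-adic degrees by a factor of $\tfrac{1}{h}$.
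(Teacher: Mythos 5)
Your proof is correct and takes a genuinely different route from the paper's. The paper's argument goes through Lazard's explicit description of the valuation $w$ in terms of the full ordered basis $x_1, \ldots, x_d$ of $I$ (i.e., the monomials $z^\alpha = \prod (x_i - 1)^{\alpha_i}$ from [Lazard, Eqn. III.2.3.8.8]); this forces one to verify $x_i - 1 \in \mfm^{h\,\omega(x_i)}$ for \emph{every} basis element, including $u_\gamma(\cdot)$ for $\gamma$ of height up to $h-1$ and $\alpha^\vee(\cdot)$ of degree $1$, and these verifications are exactly the content of the nontrivial Lemmas \ref{lem: roots-power-of-m} and \ref{lem: coroots-power-of-m}. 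Your approach sidesteps those two lemmas entirely: by Proposition \ref{prop: Minimal Generating Set For Universal Enveloping Algebra} (with $\cB = \emptyset$), the ring $\gr_\omega(\F_p\llbracket I\rrbracket) \cong U_{\F_p}(\overline{\gr(I)})$ is generated in degree $\tfrac{1}{h}$ by symbols of elements $u_\gamma(p^{\delta_\gamma}[\xi]^r) - 1$ with $\gamma \in \Phi_1$, and for these the only fact needed is the tautological $u_\gamma(\cdot) - 1 \in \mfm$. The price you pay is the iterative approximation step at the end, which requires the $\overline{w}$-filtration on $\F_p\llbracket I\rrbracket$ to be separated; your parenthetical justification ``being a completion of $\F_p[I]$'' is a bit terse. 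The separatedness does hold — it follows from \cite[I.2.3.8, III.2.3.3]{lazard}, which show that $\F_p\llbracket I\rrbracket$ is the separated $\overline{w}$-completion of $\F_p[I]$ and that $\gr_\omega$ of the two agree — but you should cite this explicitly, since the paper's proof does not need to invoke it (instead it implicitly uses only that $\mfm^{hk}$ is a closed ideal of the complete Noetherian local ring $\F_p\llbracket I\rrbracket$, which is more standard). Overall the trade is roughly even, but your route is conceptually illuminating in that it makes clear that the comparison of filtrations rests solely on $\gr_\omega$ being generated in the lowest possible degree.
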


\subsection{Preparation}

We will need the following general lemma.

\begin{lem}
\label{lem: gp-ring-arithmetic}
    Let $\cG$ be a pro-$p$ group, and let $\mfm$ be the maximal ideal of $\F_p \llbracket \cG \rrbracket$. 
    \begin{enumerate}
        \item For all $g \in \cG$, $(g^p-1) \in \mathfrak{m}^p$.
    \end{enumerate}
    For the rest, let $a, b,c \in \cG$ with $(a-1) \in \mfm^i$, $(b-1) \in \mfm^j$, and $(c-1) \in \mfm^k$ for some $i,j,k \geq 1$. Then:
    \begin{enumerate}
    \setcounter{enumi}{1}
        \item $abc-1 \equiv (ab-1)+(bc-1)+(ac-1)-(a-1)-(b-1)-(c-1) \mod{\mathfrak{m}^{i+j+k}}.$
        \item $ab-1  \equiv (a-1) + (b-1) \equiv ba-1 \mod{\mathfrak{m}^{i+j}}$,
        \item $a^{-1}-1 \equiv -(a-1) \mod{\mathfrak{m}^{2i}}$,
        \item $(bab^{-1}a^{-1}-1) \in \mathfrak{m}^{i+j}$ and $(aba^{-1}b^{-1}-1) \in \mathfrak{m}^{i+j}$.
    \end{enumerate}
\end{lem}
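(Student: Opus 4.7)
The plan is to derive all five parts from the single identity $ab - 1 = (a-1)(b-1) + (a-1) + (b-1)$ in $\F_p\llbracket\cG\rrbracket$, together with two standing facts: $\mfm$ coincides with the augmentation ideal (so that $g-1 \in \mfm$ for every $g \in \cG$), and $\binom{p}{k} \equiv 0 \pmod{p}$ for $1 \leq k \leq p-1$. For (1), I would expand $g^p - 1 = ((g-1)+1)^p - 1$ via the binomial theorem; in characteristic $p$ all intermediate terms vanish, leaving $(g-1)^p \in \mfm^p$. Part (3) is then immediate from the displayed identity, using $(a-1)(b-1) \in \mfm^{i+j}$, and the statement for $ba-1$ is symmetric. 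For (2), I would analogously expand $abc - 1 = ((a-1)+1)((b-1)+1)((c-1)+1) - 1$ into single, double, and triple products of $(a-1), (b-1), (c-1)$; direct comparison with the right-hand side of (2) shows that all terms cancel except the triple product $(a-1)(b-1)(c-1)$, which lies in $\mfm^{i+j+k}$.

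For (4), I would first observe that $a^{-1}-1 = -a^{-1}(a-1) \in \mfm^i$, since left multiplication by the unit $a^{-1}$ preserves powers of $\mfm$. Applying the displayed identity to the equation $aa^{-1} = 1$ then yields $0 = (a-1)+(a^{-1}-1)+(a-1)(a^{-1}-1)$, and the last term now lies in $\mfm^i \cdot \mfm^i = \mfm^{2i}$ by the preliminary bound, giving the required congruence.

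Part (5) is the most interesting. The key is the commutator identity $[b,a]\cdot ab = ba$, which rearranges to $([b,a]-1)(ab) = ba - ab$. Since $ab$ is a unit in $\F_p\llbracket\cG\rrbracket$, it suffices to check that $ba - ab \in \mfm^{i+j}$; but expanding both sides via the displayed identity yields $ba - ab = (b-1)(a-1) - (a-1)(b-1)$, which manifestly lies in $\mfm^{i+j}$. The second assertion about $aba^{-1}b^{-1}$ follows by swapping the roles of $a$ and $b$. I do not anticipate any real obstacle: these are all formal manipulations in the completed group ring. The only mildly subtle points worth flagging are (i) the identification of $\mfm$ with the augmentation ideal in the pro-$p$ setting, used implicitly throughout and explicitly in (1), and (ii) the need, in (4), to establish the crude bound $a^{-1}-1 \in \mfm^i$ before refining it modulo $\mfm^{2i}$.
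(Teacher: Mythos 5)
Your proposal is correct, and for parts (1)--(4) it tracks the paper's argument closely: part (1) via $(g-1)^p = g^p-1$, parts (2) and (3) via the product expansion of $abc-1$ and $ab-1$, and part (4) by applying the identity to $aa^{-1}=1$. A small improvement over the paper in (4) is that you get the preliminary bound $a^{-1}-1 \in \mfm^i$ in one stroke from $a^{-1}-1 = -a^{-1}(a-1)$ and the fact that $\mfm^i$ is a two-sided ideal, whereas the paper obtains it by first invoking part (3) with the trivial exponent $j=1$ and then bootstrapping.

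Part (5) is where you genuinely diverge. The paper derives it from parts (2)--(4) through a fairly long chain of congruences modulo $\mfm^{i+j}$, starting with $bab^{-1}a^{-1}-1 \equiv (bab^{-1}-1)+(a^{-1}-1)$ and successively unpacking each term. You instead use the exact identity $\bigl([b,a]-1\bigr)(ab) = ba - ab$ coming from $[b,a]\,ab = ba$, reduce to showing $ba - ab \in \mfm^{i+j}$ (legitimate, since $\mfm^{i+j}$ is a two-sided ideal and $(ab)^{-1}$ is a unit), and then compute $ba-ab = (b-1)(a-1)-(a-1)(b-1)$, which lies in $\mfm^{i+j}$ on the nose. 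This is shorter, avoids the paper's reduction to the case $i\geq j$, and does not depend on parts (2)--(4) at all. The paper's route has the virtue of illustrating how the earlier parts combine, but as a matter of economy your commutator identity is the cleaner path. Both are valid.
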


\begin{proof}
    \begin{enumerate}
    \item Since $(g-1) \in \mathfrak{m}$, this follows from the identity $(g-1)^p = g^p-1$ in $\mathbb{F}_p\llbracket \cG\rrbracket$.
        \item This follows from the identity
        \[(a-1)(b-1)(c-1) = (abc-1)-(ab-1)-(bc-1)-(ac-1)+(a-1)+(b-1)+(c-1).\]
        \item For $ab-1$, this follows from the identity
        \[(a-1)(b-1) = (ab-1)-(a-1)-(b-1),\]
        and similarly for $ba-1$.
        \item Setting $b=a^{-1}$ in part 3 shows that $a^{-1} - 1 \in \mfm^i$ and that $a^{-1}-1 \equiv -(a-1) \mod{\mathfrak{m}^{2i}}$.
        \item We may assume that $i \geq j$. We will use parts 2, 3, and 4. First, parts 3 and 4 readily imply that $(bab^{-1} -1) \in \mfm^j$ and $(a^{-1} -1 ) \in \mfm^i$; now, computing modulo $\mathfrak{m}^{i+j}$, we have 
        \begin{align*}
            bab^{-1}a^{-1}-1 &\equiv (bab^{-1}-1)+(a^{-1}-1)\\
            &\equiv [(ba-1)+(ab^{-1}-1)+(bb^{-1}-1)-(b-1)-(a-1)-(b^{-1}-1)] \\
            & \qquad +(a^{-1}-1)\\
            &\equiv [(b-1)+(a-1)]+[(a-1)+(b^{-1}-1)] \\
            & \qquad -(b-1)-(a-1)-(b^{-1}-1)+(a^{-1}-1)\\
            &\equiv (a-1)+ (a^{-1}-1) \\
            &\equiv 0.
        \end{align*}
        Here, the first and third congruences follow from part 3 of the lemma, the second from part 2, and the last from part 4.

        Since $aba^{-1}b^{-1} = (bab^{-1}a^{-1})^{-1}$, it follows from part 4 that $(aba^{-1}b^{-1}-1)\in \mathfrak{m}^{i+j}$ as well.
    \end{enumerate}
    \end{proof}

\begin{lem}
\label{lem: root-addition}
    If $\alpha \in \Phi^+$ is not of maximal height, then there exists a $\delta \in \Delta$ such that $\alpha + \delta \in \Phi^+$.
\end{lem}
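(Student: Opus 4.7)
The plan is to prove the contrapositive: assume $\alpha + \delta \notin \Phi$ for every $\delta \in \Delta$, and show that $\alpha$ equals the highest root $\theta$, which is the unique root of maximal height in the irreducible root system $\Phi$. A first observation uses the root-string structure. For each simple $\delta$, the $\delta$-string through $\alpha$ has the form $\alpha - p\delta, \ldots, \alpha, \ldots, \alpha + q\delta$ with $\langle \alpha, \delta^\vee\rangle = p - q$; by hypothesis $q = 0$, so $\langle \alpha, \delta^\vee\rangle \geq 0$ for every $\delta \in \Delta$, and hence $\alpha$ is dominant.

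Assume for contradiction that $\alpha \neq \theta$. A standard fact about irreducible root systems is that $\theta$ is the unique maximum of $\Phi^+$ under the root partial order, so $\theta - \alpha$ is a nonzero non-negative integer combination of simple roots; combined with dominance this yields $(\alpha, \theta - \alpha) \geq 0$, hence $(\alpha,\theta) \geq (\alpha,\alpha) > 0$, and so $\langle \theta, \alpha^\vee\rangle > 0$. The $\alpha$-string through $\theta$ has $q' = 0$ by maximality of $\theta$, forcing $\theta - \alpha \in \Phi^+$. Thus the set $\{\gamma \in \Phi^+ : \alpha + \gamma \in \Phi\}$ is nonempty, and I would pick $\gamma$ in it of minimum height. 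If $\gamma \in \Delta$, we immediately contradict the hypothesis, so we may assume $\textnormal{ht}(\gamma) \geq 2$. Using the classical fact that a non-simple positive root can be written as a simple root plus a positive root of height one less (Humphreys, \emph{Introduction to Lie Algebras}, Lemma 10.2A), we write $\gamma = \delta + \gamma'$ with $\delta \in \Delta$ and $\gamma' \in \Phi^+$. The Jacobi identity applied to root vectors $e_\alpha, e_\delta, e_{\gamma'}$ in the semisimple Lie algebra associated to $\Phi$ gives
\[
    [e_\alpha, [e_\delta, e_{\gamma'}]] + [e_\delta, [e_{\gamma'}, e_\alpha]] + [e_{\gamma'}, [e_\alpha, e_\delta]] = 0,
\]
whose first term is a nonzero multiple of $e_\theta$ (structure constants are nonzero whenever the sum of two roots is a root). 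Hence either $[e_\alpha, e_\delta] \neq 0$, forcing $\alpha + \delta \in \Phi$ and contradicting the hypothesis, or $[e_\alpha, e_{\gamma'}] \neq 0$, forcing $\alpha + \gamma' \in \Phi$ and contradicting minimality of $\gamma$ (since $\textnormal{ht}(\gamma') < \textnormal{ht}(\gamma)$).

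The main obstacle is the Jacobi step above: it is what lets one peel a simple summand $\delta$ off a candidate $\gamma$ and reduce to addition of a simple root. Concretely, this hinges on the nonvanishing of the structure constants $c_{\alpha,\beta;1,1}$ in the commutator formula \eqref{commutator} whenever $\alpha + \beta \in \Phi$, which is already available in the paper's setup via Lemma \ref{lem:boundconsq's}. The other ingredients --- the root-string dichotomy and the uniqueness of the maximum of $(\Phi^+, \leq)$ in an irreducible root system --- are classical.
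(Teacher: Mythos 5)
Your proof is correct, but it takes a genuinely different route from the paper's. The paper argues forward: starting from $\alpha \prec \theta$ with height gap at least $2$, it shows there is always a root strictly between $\alpha$ and $\theta$ in the root poset. The key step there is purely metric --- writing $v = \gamma - \alpha$, picking a simple $\delta$ occurring in $v$, and observing that if both $\alpha + \delta$ and $\gamma - \delta$ failed to be roots then $(v,\delta) \leq 0$ for every such $\delta$, forcing $(v,v)\leq 0$, a contradiction. Your argument instead takes the contrapositive: assuming $\alpha + \delta \notin \Phi$ for all simple $\delta$, you deduce that $\alpha$ is dominant, use dominance plus $\alpha\neq\theta$ to produce one element $\theta - \alpha$ of $S = \{\gamma \in \Phi^+ : \alpha+\gamma\in\Phi\}$, and then "peel" a simple root off a minimal-height $\gamma\in S$ via the Jacobi identity in a semisimple Lie algebra realizing $\Phi$. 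The paper's argument is self-contained at the level of abstract root systems and needs only the Bourbaki root-string facts $(\alpha,\delta)\geq 0$, $(\gamma,\delta)\leq 0$; yours additionally invokes the nonvanishing of Chevalley structure constants (or Humphreys, Lemma 10.2A, for the decomposition $\gamma = \delta + \gamma'$), trading a bit of combinatorics for Lie-algebraic machinery. Both are valid; yours is perhaps closer in spirit to the paper's later use of the commutator formula \eqref{commutator}. One small slip: the first Jacobi term $[e_\alpha,[e_\delta,e_{\gamma'}]]$ is a nonzero multiple of $e_{\alpha+\gamma}$, not $e_\theta$ (since the minimal-height $\gamma$ need not equal $\theta-\alpha$), but this is harmless --- nonvanishing is all you use.
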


\begin{proof}
    We follow the argument in \cite{root-lemma-mo}.   Denote by $\preceq$ the usual partial order on $\Phi^+$. Let $\theta$ be the highest root, so $\theta - \alpha$ is a sum of simple roots with non-negative coefficients (though not necessarily a root itself; see \cite[Ch. VI, \S 1.8, Prop. 25]{bourbaki:lie}). If $\Ht(\theta) = \Ht(\alpha) + 1$, then we are done. Suppose that this is not the case. We claim that if $\alpha \preceq \gamma$ are positive roots with $\Ht(\gamma) - \Ht(\alpha) \geq 2$, then there is a root $\beta \in \Phi^+$ with $\alpha \prec \beta \prec \gamma$. This is sufficient, since $\Ht(\theta) - \Ht(\alpha) \geq 2$, so it follows by induction that there is a root $\beta \succeq \alpha$ with $\Ht(\beta) = \Ht(\alpha) + 1$, and we can take $\delta = \beta - \alpha$.

    Suppose then that $\alpha \preceq \gamma$ with $\Ht(\gamma) - \Ht(\alpha) \geq 2$, and set $v = \gamma - \alpha$, a sum of simple roots with non-negative coefficients. Let $\delta \in \Delta$ be a simple root appearing in the sum, and suppose for the sake of contradiction that neither $\alpha + \delta$ nor $\gamma - \delta$ is a root. Let $(\cdot, \cdot)$ be a positive definite inner product on $\R \Phi$ invariant under the Weyl group. Then $(\alpha, \delta) \geq 0$ and $(\gamma, \delta) \leq 0$ \cite[Ch. VI, \S 1.3, Cor. to Thm. 1]{bourbaki:lie}, so $(v, \delta) \leq 0$. Summing this inequality over all $\delta$ occurring in $v$ gives $(v,v) \leq 0$; however $(\cdot, \cdot)$ is positive definite, so we get a contradiction. 
\end{proof}

Recall the notation 
\begin{equation*}
    \Phi_k = \{\gamma \in \Phi: \textnormal{ht}(\gamma) \equiv k~(\textnormal{mod}~h)\}
\end{equation*}
and 
$$\delta_\alpha = \begin{cases}0 & \textnormal{if $\alpha \in \Phi^+$,} \\ 1 & \textnormal{if $\alpha \in \Phi^-$.} \end{cases}$$

\begin{lem}
\label{lem: roots-power-of-m}
    Let $k \in \{1, \dots, h-1\}$, $z \in \cO_F$, and $\alpha \in \Phi_k$.  Then $u_\alpha(p^{\delta_\alpha}z) - 1 \in \mfm^k$.  
\end{lem}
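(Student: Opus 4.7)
The plan is to proceed by induction on $k$. The base case $k = 1$ is immediate from $\mfm$ being the augmentation ideal of $\F_p\llbracket I\rrbracket$. For the inductive step $k \geq 2$, I would first handle the easy cases: if $\alpha \in \Phi^-$ (so $\delta_\alpha = 1$), or if $\alpha \in \Phi^+$ and $z \in p\cO_F$, then writing $p^{\delta_\alpha}z = pw$ and using the additive group morphism identity $u_\alpha(pw) = u_\alpha(w)^p$ together with the characteristic-$p$ equality $g^p - 1 = (g - 1)^p$ gives $u_\alpha(p^{\delta_\alpha}z) - 1 \in \mfm^p$. Since $p > h + 1 > k$, this lies in $\mfm^k$.

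The main case is $\alpha \in \Phi_k^+$ with $z \in \cO_F^\times$. Applying Lemma \ref{lem: root-addition}, I would decompose $\alpha = \beta + \gamma$ with $\beta \in \Delta$ and $\gamma \in \Phi^+$ of height $k - 1$. The Chevalley commutator formula \eqref{commutator} gives
\[
[u_\beta(1), u_\gamma(z)] = u_\alpha(c_{\beta,\gamma;1,1}z) \cdot R,
\]
where $R = \prod_{(i,j) \neq (1,1)} u_{i\beta+j\gamma}(c_{\beta,\gamma;i,j}z^j)$. The inductive hypothesis yields $u_\gamma(z) - 1 \in \mfm^{k-1}$, and combined with $u_\beta(1) - 1 \in \mfm$, Lemma \ref{lem: gp-ring-arithmetic}(5) gives $[u_\beta(1), u_\gamma(z)] - 1 \in \mfm^k$. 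Since $c_{\beta,\gamma;1,1} \in \cO_F^\times$ by Lemma \ref{lem:boundconsq's}, showing $R - 1 \in \mfm^k$ would imply $u_\alpha(c_{\beta,\gamma;1,1}z) - 1 \in \mfm^k$, and reparametrizing $z$ would give the claim for all $z \in \cO_F^\times$.

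The main obstacle will be controlling $R$: each of its factors has the form $u_{\alpha'}(cz^j)$ with $\alpha'$ a positive root of height $l = i + j(k-1) > k$, which is outside the inductive hypothesis. In simply-laced types no such extra terms appear, but in non-simply-laced types the dependency between heights is genuinely cyclic (in $B_2$, for instance, the commutator proof of the statement for $\alpha+\beta$ would need the statement for $\alpha+2\beta$, whose proof in turn reuses the statement for $\alpha+\beta$). To break this cycle, I would invoke Corollary \ref{lazardcor} together with Proposition \ref{prop: Minimal Generating Set For Universal Enveloping Algebra}: the latter tells us that, in the semisimple case, $U_{\F_p}(\overline{\gr(I)})$ is generated in degree $1/h$. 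Hence any $x$ with $\overline{w}(x) \geq k/h$ has its class in $\gr_{k/h}$ realizable as a sum of products of $k$ degree-$1/h$ generators, which lifts to some $y_0 \in \mfm^k$ with $\overline{w}(x - y_0) \geq (k+1)/h$. Iterating and summing --- using $\mfm$-adic completeness of $\F_p\llbracket I\rrbracket$ together with the fact that both the $\mfm$-adic and $\omega$-adic filtrations induce the profinite topology --- gives $\{x : \overline{w}(x) \geq k/h\} \subseteq \mfm^k$, and the lemma follows by applying this with $x = u_\alpha(p^{\delta_\alpha}z) - 1$.
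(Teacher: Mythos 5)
Your ``easy case'' treatment of negative roots is incorrect: for $\alpha \in \Phi^-$ and $z \in \cO_F^\times$, the element $u_\alpha(z)$ does \emph{not} lie in $I$ (one needs $z \in p\cO_F$ for that), so $(u_\alpha(z)-1)^p$ is not a meaningful expression in $\F_p\llbracket I\rrbracket$ and the identity $u_\alpha(pz)-1 = (u_\alpha(z)-1)^p$ cannot be invoked. The paper instead handles negative roots through the Chevalley commutator formula, applying the $p$-th power trick only to those extra terms $u_\gamma(p^i y)$ for which $u_\gamma(p^{i-1}y)$ genuinely lies in $I$ (namely when $\gamma \in \Phi^+$, or $\gamma \in \Phi^-$ and $i \geq 2$). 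This error is, however, moot for your overall proposal, since your final argument subsumes all cases.

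Your main argument is a genuinely different route and, suitably elaborated, it works. You are in effect proving the containment $\{x : \overline{w}(x) \geq k/h\} \subseteq \mfm^k$ --- one half of Proposition~\ref{prop: ideal-comparison} --- directly from Corollary~\ref{lazardcor} and the fact that, in the semisimple case ($\cB = \emptyset$), Proposition~\ref{prop: Minimal Generating Set For Universal Enveloping Algebra} shows $U_{\F_p}(\overline{\gr(I)})$ is generated in degree $1/h$; the lemma then falls out as a corollary by taking $x = u_\alpha(p^{\delta_\alpha}z) - 1$. The paper goes in the opposite direction: it proves Lemmas~\ref{lem: roots-power-of-m} and~\ref{lem: coroots-power-of-m} first, by hand, and uses them as ingredients in the proof of Proposition~\ref{prop: ideal-comparison}. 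Your approach is slicker but heavier, relying on Lazard's isomorphism and the generating-set computation; the paper's is more elementary and self-contained.

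On the ``cyclicity'' you correctly identified in non-simply-laced types: the paper resolves it not by appealing to the universal enveloping algebra, but by a nested induction --- outer induction on $k$, and for fixed $k$ an inner induction on $\Ht(\alpha)$ in \emph{descending} order. The extra Chevalley terms $u_\gamma(z_\gamma)$ satisfy $\Ht(\gamma) > \Ht(\alpha)$ and hence are covered by the inner inductive hypothesis, with the maximal-height root (for which no extra terms appear) serving as the base case. This descending-height induction is precisely the elementary device that breaks the cycle you worried about.
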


\begin{proof}
    We first consider the positive roots $\alpha$ and prove that $\Ht(\alpha) \geq k$ implies $u_\alpha(z) - 1 \in \mfm^k$ using a nested induction, first on $k$, then in descending order on $\Ht(\alpha)$. If $k = 1$ then the statement is clear. Now suppose $k > 1$ and assume the statement holds for $k-1$. Suppose $\Ht(\alpha) \geq k$ and write $\alpha = \alpha' + \beta$ where $\beta \in \Delta$ and $\alpha' \in \Phi^+$. The commutator formula \eqref{commutator} for $u_{\alpha'}(x)$ ($x \in \cO_F$) and $u_\beta(1)$ reads
    \begin{equation}\label{eq: commutator-filtration}
        [u_{\alpha'}(x), u_\beta(1)] = u_{\alpha}(c_{\alpha', \beta;1,1} x) \cdot \prod_{\gamma \in S} u_\gamma(z_\gamma) , 
    \end{equation}
     where $\gamma$ ranges over some (possibly empty) subset $S \subset \Phi^+$ with $\Ht(\gamma) > \Ht(\alpha)$ and where $z_\gamma \in \cO_F$. (We note that $\alpha' \neq \beta$ since $\Phi$ is a reduced root system, so the formula applies.) For any $z \in \cO_F$, we can choose $x \in \cO_F$ such that $c_{\alpha', \beta;1,1} x = z$ since $c_{\alpha', \beta;1,1} \in \cO_F^\times$. Observe that $u_{\alpha'}(x) -1 \in \mfm^{k-1}$ by the induction hypothesis. If $\alpha$ is of maximal height ($\Ht(\alpha) = h-1$), then $S = \emptyset$, so (\ref{eq: commutator-filtration}) immediately implies that $u_\alpha(z) - 1 \in \mfm^k$ using Lemma \ref{lem: gp-ring-arithmetic}, part 5. Now assume $k \leq \Ht(\alpha) < h-1$. For all $\gamma \in S$, since $\Ht(\gamma) > \Ht(\alpha)$, we have $u_\gamma(z_\gamma) - 1 \in \mfm^k$ by induction hypothesis. Reducing (\ref{eq: commutator-filtration}) modulo $\mfm^k$ and using Lemma \ref{lem: gp-ring-arithmetic} then shows that $u_\alpha(z) - 1 \in \mfm^k$ as well. 

    For negative roots $\alpha$, we will prove that $\Ht(\alpha) \geq k-h$ implies $u_\alpha(pz) - 1 \in \mfm^{k}$ using a similar induction. If $k =1$ then this is clear. For $k > 1$, assume $\Ht(\alpha) \geq k-h$ (so $\alpha$ is not of minimal height) and write $\alpha = \alpha' + \beta$ for some $\beta \in \Delta$ and $\alpha' \in \Phi^-$ using Lemma \ref{lem: root-addition}. Observe that in the commutator formula (with $x \in \cO_F$)
    \begin{equation}\label{eq: negative-commutator-formula}
        [u_{\alpha'}(px), u_{\beta}(1)] = \prod_{\substack{i,j > 0 \\ i\alpha' + j\beta \in \Phi}}u_{i\alpha' + j\beta}(c_{\alpha',\beta;i,j}p^i x^i),
    \end{equation}
    all factors where $\gamma \coloneqq i\alpha' + j\beta$ is a positive root are congruent to $1$ modulo $\mfm^p$: indeed, they are of the form $u_{\gamma}(py) = u_\gamma(y)^p$ for some $y \in \cO_F$, and $u_\gamma(y) \in I$ for $\gamma \in \Phi^+$, so $u_\gamma(y)^p - 1 \in \mfm^p$ by Lemma \ref{lem: gp-ring-arithmetic}. The same is true for any factors with $i > 1$, including ones corresponding to negative roots, but this time with $y \in p\cO_F$. Since $p > h+1 \geq k$ by assumption, we see that reducing (\ref{eq: negative-commutator-formula}) modulo $\mfm^k$ gives 
    \begin{equation}
        [u_{\alpha'}(px), u_{\beta}(1)] \equiv u_{\alpha}(c_{\alpha',\beta;1,1}px) \cdot \prod_{\gamma \in S} u_\gamma(pz_\gamma) \mod \mfm^k
    \end{equation}
    where $\gamma$ ranges over some subset $S \subset \Phi^-$ with $\Ht(\gamma) > \Ht(\alpha)$ and where $z_\gamma \in \cO_F$. Now the rest of the argument proceeds as in the first paragraph by descending induction on $\Ht(\alpha)$, noting that $S = \emptyset$ if $\Ht(\alpha) = -1$. 
\end{proof}

\begin{lem}\label{lem: coroots-power-of-m}
    If $\alpha \in \Phi^+$ and $0 \leq r \leq f-1$, then $\alpha^\vee(1+p[\xi]^r) -1\in \mfm^h$.
\end{lem}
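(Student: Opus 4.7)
The approach is to realize (the inverse of) $\alpha^\vee(1+p[\xi]^r)$ as the torus factor in the explicit factorization \eqref{eqn:alpha-alphacomm} from the proof of Proposition \ref{prop:comm2}, applied to a carefully chosen commutator $[u_\alpha(x_\alpha),\, u_{-\alpha}(p x_{-\alpha})]$, and then to bound the $\mfm$-adic orders of the remaining unipotent factors. Concretely, take $n = m = 0$, $x_\alpha = -[\xi]^r$, and $x_{-\alpha} = 1$, so that $(1 - p x_\alpha x_{-\alpha})^{-1} = (1 + p[\xi]^r)^{-1}$. Applying $\varphi_\alpha$ to \eqref{eqn:alpha-alphacomm} then produces a factorization
\[
[u_\alpha(-[\xi]^r),\, u_{-\alpha}(p)] \;=\; u_\alpha(A) \cdot \alpha^\vee\bigl((1 + p[\xi]^r)^{-1}\bigr) \cdot u_{-\alpha}(B),
\]
with $A \in p\cO_F$ and $B \in p^2\cO_F$ given by short explicit formulas.

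Set $k := \Ht(\alpha) \in \{1, \dots, h-1\}$, so that $\alpha \in \Phi_k^+$ and $-\alpha \in \Phi_{h-k}^-$. Lemma \ref{lem: roots-power-of-m} gives $u_\alpha(-[\xi]^r) - 1 \in \mfm^k$ and $u_{-\alpha}(p) - 1 \in \mfm^{h-k}$, and Lemma \ref{lem: gp-ring-arithmetic}(5) then forces the commutator itself into $1 + \mfm^h$. For the two unipotent tails, write $A = pA'$ with $A' \in \cO_F$ and $B = pB''$ with $B'' \in p\cO_F$, so that $u_\alpha(A) = u_\alpha(A')^p$ and $u_{-\alpha}(B) = u_{-\alpha}(B'')^p$; combining Lemma \ref{lem: roots-power-of-m} with Lemma \ref{lem: gp-ring-arithmetic}(1) then gives $u_\alpha(A) - 1 \in \mfm^{pk}$ and $u_{-\alpha}(B) - 1 \in \mfm^{p(h-k)}$. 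The hypothesis $p > h + 1$ together with $1 \leq k \leq h-1$ makes both $pk$ and $p(h-k)$ strictly larger than $h$, so both tails also lie in $1 + \mfm^h$.

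The multiplicativity and inversion congruences in parts (2), (3), (4) of Lemma \ref{lem: gp-ring-arithmetic} then force the middle factor to satisfy $\alpha^\vee((1 + p[\xi]^r)^{-1}) - 1 \in \mfm^h$, and one last application of part (4) to pass to the inverse yields $\alpha^\vee(1 + p[\xi]^r) - 1 \in \mfm^h$, as claimed. The main subtlety is the opening move: the torus factor in \eqref{eqn:alpha-alphacomm} naturally involves $(1 - p x_\alpha x_{-\alpha})^{-1}$ rather than $1 + p x_\alpha x_{-\alpha}$, so the choice of $x_\alpha$ and $x_{-\alpha}$ must be made carefully to land on $\alpha^\vee$ of $1 + p[\xi]^r$ (or its inverse); once that is set up, the rest is routine bookkeeping with the two auxiliary lemmas and the inequality $p > h$.
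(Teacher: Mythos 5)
Your proposal is correct and follows essentially the same strategy as the paper: realize a coroot element $\alpha^\vee(\cdot)$ as the torus factor in an explicit $\mathrm{SL}_2$ decomposition of a commutator $[u_\alpha(\cdot), u_{-\alpha}(\cdot)]$, show the commutator itself lies in $1 + \mfm^h$ via Lemma \ref{lem: gp-ring-arithmetic}(5) and Lemma \ref{lem: roots-power-of-m}, show the unipotent correction terms lie in $1 + \mfm^{pk}$ and $1 + \mfm^{p(h-k)}$ using the fact that their arguments are $p$-divisible (hence $p$-th powers), and solve for the middle factor. The only cosmetic difference is that you reuse the factorization \eqref{eqn:alpha-alphacomm} with the substitution $x_\alpha = -[\xi]^r$, $x_{-\alpha} = 1$ (landing on $\alpha^\vee((1+p[\xi]^r)^{-1})$ and then passing to the inverse), whereas the paper writes down a fresh $\mathrm{SL}_2$ identity landing directly on $\alpha^\vee(1+p[\xi]^r)$ with the unipotent factors explicitly displayed as $p$-th powers. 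One small imprecision: to get $u_\alpha(A) - 1 \in \mfm^{pk}$ you need not just Lemma \ref{lem: gp-ring-arithmetic}(1) as stated (which only gives $\mfm^p$) but rather the identity $(g-1)^p = g^p - 1$ from its proof, applied with $g - 1 \in \mfm^k$; this is what the paper's citation of parts 1, 3, 5 also implicitly uses, so it is not a real gap.
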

\begin{proof}
    For $0 \leq r \leq f - 1$, define
$$E_r := \begin{pmatrix}
            1 & [\xi]^r \\
            0 & 1 
        \end{pmatrix}, \quad F_r := \begin{pmatrix}
            1 & 0 \\
            -p[\xi]^r & 1 
        \end{pmatrix}, \quad H_r := \begin{pmatrix}
            1+p[\xi]^r  & 0 \\
            0 & (1+p[\xi]^r)^{-1}
        \end{pmatrix}$$
    in $\text{SL}_2(\cO_F)$.  We then have 
    \begin{equation*}
        F_0E_rF_0^{-1} E_r^{-1} = H_r \begin{pmatrix}
            1 & 0\\ 
            -p[\xi]^r(1+p[\xi]^r) & 1 
        \end{pmatrix}^p \begin{pmatrix}
            1 & -[\xi]^{2r}(1+p[\xi]^r)^{-1} \\
            0 & 1 
        \end{pmatrix}^p.
    \end{equation*}
    Applying $\varphi_{\alpha}$ to both sides then gives 
    \begin{equation*}
        [u_{-\alpha}(-p), u_{\alpha}([\xi]^r)] = \alpha^\vee\left(1+p[\xi]^r\right) \cdot u_{-\alpha}\left(-p[\xi]^r(1+p[\xi]^r)\right)^p\cdot u_\alpha\left(-[\xi]^{2r}(1+p[\xi]^r)^{-1}\right)^p
    \end{equation*}
    in $I$. Note that if $k = \Ht(\alpha)$, then $u_{\alpha}([\xi]^r) - 1 \in \mfm^k$ and $u_{-\alpha}(-p) - 1 \in \mfm^{h-k}$ by Lemma \ref{lem: roots-power-of-m}. Using parts 1, 3, and 5 of Lemma \ref{lem: gp-ring-arithmetic}, along with the assumption $p > h+1$, the equation above implies that $\alpha^\vee(1+p[\xi]^r) - 1 \in \mfm^h$. 
\end{proof}

\subsection{\texorpdfstring{Proof of Proposition \ref{prop: ideal-comparison}}{Proof of Proposition 5.1}}

We are now in a position to prove Proposition \ref{prop: ideal-comparison}.  Recall that we have assumed $\mathbf{Z} = 1$, so $\cB = \emptyset$.

\begin{proof}[Proof of Proposition \ref{prop: ideal-comparison}]
    In Subsection \ref{sec: graded-pieces-structure} we gave a basis for the $\F_p$-vector space $\bigoplus_{k=1}^h \gr_{\frac{k}{h}}(I)$, namely the set 
    \begin{equation*}
        \left\{\gr(\alpha^\vee(1+p[\xi]^r)):\alpha \in \Delta, 0 \leq r \leq f - 1 \right\} \cup  \bigcup_{k=1}^{h-1}  \left\{\gr(u_{\gamma}(p^{\delta_\gamma}[\xi]^r)): \gamma \in \Phi_k, 0 \leq r \leq f - 1\right\}.
    \end{equation*}
    Let $x_1, \dots, x_d$ be elements of $I$ such that $\{\gr(x_1), \dots, \gr(x_d)\}$ is equal to the above set. The family $(x_i)_{1\leq i\leq d}$ is an ordered basis of $I$ in the sense of \cite[Def. III.2.2.4, Prop. III.2.2.5]{lazard}, i.e., $(\gr(x_i))_{1 \leq i \leq d}$ is a basis for the $\F_p[P]$-module $\gr(I)$. By the description of $w$ given in \cite[Eqn. III.2.3.8.8]{lazard}, we see that for $k \in \frac{1}{h} \Z_{\geq 0}$, the ideal $\{x \in \F_p \llbracket I \rrbracket: \overline{w}(x) \geq k\}$ is generated by monomials $z^{\alpha} := \prod_{i=1}^d (x_i - 1)^{\alpha_i}$ ($\alpha \in \bbZ_{\geq 0}^d$) such that $\tau(\alpha) \coloneqq \sum_{i=1}^d \omega(x_i) \alpha_i \geq k$. Using Lemmas \ref{lem: roots-power-of-m}  and \ref{lem: coroots-power-of-m}, along with the definition of the $p$-valuation $\omega$, we have that $x_i - 1 \in \mfm^{h\cdot \omega(x_i)}$ for all $i$, and thus $z^{\alpha} \in \mfm^{h \cdot \tau(\alpha)} \subset \mfm^{hk}$. Hence $\{x \in \F_p \llbracket I \rrbracket: \overline{w}(x) \geq k\} \subset \mfm^{hk}$. On the other hand, we see that $\mfm = \{x \in \F_p \llbracket I \rrbracket: \overline{w}(x) \geq 1/h\}$ since $1/h = \min_{g \in I\smallsetminus\{1\}} \omega(g)$, and therefore  
    \begin{equation*}
        \mfm^{hk} = \{x \in \F_p \llbracket I \rrbracket: \overline{w}(x) \geq 1/h\}^{hk}\subset \{x \in \F_p \llbracket I \rrbracket: \overline{w}(x) \geq k\},
    \end{equation*}
    where the last inclusion follows from the properties of the valuation $\overline{w}$.
\end{proof}

\begin{rem}
\label{remark:nonss}
Suppose $\mathbf{G}$ is an arbitrary split reductive group, i.e., that $\mathbf{Z}$ is not necessarily trivial.  Then Proposition \ref{prop: ideal-comparison} no longer holds; indeed, the elements $\lambda(1 + p[\xi]^r) - 1$ for $\lambda \in \cB$ lie in $\mathfrak{m} \smallsetminus \mathfrak{m}^2$.  To see this, note that Lemma \ref{lem:boundconsq's} implies that $I \cong I^{\textnormal{sc}} \times Z_1$, where $I^{\textnormal{sc}}$ denotes the analogously defined Iwahori subgroup of the simply connected cover $\mathbf{G}^{\textnormal{sc}}$ of $\mathbf{G}/\mathbf{Z}$.  We then have 
$$\mathbb{F}_p \llbracket I\rrbracket \cong \mathbb{F}_p \llbracket I^{\textnormal{sc}}\rrbracket \widehat{\otimes}_{\mathbb{F}_p}\mathbb{F}_p \llbracket Z_1\rrbracket \cong \mathbb{F}_p \llbracket I^{\textnormal{sc}}\rrbracket \widehat{\otimes}_{\mathbb{F}_p}\mathbb{F}_p \llbracket X_1, X_2, \ldots, X_{d_Z[F:\bbQ_p]}\rrbracket \cong \mathbb{F}_p \llbracket I^{\textnormal{sc}}\rrbracket  \llbracket X_1, X_2, \ldots, X_{d_Z[F:\bbQ_p]}\rrbracket,$$
where $d_Z := \dim_F(\mathbf{Z})$ and the variables $X_i$ denote the elements $\lambda(1 + p[\xi]^r) - 1$ for $\lambda \in \cB$ and $0 \leq r \leq f - 1$ (see \cite[\S 20]{schneider:padicliegroups}).  

On the other hand, let $J_{Z,\fm}$ denote the two-sided ideal of $\gr_{\fm}(\mathbb{F}_p\llbracket I \rrbracket)$ generated by the images of $\lambda(1 + p[\xi]^r) - 1$ ($\lambda \in \cB, 0 \leq r \leq f - 1$) in $\fm/\fm^2$, and let $J_{Z,\omega}$ denote the two-sided ideal of $\gr_{\omega}(\mathbb{F}_p\llbracket I \rrbracket)$ generated by the images of the $\lambda(1 + p[\xi]^r) - 1$ in $\mathbb{F}_p\llbracket I \rrbracket_1/\mathbb{F}_p\llbracket I \rrbracket_{1^+}$.  Applying Proposition \ref{prop: ideal-comparison} to $I^{\textnormal{sc}}$, we then obtain an isomorphism of graded rings
$$\gr_{\fm}(\mathbb{F}_p\llbracket I \rrbracket)/J_{Z,\fm} \cong \gr_{\omega}(\mathbb{F}_p\llbracket I \rrbracket)/J_{Z,\omega}$$
(after rescaling the grading on the right-hand side by $h$).  
\end{rem}

\section{Relation to Gelfand--Kirillov dimension}

We now consider what the above results imply about Gelfand--Kirillov dimension of smooth representations of $G$.  We refer to \cite[\S 5.1]{BHHMS} for undefined notation and a summary of results.

We first recall the following result from \cite[Lem. 5.1.3]{BHHMS}, whose proof applies verbatim in our more general setting.

\begin{lem}
	\label{lem:GKdim}
	Let $\cG$ be an analytic pro-$p$-group without $p$-torsion, and assume that the graded ring $\gr_{\fm}(\bbF\llbracket \cG\rrbracket)$ is Auslander-regular.  Suppose $x_1, x_2, \ldots, x_m \in \gr_{\fm}(\bbF\llbracket \cG\rrbracket)$ is a sequence of elements satisfying the following conditions:
	\begin{itemize}
		\item $[x_i, \gr_{\fm}(\bbF\llbracket \cG\rrbracket)] \subset \langle x_1, x_2, \ldots, x_{i - 1}\rangle$ for all $1 \leq i \leq m$, the latter denoting the two-sided ideal generated by the given elements,
		\item $x_i$ is neither a left nor right zero-divisor on $\gr_{\fm}(\bbF\llbracket \cG\rrbracket)/\langle x_1, x_2, \ldots, x_{i - 1} \rangle$ for all $1 \leq i \leq m$, and
		\item $\gr_{\fm}(\bbF\llbracket \cG\rrbracket)/\langle x_1, x_2, \ldots, x_{m}\rangle$ is a nonzero commutative ring of finite global dimension, all of whose maximal ideals have the same height.
	\end{itemize}
	Suppose $M$ is a finitely generated $\bbF\llbracket \cG\rrbracket$-module such that $\gr_{\fm}(M)$ is annihilated by $\langle x_1, x_2, \ldots, x_{m}\rangle$. Then $\dim_{\cG}(M)$ is equal to the dimension of the support of $\gr_{\fm}(M)$ in $\textnormal{Spec}(\gr_{\fm}(\bbF\llbracket \cG\rrbracket)/\langle x_1, x_2, \ldots, x_{m}\rangle)$.
\end{lem}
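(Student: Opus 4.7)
The plan is to reduce the computation of $\dim_{\cG}(M)$ to a Krull-dimension computation on the commutative quotient $R/J$, where $R := \gr_{\fm}(\bbF\llbracket \cG\rrbracket)$ and $J := \langle x_1,\dots,x_m\rangle$. The conditions on the $x_i$ provide the bridge between the non-commutative Auslander theory on $R$ and ordinary commutative algebra on $R/J$.

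First I would pass from $\dim_{\cG}(M)$ to an invariant of $\gr_{\fm}(M)$. For an analytic pro-$p$-group $\cG$ without $p$-torsion, with $R$ Auslander-regular, the standard theory (Venjakob) gives $\dim_{\cG}(M) = d - j_{\bbF\llbracket \cG\rrbracket}(M) = d - j_R(\gr_{\fm}(M))$, where $d = \dim\cG$ and $j$ denotes the grade. Moreover, under the Auslander-regular hypothesis, the grade $j_R$ of a finitely generated $R$-module can be reinterpreted as the codimension of its support, so that $\dim_{\cG}(M)$ coincides with the dimension of the support of $\gr_{\fm}(M)$ in $\textnormal{Spec}(R)$ (despite $R$ being non-commutative, the Auslander hypothesis makes this comparison meaningful through the supports of the $\textnormal{Ext}^\bullet_R(\gr_{\fm}(M),R)$).

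Next I would exploit the sequence $x_1,\dots,x_m$ inductively. The first two bullets say exactly that each $x_i$ is a normal element (central modulo $\langle x_1,\dots,x_{i-1}\rangle$) and a non-zero-divisor on $R/\langle x_1,\dots,x_{i-1}\rangle$; these are the precise hypotheses under which quotienting by $x_i$ preserves Auslander-regularity and decreases grade of finitely generated modules by exactly $1$. Since $\gr_{\fm}(M)$ is annihilated by $J$, iterating this $m$ times identifies its grade over $R$ with its grade over $R/J$ shifted by $m$, and hence identifies the dimension of its support in $\textnormal{Spec}(R)$ with the dimension of its support in $\textnormal{Spec}(R/J)$. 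The third bullet then ensures that $R/J$ is commutative of finite global dimension with equidimensional maximal ideals, so that on $R/J$ the notions of Auslander codimension and Krull codimension of support agree, giving an honest commutative Krull dimension on the right-hand side.

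The hard part will be the first step, namely the bookkeeping that matches the several distinct flavors of dimension in play: the Iwasawa-theoretic $\dim_{\cG}$, the Auslander grade on the non-commutative ring $R$, and ordinary Krull dimension of support on commutative quotients. Each passage requires a good-filtration argument or an invocation of Auslander duality, and one must verify that $\gr_{\fm}(M)$ is itself a finitely generated $R$-module arising from a good filtration on $M$. Once these identifications are set up, the inductive reduction along the normalizing regular sequence to the commutative quotient is essentially formal, and the argument of \cite[Lem. 5.1.3]{BHHMS} applies unchanged.
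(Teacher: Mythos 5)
Your proposal is correct and matches the paper's approach: the paper gives no independent argument and simply cites \cite[Lem.\ 5.1.3]{BHHMS}, noting that the proof applies verbatim, which is exactly what you observe. Your outline of that proof — identifying $\dim_{\cG}(M)$ with $d - j_R(\gr_{\fm}(M))$ via Zariskian filtration theory, stepping down the grade one unit at a time along the normalizing regular sequence $x_1,\dots,x_m$, and then translating grade into Krull codimension of support on the commutative regular equidimensional quotient $R/J$ — is an accurate account of the BHHMS argument being invoked.
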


When $\cG = I$ is a pro-$p$ Iwahori subgroup in a $p$-adic reductive group $G$ (with assumptions as in previous sections), the ring $\gr_{\fm}(\bbF\llbracket I\rrbracket)$ is Auslander-regular (see \cite[pf. of Thm. 5.3.4]{BHHMS}).  We obtain the following consequence:

\begin{lem}
	\label{lem:GKbound}
	Suppose $\pi$ is an admissible smooth $G$-representation admitting a central character, and supppose $\gr_{\fm}(\pi^\vee)$ satisfies the conditions of Lemma \ref{lem:GKdim} for some sequence $x_1, x_2, \ldots, x_m \in \gr_{\fm}(\bbF\llbracket I\rrbracket)$.  Then $\dim_G(\pi) \leq [F:\bbQ_p](|\Delta| + 1)$.  
\end{lem}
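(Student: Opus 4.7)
The plan is to combine Lemma \ref{lem:GKdim} with our explicit description of the maximal commutative quotient of $\gr_{\fm}(\bbF\llbracket I\rrbracket)$, together with the constraints imposed on $\gr_{\fm}(\pi^\vee)$ by the central character.

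First, by Lemma \ref{lem:GKdim} applied to $M = \pi^\vee$, we have
\[
\dim_G(\pi) = \dim_I(\pi^\vee) = \dim\bigl(\textnormal{supp}(\gr_{\fm}(\pi^\vee))\bigr)
\]
inside $\textnormal{Spec}(R_m)$, where $R_m := \gr_{\fm}(\bbF\llbracket I\rrbracket)/\langle x_1,\ldots, x_m\rangle$. By the hypotheses of Lemma \ref{lem:GKdim}, the ring $R_m$ is commutative, so it is a quotient of the maximal commutative quotient of $\gr_{\fm}(\bbF\llbracket I\rrbracket)$.

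Next, I would exploit the central character hypothesis to cut down further. Since $Z_1$ is pro-$p$ and the character $\chi$ of $\pi$ takes values in $\bbF^\times$ (which has order coprime to $p$), the restriction $\chi|_{Z_1}$ is trivial. Hence each element $\lambda(1+p[\xi]^r) - 1$ for $\lambda \in \cB,\ 0 \leq r \leq f-1$ acts as zero on $\pi^\vee$, and consequently its image in $\gr_{\fm}(\bbF\llbracket I\rrbracket)$ annihilates $\gr_{\fm}(\pi^\vee)$. Let $J_{Z,\fm}$ denote the two-sided ideal they generate, as in Remark \ref{remark:nonss}. Then $\gr_{\fm}(\pi^\vee)$ is a module over $R_m/\overline{J_{Z,\fm}}$, and its support is contained in $\textnormal{Spec}(R_m/\overline{J_{Z,\fm}})$. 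Therefore
\[
\dim_G(\pi) \leq \dim\bigl(\gr_{\fm}(\bbF\llbracket I\rrbracket)/(C + J_{Z,\fm})\bigr),
\]
where $C$ denotes the two-sided commutator ideal of $\gr_{\fm}(\bbF\llbracket I\rrbracket)$.

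Finally, I would identify the right-hand side explicitly. By Remark \ref{remark:nonss}, $\gr_{\fm}(\bbF\llbracket I\rrbracket)/J_{Z,\fm} \cong \gr_{\omega}(\bbF\llbracket I\rrbracket)/J_{Z,\omega}$ up to rescaling of indices, and Corollary \ref{lazardcor} identifies $\gr_{\omega}(\bbF\llbracket I\rrbracket)$ with $U_{\bbF_p}(\overline{\gr(I)})\otimes_{\bbF_p}\bbF$. The same argument as in Proposition \ref{prop:largest-comm-quot}, applied after killing $J_{Z,\omega}$ (i.e., removing the $\cB$-generators), shows that the maximal commutative quotient is a polynomial ring over $\bbF$ in the variables $\overline{u_\gamma(p^{\delta_\gamma}[\xi]^r)}$ for $\gamma \in \Phi_1$ and $0 \leq r \leq f-1$. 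Its Krull dimension is thus $f\cdot|\Phi_1|$. Since the positive roots of height $\equiv 1\pmod h$ are exactly the simple roots and the unique negative root of height $\equiv 1 \pmod h$ is $-\theta$ (the negative of the highest root), we have $|\Phi_1| = |\Delta|+1$. Combined with $f = [F:\bbQ_p]$ (as $F/\bbQ_p$ is unramified), this yields
\[
\dim_G(\pi) \leq f\cdot(|\Delta|+1) = [F:\bbQ_p](|\Delta|+1),
\]
as desired. The main obstacle is bookkeeping the interplay between $\gr_{\fm}$ and $\gr_{\omega}$ in the non-semisimple case, which is precisely why Remark \ref{remark:nonss} and the central-character reduction are both needed.
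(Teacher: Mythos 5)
Your proposal is correct and follows essentially the same route as the paper's proof: apply Lemma \ref{lem:GKdim}, observe that the quotient $\gr_{\fm}(\bbF\llbracket I\rrbracket)/\langle x_1,\ldots,x_m\rangle$ is commutative and therefore factors through the maximal commutative quotient, use triviality of the central character on the pro-$p$ group $Z_1$ to cut out the $\cB$-variables via Remark \ref{remark:nonss}, and then count the remaining polynomial generators indexed by $\Phi_1$. Your explicit check that $|\Phi_1| = |\Delta| + 1$ (simple roots together with $-\theta$) makes explicit a step the paper states without comment, but the underlying argument is identical.
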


\begin{proof}
	Suppose $x_1, x_2, \ldots, x_m$ satisfy the conditions of Lemma \ref{lem:GKdim}.  In particular, this implies that $\gr_{\fm}(\bbF\llbracket I\rrbracket)/\langle x_1, x_2, \ldots, x_m\rangle$ is commutative, so the surjection 
	$$\gr_{\fm}(\bbF\llbracket I\rrbracket) \longtwoheadrightarrow \gr_{\fm}(\bbF\llbracket I\rrbracket)/\langle x_1, x_2, \ldots, x_m\rangle$$ 
	then factors through the largest commutative quotient of $\gr_{\fm}(\bbF\llbracket I\rrbracket)$.  By Propositions \ref{prop:largest-comm-quot} and \ref{prop: ideal-comparison} (see also Remark \ref{remark:nonss}) this quotient is a polynomial algebra in $[F:\bbQ_p](|\Phi_1| + d_Z) = [F:\bbQ_p](|\Delta| + 1 + d_Z)$ variables, where $d_Z := \dim_F(\mathbf{Z})$.  More precisely, the assumption $p > h + 1$ guarantees that $T_1 \cong Z_1 \times \langle \alpha^\vee(1 + \fm_F)\rangle_{\alpha \in \Phi}$ (see Lemma \ref{lem:boundconsq's}), and the assumption that $\pi$ possesses a central character implies $Z_1$ acts trivially on $\pi$.  Hence, if $z\in Z_1$, the element $z - 1 \in \gr_{\fm}(\bbF\llbracket I\rrbracket)$ acts by zero on $\gr_{\fm}(\pi^\vee)$, and by Lemma \ref{lem:GKdim} we obtain
	$$\dim_G(\pi)  = \dim\left(\textnormal{Supp}_{\textnormal{Spec}(\gr_{\fm}(\bbF\llbracket I\rrbracket)/\langle x_1, x_2, \ldots, x_{m}\rangle)}(\gr_{\fm}(\pi^\vee))\right)  \leq  [F:\bbQ_p](|\Delta| + 1).$$
\end{proof}

We would like to use the above lemmas in order to understand the Gelfand--Kirillov dimension of representations coming from global settings.  However, for general root systems, the bound of Lemma \ref{lem:GKbound} is in some sense incompatible with expectations coming from completed cohomology.  We explain this incompatibility below.

Suppose for simplicity that $\mathbf{G}$ is semisimple, and let $\bbG$ denote a connected reductive group over $\bbQ$ which is compact at infinity, and which satisfies $\bbG(\bbQ_p) \cong G$.  For a compact open subgroup $K = \prod_{\ell < \infty} K_{\ell} \subset \prod_{\ell < \infty} \bbG(\bbQ_\ell)$, we then consider the (finite-dimensional) space of functions
$$S_{\bbG}(K,\bbF) := \{f:\bbG(\bbQ)\backslash \bbG(\bbA_{\bbQ})/ \bbG(\bbR)^\circ  K \longrightarrow \bbF\}$$
(compare \cite[\S 4, Eqn. (4.2)]{gross:algmodforms}) and set
$$S_{\bbG}(K^p,\bbF) := \varinjlim_{K_p \subset \bbG(\bbQ_p)}S_{\bbG}(K^pK_p,\bbF) = \{f:\bbG(\bbQ)\backslash \bbG(\bbA_{\bbQ})/ \bbG(\bbR)^\circ  K^p \longrightarrow \bbF~\textnormal{locally constant}\}.$$
Here, $K^p = \prod_{p \neq \ell < \infty}K_\ell$, and the direct limit runs over the compact open subgroups of $\bbG(\bbQ_p)$.  The latter space is equipped with an action of $\bbG(\bbQ_p) \cong G$, and gives a smooth admissible representation.

Suppose now that $\pi$ is a smooth representation of $G$ which arises as a subquotient of $S_{\bbG}(K^p,\bbF)$ for some $\bbG$ and $K^p$ as above.  It is not unreasonable to guess that such a $\pi$ should satisfy $\dim_G(\pi) = [F:\bbQ_p]\dim(\sB) = [F:\bbQ_p]\cdot |\Phi^-|$, where $\sB$ denotes the flag variety of $\mathbf{G}$.  (See, for example, \cite[\S 3.1.1]{emerton:icm}.)  However, under the assumption that $\gr_{\fm}(\pi^\vee)$ satisfies the conditions of Lemma \ref{lem:GKdim} and $\Phi$ has rank greater than 1 and is not of type $\mathsf{A}_2$,  Lemma \ref{lem:GKbound} produces the bound 
$$\dim_G(\pi) \leq [F:\bbQ_p](|\Delta| + 1) < [F:\bbQ_p]\cdot |\Phi^-|,$$
which conflicts with these global expectations.  It is therefore unlikely that the strategy of Lemma \ref{lem:GKdim} can be used to confirm the expected bounds on $\dim_G(\pi)$.  In particular, in order to obtain the expected bounds, one is forced to work with non-commutative quotients of $\gr_{\mathfrak{m}}(\mathbb{F}\llbracket I\rrbracket)$.

\bibliographystyle{amsalpha}
\bibliography{refs}

@article {BT2,
    AUTHOR = {Bruhat, F. and Tits, J.},
     TITLE = {Groupes r\'{e}ductifs sur un corps local. {II}. {S}ch\'{e}mas en
              groupes. {E}xistence d'une donn\'{e}e radicielle valu\'{e}e},
   JOURNAL = {Inst. Hautes \'{E}tudes Sci. Publ. Math.},
  FJOURNAL = {Institut des Hautes \'{E}tudes Scientifiques. Publications
              Math\'{e}matiques},
    NUMBER = {60},
      YEAR = {1984},
     PAGES = {197--376},
      ISSN = {0073-8301},
   MRCLASS = {20G25 (14L15)},
  MRNUMBER = {756316},
MRREVIEWER = {James E. Humphreys},
       URL = {http://www.numdam.org/item?id=PMIHES_1984__60__5_0},
}

@book {bourbaki:lie,
    AUTHOR = {Bourbaki, Nicolas},
     TITLE = {\'{E}l\'{e}ments de math\'{e}matique},
      NOTE = {Groupes et alg\`ebres de Lie. Chapitres 4, 5 et 6. [Lie groups
              and Lie algebras. Chapters 4, 5 and 6]},
 PUBLISHER = {Masson, Paris},
      YEAR = {1981},
     PAGES = {290},
      ISBN = {2-225-76076-4},
   MRCLASS = {17-02 (00A05)},
  MRNUMBER = {647314},
}

@article {BP,
    AUTHOR = {Breuil, Christophe and Pa\v{s}k\={u}nas, Vytautas},
     TITLE = {Towards a modulo {$p$} {L}anglands correspondence for {${\rm
              GL}_2$}},
   JOURNAL = {Mem. Amer. Math. Soc.},
  FJOURNAL = {Memoirs of the American Mathematical Society},
    VOLUME = {216},
      YEAR = {2012},
    NUMBER = {1016},
     PAGES = {vi+114},
      ISSN = {0065-9266},
      ISBN = {978-0-8218-5227-9},
   MRCLASS = {11F70 (11F80 22E50)},
  MRNUMBER = {2931521},
MRREVIEWER = {Gabor Wiese},
       DOI = {10.1090/S0065-9266-2011-00623-4},
       URL = {https://doi.org/10.1090/S0065-9266-2011-00623-4},
}

@incollection {berger:bourbaki,
    AUTHOR = {Berger, Laurent},
     TITLE = {La correspondance de {L}anglands locale {$p$}-adique pour
              {${\rm GL}_2({\bf Q}_p)$}},
      NOTE = {S\'{e}minaire Bourbaki. Vol. 2009/2010. Expos\'{e}s 1012--1026},
   JOURNAL = {Ast\'{e}risque},
  FJOURNAL = {Ast\'{e}risque},
    NUMBER = {339},
      YEAR = {2011},
     PAGES = {Exp. No. 1017, viii, 157--180},
      ISSN = {0303-1179},
      ISBN = {978-2-85629-326-3},
   MRCLASS = {11S37 (11F70 11F85 22E50)},
  MRNUMBER = {2906353},
MRREVIEWER = {B. Sury},
}

@incollection {breuil:bourbaki,
    AUTHOR = {Breuil, Christophe},
     TITLE = {Correspondance de {L}anglands {$p$}-adique, compatibilit\'{e}
              local-global et applications [d'apr\`es {C}olmez, {E}merton,
              {K}isin, {$\ldots$}]},
      NOTE = {S\'{e}minaire Bourbaki: Vol. 2010/2011. Expos\'{e}s 1027--1042},
   JOURNAL = {Ast\'{e}risque},
  FJOURNAL = {Ast\'{e}risque},
    NUMBER = {348},
      YEAR = {2012},
     PAGES = {Exp. No. 1031, viii, 119--147},
      ISSN = {0303-1179},
      ISBN = {978-2-85629-351-5},
   MRCLASS = {11S37 (11F70 22E50)},
  MRNUMBER = {3050714},
MRREVIEWER = {Volker J. Heiermann},
}

@inproceedings {breuil:ICM,
    AUTHOR = {Breuil, Christophe},
     TITLE = {The emerging {$p$}-adic {L}anglands programme},
 BOOKTITLE = {Proceedings of the {I}nternational {C}ongress of
              {M}athematicians. {V}olume {II}},
     PAGES = {203--230},
 PUBLISHER = {Hindustan Book Agency, New Delhi},
      YEAR = {2010},
   MRCLASS = {22E50 (11F70 11F80)},
  MRNUMBER = {2827792},
MRREVIEWER = {Benjamin Schraen},
}

@article {BHHMS,
    AUTHOR = {Breuil, Christophe and Herzig, Florian and Hu, Yongquan and
              Morra, Stefano and Schraen, Benjamin},
     TITLE = {Gelfand-{K}irillov dimension and {${\rm mod}\, p$} cohomology
              for {$\rm GL_2$}},
   JOURNAL = {Invent. Math.},
  FJOURNAL = {Inventiones Mathematicae},
    VOLUME = {234},
      YEAR = {2023},
    NUMBER = {1},
     PAGES = {1--128},
      ISSN = {0020-9910},
   MRCLASS = {11F80 (11R52)},
  MRNUMBER = {4635831},
MRREVIEWER = {Yiwen Ding},
       DOI = {10.1007/s00222-023-01202-8},
       URL = {https://doi.org/10.1007/s00222-023-01202-8},
}

@incollection {conrad:nonsplit,
    AUTHOR = {Conrad, Brian},
     TITLE = {Non-split reductive groups over {${\bf Z}$}},
 BOOKTITLE = {Autours des sch\'{e}mas en groupes. {V}ol. {II}},
    SERIES = {Panor. Synth\`eses},
    VOLUME = {46},
     PAGES = {193--253},
 PUBLISHER = {Soc. Math. France, Paris},
      YEAR = {2015},
   MRCLASS = {14L15 (14G15 14G35 14L35 20G30)},
  MRNUMBER = {3525597},
MRREVIEWER = {Boris \`E. Kunyavski\u{\i}},
       DOI = {10.1017/CBO9781316092439},
       URL = {https://doi.org/10.1017/CBO9781316092439},
}

@incollection {conrad:reductive,
    AUTHOR = {Conrad, Brian},
     TITLE = {Reductive group schemes},
 BOOKTITLE = {Autour des sch\'{e}mas en groupes. {V}ol. {I}},
    SERIES = {Panor. Synth\`eses},
    VOLUME = {42/43},
     PAGES = {93--444},
 PUBLISHER = {Soc. Math. France, Paris},
      YEAR = {2014},
   MRCLASS = {14L15},
  MRNUMBER = {3362641},
}

@article {DottoLe,
    AUTHOR = {Dotto, Andrea and Le, Daniel},
     TITLE = {Diagrams in the {$\textrm{mod}~ p$} cohomology of {S}himura
              curves},
   JOURNAL = {Compos. Math.},
  FJOURNAL = {Compositio Mathematica},
    VOLUME = {157},
      YEAR = {2021},
    NUMBER = {8},
     PAGES = {1653--1723},
      ISSN = {0010-437X},
   MRCLASS = {11F80 (11S37 22E50)},
  MRNUMBER = {4283560},
MRREVIEWER = {Neil P. Dummigan},
       DOI = {10.1112/s0010437x21007375},
       URL = {https://doi.org/10.1112/s0010437x21007375},
}

@misc{DottoLeHung,
      title={Cohomology of $p$-adic {C}hevalley groups}, 
      author={Andrea Dotto and Bao V. Le Hung},
      year={2025},
      eprint={2507.13500},
      archivePrefix={arXiv},
      primaryClass={math.NT},
      url={https://arxiv.org/abs/2507.13500}, 
      note={\url{https://arxiv.org/abs/2507.13500}},
}

@inproceedings {emerton:icm,
    AUTHOR = {Emerton, Matthew},
     TITLE = {Completed cohomology and the {$p$}-adic {L}anglands program},
 BOOKTITLE = {Proceedings of the {I}nternational {C}ongress of
              {M}athematicians---{S}eoul 2014. {V}ol. {II}},
     PAGES = {319--342},
 PUBLISHER = {Kyung Moon Sa, Seoul},
      YEAR = {2014},
   MRCLASS = {11F70 (22D12)},
  MRNUMBER = {3728617},
MRREVIEWER = {Ivan Mati\'{c}},
}

@article {EGS,
    AUTHOR = {Emerton, Matthew and Gee, Toby and Savitt, David},
     TITLE = {Lattices in the cohomology of {S}himura curves},
   JOURNAL = {Invent. Math.},
  FJOURNAL = {Inventiones Mathematicae},
    VOLUME = {200},
      YEAR = {2015},
    NUMBER = {1},
     PAGES = {1--96},
      ISSN = {0020-9910},
   MRCLASS = {11F80 (14G35 22E50)},
  MRNUMBER = {3323575},
MRREVIEWER = {Peter Bruin},
       DOI = {10.1007/s00222-014-0517-0},
       URL = {https://doi.org/10.1007/s00222-014-0517-0},
}

@article {gross:algmodforms,
    AUTHOR = {Gross, Benedict H.},
     TITLE = {Algebraic modular forms},
   JOURNAL = {Israel J. Math.},
  FJOURNAL = {Israel Journal of Mathematics},
    VOLUME = {113},
      YEAR = {1999},
     PAGES = {61--93},
      ISSN = {0021-2172},
   MRCLASS = {11F55 (11F80 20G30 22E55)},
  MRNUMBER = {1729443},
MRREVIEWER = {Stefan K\"{u}hnlein},
       DOI = {10.1007/BF02780173},
       URL = {https://doi.org/10.1007/BF02780173},
}

@article {GLS,
    AUTHOR = {Ghate, Eknath and Le, Daniel and Sheth, Mihir},
     TITLE = {Non-admissible irreducible representations of {$p$}-adic
              {${\rm GL}_n$} in characteristic {$p$}},
   JOURNAL = {Represent. Theory},
  FJOURNAL = {Representation Theory. An Electronic Journal of the American
              Mathematical Society},
    VOLUME = {27},
      YEAR = {2023},
     PAGES = {1088--1101},
   MRCLASS = {22E50 (11S37)},
  MRNUMBER = {4664337},
MRREVIEWER = {Peter Dillery},
       DOI = {10.1090/ert/660},
       URL = {https://doi.org/10.1090/ert/660},
}

@article {HuWang,
    AUTHOR = {Hu, Yongquan and Wang, Haoran},
     TITLE = {Multiplicity one for the {$\textrm{mod}\, p$} cohomology of
              {S}himura curves: the tame case},
   JOURNAL = {Math. Res. Lett.},
  FJOURNAL = {Mathematical Research Letters},
    VOLUME = {25},
      YEAR = {2018},
    NUMBER = {3},
     PAGES = {843--873},
      ISSN = {1073-2780},
   MRCLASS = {11F70},
  MRNUMBER = {3847337},
MRREVIEWER = {Zhengyu Mao},
       DOI = {10.4310/MRL.2018.v25.n3.a6},
       URL = {https://doi.org/10.4310/MRL.2018.v25.n3.a6},
}

@article {HuWang2,
    AUTHOR = {Hu, Yongquan and Wang, Haoran},
     TITLE = {On the {$\bmod p$} cohomology for {$\rm GL_2$}: the
              non-semisimple case},
   JOURNAL = {Camb. J. Math.},
  FJOURNAL = {Cambridge Journal of Mathematics},
    VOLUME = {10},
      YEAR = {2022},
    NUMBER = {2},
     PAGES = {261--431},
      ISSN = {2168-0930},
   MRCLASS = {11F70 (22E50)},
  MRNUMBER = {4461834},
MRREVIEWER = {Yiwen Ding},
       DOI = {10.4310/cjm.2022.v10.n2.a1},
       URL = {https://doi.org/10.4310/cjm.2022.v10.n2.a1},
}

@article {lahirisorensen,
    AUTHOR = {Lahiri, Aranya and Sorensen, Claus},
     TITLE = {Rigid vectors in {$p$}-adic principal series representations},
   JOURNAL = {Israel J. Math.},
  FJOURNAL = {Israel Journal of Mathematics},
    VOLUME = {259},
      YEAR = {2024},
    NUMBER = {1},
     PAGES = {427--459},
      ISSN = {0021-2172},
   MRCLASS = {22E50},
  MRNUMBER = {4732375},
       DOI = {10.1007/s11856-023-2495-7},
       URL = {https://doi.org/10.1007/s11856-023-2495-7},
}

@article {lazard,
    AUTHOR = {Lazard, Michel},
     TITLE = {Groupes analytiques {$p$}-adiques},
   JOURNAL = {Inst. Hautes \'{E}tudes Sci. Publ. Math.},
  FJOURNAL = {Institut des Hautes \'{E}tudes Scientifiques. Publications
              Math\'{e}matiques},
    NUMBER = {26},
      YEAR = {1965},
     PAGES = {389--603},
      ISSN = {0073-8301},
   MRCLASS = {14.50},
  MRNUMBER = {209286},
       URL = {http://www.numdam.org/item?id=PMIHES_1965__26__389_0},
}

@article {Le:wild,
    AUTHOR = {Le, Daniel},
     TITLE = {Multiplicity one for wildly ramified representations},
   JOURNAL = {Algebra Number Theory},
  FJOURNAL = {Algebra \& Number Theory},
    VOLUME = {13},
      YEAR = {2019},
    NUMBER = {8},
     PAGES = {1807--1827},
      ISSN = {1937-0652},
   MRCLASS = {11R39 (11F80)},
  MRNUMBER = {4017535},
MRREVIEWER = {Atsushi Yamagami},
       DOI = {10.2140/ant.2019.13.1807},
       URL = {https://doi.org/10.2140/ant.2019.13.1807},
}

@article {LMS,
    AUTHOR = {Le, Daniel and Morra, Stefano and Schraen, Benjamin},
     TITLE = {Multiplicity one at full congruence level},
   JOURNAL = {J. Inst. Math. Jussieu},
  FJOURNAL = {Journal of the Institute of Mathematics of Jussieu. JIMJ.
              Journal de l'Institut de Math\'{e}matiques de Jussieu},
    VOLUME = {21},
      YEAR = {2022},
    NUMBER = {2},
     PAGES = {637--658},
      ISSN = {1474-7480},
   MRCLASS = {11F33 (11F80 20C33)},
  MRNUMBER = {4386824},
MRREVIEWER = {Karam Deo Shankhadhar},
       DOI = {10.1017/S1474748020000225},
       URL = {https://doi.org/10.1017/S1474748020000225},
}

@MISC {root-lemma-mo,
    TITLE = {Why is the root poset is graded by height?},
    AUTHOR = {McNamara, Peter},
    HOWPUBLISHED = {MathOverflow},
    NOTE = {Answer URL:\url{https://mathoverflow.net/q/283350} (version: 2017-10-12), User URL:\url{https://mathoverflow.net/users/425/peter-mcnamara}},
    URL = {https://mathoverflow.net/q/283350}
}

@book {platonovrapinchuk,
    AUTHOR = {Platonov, Vladimir and Rapinchuk, Andrei},
     TITLE = {Algebraic groups and number theory},
    SERIES = {Pure and Applied Mathematics},
    VOLUME = {139},
      NOTE = {Translated from the 1991 Russian original by Rachel Rowen},
 PUBLISHER = {Academic Press, Inc., Boston, MA},
      YEAR = {1994},
     PAGES = {xii+614},
      ISBN = {0-12-558180-7},
   MRCLASS = {11E57 (11-02 20Gxx)},
  MRNUMBER = {1278263},
}

@article {schneiderstuhler,
    AUTHOR = {Schneider, Peter and Stuhler, Ulrich},
     TITLE = {Representation theory and sheaves on the {B}ruhat-{T}its
              building},
   JOURNAL = {Inst. Hautes \'{E}tudes Sci. Publ. Math.},
  FJOURNAL = {Institut des Hautes \'{E}tudes Scientifiques. Publications
              Math\'{e}matiques},
    NUMBER = {85},
      YEAR = {1997},
     PAGES = {97--191},
      ISSN = {0073-8301},
   MRCLASS = {22E50 (11F70 20G25)},
  MRNUMBER = {1471867},
MRREVIEWER = {Ernst-Wilhelm Zink},
       URL = {http://www.numdam.org/item?id=PMIHES_1997__85__97_0},
}

@book {schneider:padicliegroups,
    AUTHOR = {Schneider, Peter},
     TITLE = {{$p$}-adic {L}ie groups},
    SERIES = {Grundlehren der mathematischen Wissenschaften [Fundamental
              Principles of Mathematical Sciences]},
    VOLUME = {344},
 PUBLISHER = {Springer, Heidelberg},
      YEAR = {2011},
     PAGES = {xii+254},
      ISBN = {978-3-642-21146-1},
   MRCLASS = {22E20 (16S34 22E35)},
  MRNUMBER = {2810332},
MRREVIEWER = {Dubravka Ban},
       DOI = {10.1007/978-3-642-21147-8},
       URL = {https://doi.org/10.1007/978-3-642-21147-8},
}

@article {schraen:finpres,
    AUTHOR = {Schraen, Benjamin},
     TITLE = {Sur la pr\'{e}sentation des repr\'{e}sentations supersinguli\`eres de
              {${\rm GL}_2(F)$}},
   JOURNAL = {J. Reine Angew. Math.},
  FJOURNAL = {Journal f\"{u}r die Reine und Angewandte Mathematik. [Crelle's
              Journal]},
    VOLUME = {704},
      YEAR = {2015},
     PAGES = {187--208},
      ISSN = {0075-4102},
   MRCLASS = {22E50 (11F33 11S37)},
  MRNUMBER = {3365778},
MRREVIEWER = {Gabor Wiese},
       DOI = {10.1515/crelle-2013-0049},
       URL = {https://doi.org/10.1515/crelle-2013-0049},
}

@book {SGA3,
    AUTHOR = {Artin, M. and Bertin, J. E. and Demazure, M. and Gabriel, P.
              and Grothendieck, A. and Raynaud, M. and Serre, J.-P.},
     TITLE = {Sch\'{e}mas en groupes. {F}asc. 7: {E}xpos\'{e}s 23 \`a 26},
      NOTE = {Premi\`ere \'{e}dition,
              S\'{e}minaire de G\'{e}om\'{e}trie Alg\'{e}brique de l'Institut des Hautes
              \'{E}tudes Scientifiques, 1963/64, dirig\'{e} par Michel Demazure et
              Alexander Grothendieck},
 PUBLISHER = {Institut des Hautes \'{E}tudes Scientifiques, Paris},
      YEAR = {1965},
     PAGES = {ii+260 pp. (not consecutively paged)},
   MRCLASS = {14.50},
  MRNUMBER = {207710},
}

@article {totaro,
    AUTHOR = {Totaro, Burt},
     TITLE = {Euler characteristics for {$p$}-adic {L}ie groups},
   JOURNAL = {Inst. Hautes \'{E}tudes Sci. Publ. Math.},
  FJOURNAL = {Institut des Hautes \'{E}tudes Scientifiques. Publications
              Math\'{e}matiques},
    NUMBER = {90},
      YEAR = {1999},
     PAGES = {169--225 (2001)},
      ISSN = {0073-8301},
   MRCLASS = {22E50 (11S85 17B45 22E35)},
  MRNUMBER = {1813226},
MRREVIEWER = {Nguy\cftil{e}n Qu\^{o}c Th\'{a}ng},
       URL = {http://www.numdam.org/item?id=PMIHES_1999__90__169_0},
}

@article {wu:finpres,
    AUTHOR = {Wu, Zhixiang},
     TITLE = {A note on presentations of supersingular representations of
              {$\textrm{GL}_2(F)$}},
   JOURNAL = {Manuscripta Math.},
  FJOURNAL = {Manuscripta Mathematica},
    VOLUME = {165},
      YEAR = {2021},
    NUMBER = {3-4},
     PAGES = {583--596},
      ISSN = {0025-2611},
   MRCLASS = {22E50 (11F70)},
  MRNUMBER = {4280498},
MRREVIEWER = {Zhe Chen},
       DOI = {10.1007/s00229-020-01224-z},
       URL = {https://doi.org/10.1007/s00229-020-01224-z},
}

\end{document}